\documentclass[11pt]{amsart}
\usepackage[T1]{fontenc}
\usepackage{newtxtext}

\usepackage[a4paper,margin=27mm,headheight=14pt]{geometry}
\usepackage{amsmath,amssymb,amsthm,mathtools}
\usepackage{newtxmath}
\usepackage{microtype}
\usepackage{needspace}
\usepackage[unicode=true,colorlinks=true,linkcolor=black,citecolor=black,urlcolor=black]{hyperref}
\newcommand{\R}{\mathbb R}
\newcommand{\dd}{\,\mathrm d}

\newcommand{\intr}{\operatorname{int}}
\newcommand{\dist}{\operatorname{dist}}
\newcommand{\rec}{\operatorname{rec}}

\newcommand{\ip}[2]{\langle #1,#2\rangle}
\numberwithin{equation}{section}
\allowdisplaybreaks[2]
\date{}

\newtheorem{theorem}{Theorem}[section]
\newtheorem{lemma}[theorem]{Lemma}
\newtheorem{proposition}[theorem]{Proposition}
\newtheorem{corollary}[theorem]{Corollary}
\theoremstyle{remark}
\newtheorem{remark}[theorem]{Remark}

\title[Ehrhard's inequality and sharp lower deviations]{From Cauchy to Gaussian: Sharp Lower Deviations and \\Ehrhard-Type Inequality}
\makeatletter
\@ifundefined{subjclassname@2020}{\@namedef{subjclassname@2020}{\textup{2020} Mathematics Subject Classification}}{}
\makeatother
\subjclass[2020]{Primary 60D05; Secondary 52A40, 60E07}
\keywords{Cauchy measure, Ehrhard's inequality, fiber rearrangement, optimal lower deviations}
\author[Y. Huang]{Yong Huang}
\address{Institute of Mathematics,
	Hunan University,  Changsha,  410082,  China}
\email{
	huangyong@hnu.edu.cn}

\author[S. Tang]{Shengyu Tang}
\address{Institute of Mathematics,
	Hunan University,  Changsha,  410082,  China}
\email{
	tsy@hnu.edu.cn}

\author[S. Wang]{Sinan Wang}
\address{Institute of Mathematics,
	Hunan University,  Changsha,  410082,  China}
\email{
	wangsinan@hnu.edu.cn}
\begin{document}
	\begin{abstract}
		We establish dimension-free, pointwise optimal lower-deviation bounds for convex functions under spherically symmetric Cauchy-type measures. These bounds use either mean–variance normalization or moments about the median, the latter covering the Cauchy case where the mean and variance are unavailable. By approximation with a parametrized family the Cauchy-type measures, our results recover the Gaussian setting of Paouris and Valettas \cite{PaourisValettas18} and strengthen their variance-sensitive estimate to a sharp bound, with a complete equality characterization. This goes beyond the partial improvements obtained in subsequent work.
		
		We also prove an Ehrhard-type inequality for arbitrary convex sets under these Cauchy-type measures in every dimension and characterize all equality cases. This resolves a conjecture posed by Byczkowski,
		Ma\l ecki and \.{Z}ak \cite{ByczkowskiMaleckiZak22}. The proof follows from a variational argument combined with the joint concavity of fiber rearrangement. The geometric inequality reduces distributions of convex observables to one-dimensional convex transforms, yielding the sharp lower-deviation bounds.
	\end{abstract}
	\maketitle
	\section{Introduction}
	
	The spherical Cauchy distribution is a fundamental model in
	heavy-tailed probability. Its characteristic function is
	$e^{-|\xi|}$, so it is the time-one distribution of the rotationally
	invariant symmetric $1$-stable process. Its density is also the
	Poisson kernel of the upper half-space and describes the boundary
	hitting location of Brownian motion started at height one.
	These representations connect the Cauchy measure with stable
	processes, harmonic measure, and the nonlocal semigroup generated
	by $-(-\Delta)^{1/2}$, studied by Caffarelli and Silvestre \cite{CaffarelliSilvestre07}. These connections and their geometric
	consequences were studied systematically by
	Ba\~nuelos and Kulczycki \cite{BanuelosKulczycki04}.
	
	In high-dimensional probability, Cauchy-type measures also form a fundamental class of heavy-tailed convex measures. Borell's theory of concave measures places Gaussian distributions and distributions with polynomial tails within a common framework, relating the interpolation properties of sets to the integrability of functions \cite{Borell74}. Bobkov subsequently developed large-deviation, small-deviation, and isoperimetric estimates for heavy-tailed convex measures \cite{Bobkov07}. Bobkov and Ledoux established weighted functional inequalities connecting moments and deviation probabilities for Cauchy-type measures \cite{BobkovLedoux09}. In the heavy-tailed setting, the tail parameter remains essential, and the appropriate normalization may differ substantially from the variance scale familiar in the Gaussian setting.
	
	This paper is concerned with lower deviations of convex observables. Natural examples include norms, maximum of linear forms, distances to convex sets, and convex loss functions. Our aim is to derive the optimal lower-tail bound in terms of the reference distribution and the dispersion of the observable itself. Such a normalization reflects the actual fluctuations of the random variable and allows for convex functions that are not globally Lipschitz.
	
	This problem has important precedents in Gaussian probability. Classical Gaussian concentration asserts that, if
	$G_n\sim N(0,I_n)$ and $f:\mathbb R^n\to\mathbb R$ is $L$-Lipschitz,
	then
	\begin{equation}\label{intro-classical-concentration}
		\mathbb P\!\left(
		|f(G_n)-\mathbb Ef(G_n)|\ge tL
		\right)
		\le 2e^{-t^2/2},
		\qquad t>0.
	\end{equation}
	The geometric foundation is the Gaussian isoperimetric inequality
	of Sudakov--Tsirelson and Borell
	\cite{SudakovTsirelson74,BorellGaussian75}; see also \cite{Ledoux01}.
	The Lipschitz constant, however, may substantially overestimate the
	standard deviation. This raises the question of whether additional
	convexity permits deviation estimates on the scale of the actual
	fluctuations.
	
	Paouris and Valettas \cite{PaourisValettas18} answered this question
	for lower deviations. In the following Gaussian comparisons, write
	$Y=f(G_n)$, where $f$ is convex and
	$0<\operatorname{Var}Y<\infty$. Their estimate gives
	\begin{equation}\label{pv-background}
		\mathbb P(Y\le\mathbb EY-t\sqrt{\operatorname{Var}Y})
		\le
		\frac12\exp\!\left(-\frac{\pi(t-1)^2}{1024}\right),
		\qquad t>1.
	\end{equation}
	No Lipschitz assumption is required. Their argument uses Ehrhard's
	inequality to convert the geometry of convex sublevel sets into
	concavity of a transformed distribution function.
	
	Valettas \cite[Theorem~2.15]{Valettas17} subsequently obtained a sharp
	estimate under normalization by a median and a positive first
	moment. If $m$ is a median of $Y$, then
	\begin{equation}\label{intro-valettas-positive}
		\mathbb P\!\left(
		Y<m-s\,\mathbb E(Y-m)_+
		\right)
		\le
		\Phi\!\left(-\frac{s}{\sqrt{2\pi}}\right),
		\qquad s>0,
	\end{equation}
	where $x_+=\max\{x,0\}$ and $\Phi$ is the standard Gaussian
	distribution function. The elementary median estimates
	\[
	\mathbb E(Y-m)_+\le\sqrt{\operatorname{Var}Y},
	\qquad
	m\ge\mathbb EY-\sqrt{\operatorname{Var}Y}
	\]
	yield
	\begin{equation}\label{intro-valettas-variance}
		\mathbb P(Y\le\mathbb EY-t\sqrt{\operatorname{Var}Y})
		\le
		\Phi\!\left(-\frac{t-1}{\sqrt{2\pi}}\right),
		\qquad t>1.
	\end{equation}
	For every $t>1$,
	\eqref{intro-valettas-variance} strictly improves the explicit
	bound \eqref{pv-background}. More recently, Chen \cite{Chen26} used a convexity property of
	Gaussian logarithmic moment generating functions to prove
	\begin{equation}\label{intro-chen}
		\mathbb P(Y\le\mathbb EY-t\sqrt{\operatorname{Var}Y})
		\le e^{-t^2/2},
		\qquad t>0.
	\end{equation}
	This improves the quadratic exponential rate to $1/2$ in the
	large-deviation regime. It is stronger than
	\eqref{intro-valettas-variance} for sufficiently large $t$, although
	the two bounds are not ordered in the same direction throughout
	$t>1$. Together, these results show that convexity permits
	variance-controlled Gaussian lower deviations without global Lipschitz regularity.
	
	Lower-deviation estimates also arise in more specific probabilistic problems. Paouris, Tikhomirov, and Valettas \cite{PaourisTikhomirovValettas22} studied small-ball probabilities for norms through parameters that involve variance, derivatives, and linear structure. Lopes and Yao \cite{LopesYao22} employed lower-tail estimates for Gaussian maxima in high-dimensional bootstrap methods. These applications illustrate the role played by lower deviations in the analysis of high-dimensional random structures, random maxima, and statistical approximation.
	
	We develop sharp lower-deviation comparisons for spherical
	Cauchy-type measures.
	For $\nu>0$, define
	\begin{equation}
		\label{student}
		\begin{cases}
			d\mu_{n,\nu}(x)&=\rho_{n,\nu}(x)\,dx,\\
			\rho_{n,\nu}(x)
			&=
			\frac{\Gamma((n+\nu)/2)}
			{\Gamma(\nu/2)(\nu\pi)^{n/2}}
			\left(1+\frac{|x|^2}{\nu}\right)^{-(n+\nu)/2}.
		\end{cases}
	\end{equation}
	For finite $\nu$ and $n\ge2$, the common random scale makes
	$\mu_{n,\nu}$ a non-product measure. Let $N(0,I_n)$ is the standard Gaussian distribution, and $\chi^2_\nu$ is the chi-square distribution with $\nu$ degree of freedom. If $G_n\sim N(0,I_n)$ and $W_\nu\sim\chi_\nu^2$ are independent,
	then
	\begin{equation}\label{mixture}
		\sqrt{\frac{\nu}{W_\nu}}\,G_n\sim\mu_{n,\nu}.
	\end{equation}
	The choice $\nu=1$ gives the rotationally invariant Cauchy measure,
	whereas $\nu\to\infty$ gives standard Gaussian measure.
	Write
	\begin{equation}\label{cdf}
		T_\nu(t)=\mu_{1,\nu}((-\infty,t]),
		\qquad
		\varphi_\nu=T_\nu'.
	\end{equation}
	For fixed $n$,
	\begin{equation}\label{intro-student-limit}
		\mu_{n,\nu}\longrightarrow\gamma_n
		\quad\text{in total variation},
		\qquad
		T_\nu^{-1}(p)\longrightarrow\Phi^{-1}(p),
		\quad 0<p<1.
	\end{equation}
	The family therefore provides a natural passage from Cauchy to
	Gaussian probability. Gaussian measure will arise as a limiting
	case of both our geometric and probability inequalities.
	
	Our first probability result determines the exact lower-tail curve under mean--variance normalization.
	For spherical Cauchy-type laws, a dimension-free statement must also retain the tail parameter. The reason is twofold: the distribution of a convex observable need not be a Cauchy distribution, and averaging conditional Gaussian estimates in \eqref{mixture} does not preserve the variance of the mixture. In view of this, instead of working only with the variance, we determine the whole distribution class directly. This turns the lower-deviation problem into a scalar extremal problem, and the resulting comparison identifies both the best probability bound and the extremal law. In addition, we also obtain stability results.
	
	
	For probability laws $\eta,\zeta$ on $\R$ with finite second moments, the quadratic Wasserstein distance is defined by
	\[
	W_2(\eta,\zeta)=\inf_{\pi\in\Pi(\eta,\zeta)}
	\left(\int_{\R^2}|x-y|^2\,\dd\pi(x,y)\right)^{1/2},
	\]
	where $\Pi(\eta,\zeta)$ is the set of probability measures with marginals $\eta$ and $\zeta$. We write $\mathcal L(Y)$ for the law of $Y$, and throughout the probabilistic statements, $X\sim\mu_{n,\nu}$ and $Z\sim T_\nu$. Convex functions take finite real values. Here and throughout, $\stackrel{\mathrm d}{=}$ denotes equality
	in distribution: for real-valued random variables $U$ and $V$,
	this means $\mathbb P(U\le t)=\mathbb P(V\le t)$ for every
	$t\in\mathbb R$.
	
	\begin{theorem}[Optimal mean–variance lower deviations and stability]\label{prob-mean}
		Let $\nu>2$ and let $Y=f(X)$, where $f$ is convex and $0<\operatorname{Var}Y<\infty$. For every $a\in\R$,
		\begin{equation}\label{prob-optimal-mean}
			\mathbb P\!\left(Y\le\mathbb EY-
			\frac{\mathbb E(Z-a)_+}{\sqrt{\operatorname{Var}((Z-a)_+)}}\,\sqrt{\operatorname{Var}Y}\right)
			\le T_\nu(a).
		\end{equation}
		Equality holds if and only if $$Y\stackrel{\mathrm d}=b+c(Z-a)_+\quad \text{for some}\ b\in\R\ \text{and}\ c>0.$$
		The laws are realized by $f(x)=b+c(x_1-a)_+$.
		
		Moreover, for $y<\mathbb EY$ with $\mathbb P(Y\le y)>0$, lut $a=T_\nu^{-1}(\mathbb P(Y\le y))$ and
		\[
		Y_*=y+\frac{\mathbb EY-y}{\mathbb E(Z-a)_+}(Z-a)_+.
		\]
		Then $\mathbb EY_*=\mathbb EY$ and
		\begin{equation}\label{prob-W2}
			W_2^2\bigl(\mathcal L(Y),\mathcal L(Y_*)\bigr)
			\le\operatorname{Var}Y-
			\frac{(\mathbb EY-y)^2\operatorname{Var}((Z-a)_+)}
			{(\mathbb E(Z-a)_+)^2}.
		\end{equation}
	\end{theorem}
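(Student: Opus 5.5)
The plan is to use the Ehrhard-type inequality for $\mu_{n,\nu}$ announced in the abstract (proved later in the paper; I assume it here). It says that for convex $K,L$ and $\lambda\in[0,1]$,
\[
T_\nu^{-1}\mu_{n,\nu}(\lambda K+(1-\lambda)L)\ge \lambda T_\nu^{-1}\mu_{n,\nu}(K)+(1-\lambda)T_\nu^{-1}\mu_{n,\nu}(L).
\]
Applied to the sublevel sets $\{f\le s\}$, it will show that $F(s)=\mathbb P(Y\le s)$ satisfies: $s\mapsto T_\nu^{-1}(F(s))$ is concave on the interval where $0<F<1$. Equivalently, the quantile map
\[
g=F^{-1}\circ T_\nu
\]
is convex and nondecreasing, and $Y\stackrel{\mathrm d}=g(Z)$. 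The problem thus reduces to a one-dimensional extremal problem over convex nondecreasing $g$ with $g(Z)\in L^2$, which is finite since $\nu>2$.

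Next fix $y<\mathbb EY$ with $p=F(y)>0$ and $a=T_\nu^{-1}(p)$, so that $g(a)=y$ up to the usual quantile conventions at atoms. Write $h(z)=y+\kappa(z-a)_+$ and choose $\kappa=(\mathbb EY-y)/\mathbb E(Z-a)_+$, so that $\mathbb Eh(Z)=\mathbb EY$; this gives $h(Z)=Y_*$. The key claim is the following. Since $g$ is convex with $g(a)=y$ and $g\ge y$ to the right of $a$ where it lies above its tangent, while it is nondecreasing with $g\le y$ to the left, one has
\[
\operatorname{Cov}\bigl(g(Z)-h(Z),\,h(Z)\bigr)\ge0 .
\]
To see this, note that $h$ is constant on $\{Z\le a\}$. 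On $\{Z>a\}$ the difference $g-h$ is convex with value $0$ at $a$, so its integral against $(z-a)_+$ compares with its integral against a constant. A Chebyshev-type or single-crossing argument should give $\mathbb E[(g-h)(Z)(Z-a)_+]\ge \mathbb E[(g-h)(Z)]\,\mathbb E(Z-a)_+$, and I will organize this using $\mathbb E(g-h)(Z)=0$. Granting the claim,
\[
\operatorname{Var}Y=\operatorname{Var}h+\operatorname{Var}(g-h)+2\operatorname{Cov}(g-h,h)\ge \operatorname{Var}h+\mathbb E(g-h)^2 .
\]
Since $(g(Z),h(Z))$ is a coupling with equal means, $\mathbb E(g-h)^2\ge W_2^2$, and $\operatorname{Var}h=\kappa^2\operatorname{Var}((Z-a)_+)$. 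This yields \eqref{prob-W2}.

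The deviation bound follows from $W_2^2\ge0$:
\[
\mathbb EY-y\le \frac{\mathbb E(Z-a)_+}{\sqrt{\operatorname{Var}((Z-a)_+)}}\sqrt{\operatorname{Var}Y}.
\]
The ratio $R(a)=\mathbb E(Z-a)_+/\sqrt{\operatorname{Var}((Z-a)_+)}$ is nonincreasing in $a$; this has to be checked directly. Given $a$, set $y_0=\mathbb EY-R(a)\sqrt{\operatorname{Var}Y}$ and suppose $F(y_0)>T_\nu(a)$. Then $a'=T_\nu^{-1}F(y_0)>a$, and the inequality applied at $y_0$ gives $R(a)\le R(a')$. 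This forces strictness to fail, and strict monotonicity of $R$ produces a contradiction, which proves \eqref{prob-optimal-mean}.

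For equality, equality in \eqref{prob-optimal-mean} forces $W_2=0$ and $\mathbb E(g-h)^2=0$ at the corresponding $a$, so $g=h$ a.e. This gives $Y\stackrel{\mathrm d}=b+c(Z-a)_+$. The converse is a direct computation with $f(x)=b+c(x_1-a)_+$, using that the marginal of $\mu_{n,\nu}$ is $T_\nu$.

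The main obstacle is the covariance inequality for convex $g$ matched at $a$. I would try to reduce it by approximation to the extreme rays $g=(z-s)_+$ together with affine pieces, checking those cases directly; the monotonicity of $R$ is a secondary computation.
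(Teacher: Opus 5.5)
Your architecture is the paper's. You reduce to $Y=g(Z)$ with $g$ convex and non-decreasing via Theorem~\ref{prob-representation}, couple $g(Z)$ with $h(Z)=Y_*$, and rest everything on $\mathbb E[(g-h)(Z)\,(Z-a)_+]\ge0$. After expanding $h$, this is exactly the paper's bound $\mathbb E[(Y-y)(Z-a)_+]\ge(\mathbb EY-y)\,\mathbb E(Z-a)_+^2/\mathbb E(Z-a)_+$. Your splitting $\operatorname{Var}Y=\operatorname{Var}h+\mathbb E(g-h)^2+2\operatorname{Cov}(g-h,h)$ is a tidier form of the paper's expansion of $\mathbb E|Y-Y_*|^2$. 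The endgame via $W_2^2\ge0$ and the equality argument through $g=h$ a.e.\ also match.

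The gap is that this inequality, the only substantive step, is left as something that ``should'' follow. A plain Chebyshev covariance inequality under $T_\nu$ does not give it. On $(a,\infty)$ one has $g-h=g(z)-y-\kappa(z-a)$, which is negative just to the right of $a$ whenever $g'(a+)<\kappa$, so $g-h$ is not monotone. Your single-crossing idea does close it directly:
\begin{itemize}
\item On $(-\infty,a]$ we have $g-h=g-y\le0$.
\item On $(a,\infty)$ the function $g-h$ is convex and vanishes at $a$, so $(g-h)(z)/(z-a)$ is non-decreasing.
\item Hence there is $z_0\in[a,\infty]$ with $g-h\le0$ on $(-\infty,z_0)$ and $g-h\ge0$ on $(z_0,\infty)$.
\item If $z_0=\infty$, then $\mathbb E(g-h)(Z)=0$ forces $(g-h)(Z)=0$ a.s.
\item Otherwise $(g-h)(z)\bigl((z-a)_+-(z_0-a)\bigr)\ge0$ for every $z$. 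Integrating and using $\mathbb E(g-h)(Z)=0$ gives the claim; all terms are integrable because $Y,(Z-a)_+\in L^2$ for $\nu>2$.
\end{itemize}
No extreme-ray approximation is needed. The paper argues differently. It applies Chebyshev's inequality under the size-biased law $(z-a)_+\,dT_\nu(z)/\mathbb E(Z-a)_+$ to the two non-decreasing functions $(g(z)-y)/(z-a)$ and $z-a$, and then uses $\mathbb E(Y-y)_+\ge\mathbb EY-y$. Your route avoids both the weighted measure and that last step.

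The other thing you only assume is \emph{strict} decrease of $R$, and strictness is essential for the deviation bound. If $R$ were merely non-increasing, $R(a')\ge R(a)$ with $a'>a$ would give no contradiction. The paper obtains strictness from
\[
R'(a)=\frac{(\mathbb E(Z-a)_+)^2-\mathbb P(Z>a)\,\mathbb E(Z-a)_+^2}{\operatorname{Var}((Z-a)_+)^{3/2}}
\]
together with Cauchy--Schwarz for $\mathbf 1_{\{Z>a\}}$ and $(Z-a)_+$. That inequality is strict because $(Z-a)_+$ is not a.s.\ a multiple of $\mathbf 1_{\{Z>a\}}$. Finally, $g(a)=y$ holds exactly, with no caveat about atoms, since a finite convex $g$ is continuous (see the remark after Theorem~\ref{prob-representation}). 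With these two points supplied, your proof is complete.
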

	
	\par\smallskip
	We can also approximate the above theorem by the Gaussian case.
	Let $G\sim N(0,1)$. Taking $\nu\to\infty$ in Theorem~\ref{prob-mean} gives the exact Gaussian mean-variance lower-tail curve, see the proof in Corollary \ref{gauss-exact} and a similar form in \eqref{gauss-exact-eq} and \eqref{gauss-W2}. More precisely, let $a_t$ be the unique solution of (the existence and uniqueness is provided by the strictly monotonicity of the function of \(a\) below, see the proof in Corollary \ref{gauss-exact})
	\[
	\frac{\mathbb E(G-a_t)_+}{\sqrt{\operatorname{Var}((G-a_t)_+)}}=t.
	\]
	For $Y=f(G_n)$ with $f$ convex and $0<\operatorname{Var}Y<\infty$, the limiting inequality is
	\begin{equation}\label{intro-gauss-exact}
		\mathbb P(f(G_n)\le\mathbb Ef(G_n)-t\sqrt{\operatorname{Var}f(G_n)})\le\Phi(a_t),\qquad t>0.
	\end{equation}
	Equality holds precisely for the laws $b+c(G-a_t)_+$, $c>0$, so the curve is pointwise optimal. Section~\ref{prob-section} justifies the passage first for convex Lipschitz functions and then by convex approximation; this covers the full Gaussian integrability class. The structure of \eqref{intro-gauss-exact} is consistent with that of \eqref{pv-background}, \eqref{intro-valettas-variance} and \eqref{intro-chen}. The Proposition~\ref{gauss-comparison} shows that \eqref{intro-gauss-exact} strictly improves the explicit estimate \eqref{intro-valettas-variance} when $t>1$ proved by Valettas \cite{Valettas17} (and of course for \eqref{pv-background}), and \eqref{intro-chen} when $t>0$ proved by \cite{Chen26}, that is 
	\begin{equation*}
		\Phi(a_t)<\Phi\!\left(-\frac{t-1}{\sqrt{2\pi}}\right)<\frac{1}{2}e^{-\frac{\pi(t-1)^2}{1024}},
		\qquad t>1,
	\end{equation*}
	and
	\begin{equation*}
		\Phi(a_t)<e^{-t^2/2},
		\qquad t>0.
	\end{equation*}
	\par\smallskip
	
	The restriction $\nu>2$ cannot be removed while retaining a nontrivial variance normalization. Every non-constant finite convex function has a supporting affine function with non-zero slope. Its positive tail therefore dominates a shifted, rescaled Cauchy-type tail, so $\mathbb E|f(X)-b|^q=\infty$ for every $b\in\R$ and $q\ge\nu$. In particular, non-constant convex Cauchy observables have neither a finite absolute first moment nor a finite second moment. A median remains available, and moments of order $0<q<1$ can be finite. This leads to the next theorem, which replaces the mean and variance by the median and a centered moment below the tail parameter. Set $\|U\|_q=(\mathbb E|U|^q)^{1/q}$ for $q>0$; for $q<1$ this notation is only a moment scale.
	
	\begin{theorem}[Optimal centered-moment lower deviations]\label{prob-moment}
		Let $\nu>0$, $0<q<\nu$, and let $Y=f(X)$ for a convex function $f$. Let $m$ be the median of $Y$, and assume $0<\|Y-m\|_q<\infty$. Then, for every $r>0$,
		\begin{equation}\label{prob-optimal-moment}
			\mathbb P\!\left(Y\le m-
			\frac{\|Y-m\|_q}{\|\max(Z,-r)\|_{q}}r\right)
			\le T_\nu(-r).
		\end{equation}
		For each fixed $r>0$, equality holds if and only if
		\[
		Y\stackrel{\mathrm d}=m+c\max(Z,-r)
		\quad\text{for some }\ c>0.
		\]
		These laws are realized by $f(x)=m+c\max(x_1,-r)$.
	\end{theorem}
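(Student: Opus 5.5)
The plan is to reduce the statement to a one-dimensional comparison using the Ehrhard-type inequality for $\mu_{n,\nu}$ announced in the introduction, and then to prove that comparison by a pointwise convexity argument. For a convex $f$, the sublevel sets $\{f\le y\}$ are convex. The Ehrhard-type inequality, applied to convex combinations of sublevel sets, should give that $y\mapsto T_\nu^{-1}(\mathbb P(Y\le y))$ is concave and nondecreasing wherever $0<\mathbb P(Y\le y)<1$. Inverting, this yields $Y\stackrel{\mathrm d}=g(Z)$ with $g=F_Y^{-1}\circ T_\nu$ convex and nondecreasing on $\R$. I expect the main obstacle to be this reduction, that is, the Ehrhard-type inequality itself together with its equality cases. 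Here I take it as given from the paper; what remains is to handle the boundary values $p\in\{0,1\}$ and atoms carefully.

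With $Y=g(Z)$ in hand, set $h=g-m$. Since $Z$ is symmetric, $0$ is its median, and we may take $g(0)=m$; if the median of $Y$ is not unique, one checks that the argument below only uses $h(0)=0$ for a suitable choice of representative. Then $h$ is convex, nondecreasing and vanishes at $0$. Fix $r>0$ and put $k=-h(-r)/r\ge0$. Three pointwise facts then follow.
\begin{itemize}
\item On $[-r,0]$, convexity puts $h$ below its chord, so $h(z)\le kz\le0$ and hence $|h(z)|\ge k|z|$.
\item On $[0,\infty)$, the slope inequality for convex functions gives $h(z)\ge kz\ge0$.
\item On $(-\infty,-r)$, monotonicity gives $h(z)\le h(-r)=-kr$.
\end{itemize}
Together these show $|h(z)|\ge k\,|\max(z,-r)|$ for all $z$. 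Taking $L^q$ quasi-norms gives
\[
\|Y-m\|_q\ \ge\ k\,\|\max(Z,-r)\|_q ,
\qquad\text{i.e.}\qquad k\le s:=\frac{\|Y-m\|_q}{\|\max(Z,-r)\|_q}.
\]
The right-hand side is finite because $q<\nu$.

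To conclude, suppose $\mathbb P(Y\le m-sr)>T_\nu(-r)$. Then $g(z)\le m-sr$ for some $z>-r$, so by monotonicity $h(-r)\le -sr$, that is, $k\ge s$. Combined with $k\le s$ this forces $k=s$, and hence equality in the $L^q$ bound. Since the pointwise inequality then holds with equality a.e., we get $h(z)=s\max(z,-r)$ a.e. (and everywhere by continuity). For that $h$ one computes $\mathbb P(Y\le m-sr)=T_\nu(-r)$, which is a contradiction; this proves \eqref{prob-optimal-moment}.

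For the equality statement, equality in \eqref{prob-optimal-moment} means there are $z$ arbitrarily close to $-r$ from the right with $g(z)\le m-sr$ in the limit. The same squeeze then gives $h(-r)=-sr$ and $k=s$, and the rigidity of the pointwise inequality forces $g(z)=m+s\max(z,-r)$. Conversely, $f(x)=m+c\max(x_1,-r)$ has $Y\stackrel{\mathrm d}=m+c\max(Z,-r)$, whose median is $m$ and which attains equality by direct computation.
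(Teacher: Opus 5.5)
Your argument is correct, but it turns the chord comparison into the probability bound by a different route from the paper.

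\textbf{The paper's route.} It lets $-b$ be the right endpoint of the sublevel set $\{g\le m-sr\}$, where $s=\|Y-m\|_q/\|\max(Z,-r)\|_q$. It draws the chord from $(-b,g(-b))$ to $(0,m)$ and integrates to get $\|\max(Z,-b)\|_q/b\le\|\max(Z,-r)\|_q/r$. It then proves separately that $t\mapsto\|\max(Z,-t)\|_q/t$ is strictly decreasing, by splitting $\mathbb E|\max(Z,-t)|^q/t^q$ according to the sign of $Z$. This yields $b\ge r$. The equality case is handled via $b=r$ and rigidity of the chord estimate.

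\textbf{Your route.} You draw the chord at $-r$ itself, which gives the quantile bound $g(-r)\ge m-sr$ at once. Monotonicity of $g$ then does the rest: the probability can exceed $T_\nu(-r)$ only if $g(-r)=m-sr$. That is exactly the equality case of the chord estimate, and it forces $g=m+s\max(\cdot,-r)$, which is strictly increasing on $(-r,\infty)$, a contradiction.

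\textbf{What each buys.} You bypass the monotonicity lemma entirely. In its place you use the rigidity statement that the equality characterization needs anyway, so the inequality and its equality case come from one argument. The paper's route instead gives a direct, non-contradiction comparison $b\ge r$ between the true quantile and $r$. It is built from a fixed-threshold estimate plus a monotonicity property of the reference law alone. That is the form the paper later passes to the limit $\nu\to\infty$ in Corollary~\ref{gauss-exact}(ii).

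\textbf{Two small repairs.}
\begin{enumerate}
\item Your hedge about non-unique medians is unnecessary, and as phrased it would not help, since $m$ is given rather than chosen. Simply note that $g(0)$ is the unique median. If $u<g(0)$, continuity gives $\delta>0$ with $g(-\delta)>u$, so $\mathbb P(g(Z)\le u)\le T_\nu(-\delta)<\tfrac12$. The case $u>g(0)$ is symmetric.
\item In the equality case, the sublevel set $\{g\le m-sr\}$ is a closed lower half-line of $T_\nu$-mass $T_\nu(-r)\in(0,1)$, hence equals $(-\infty,-r]$. Its points approach $-r$ from the left, not the right. Closedness gives $g(-r)\le m-sr$ directly.
\end{enumerate}
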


	
	The moment comparison also reveals the effect of convexity in
	heavy-tailed probability. Under normalization by $\|Y-m\|_q$,
	Markov's inequality gives a bound of order $t^{-q}$.
	The pointwise optimal uniform lower-tail bound in
	\eqref{prob-optimal-moment}, however, has order $t^{-\nu}$,
	recovering the tail exponent of the reference measure.
	In the Cauchy case, parameterize the deviation by
	\[
	t=
	\frac{r}{(\mathbb E|\max\{Z,-r\}|^q)^{1/q}},
	\qquad 0<q<1.
	\]
	The corresponding optimal uniform bound satisfies
	\begin{equation}
		\label{eq:cauchy-optimal-asymptotic}
		T_1(-r)
		\sim
		\frac{\cos(\pi q/2)^{1/q}}{\pi}\,t^{-1}
		\qquad t\to\infty.
	\end{equation}
	A moment of order below one therefore yields a sharp lower-tail
	bound of order $t^{-1}$ with an explicit asymptotic constant.
	For each fixed deviation parameter, letting $\nu\to\infty$
	also gives the Gaussian form of
	\eqref{prob-optimal-moment} and its extremal distributions.

	
	The geometric input behind these sharp probability comparisons is a high-dimensional Ehrhard-type inequality. The classical Ehrhard inequality for Gaussian is the classical source of this
	connection. For standard
	Gaussian measure $\gamma_n$, it states that
	\begin{equation}\label{gaussian}
		\Phi^{-1}\!\left(
		\gamma_n((1-\lambda)A+\lambda B)
		\right)
		\ge
		(1-\lambda)\Phi^{-1}(\gamma_n(A))
		+\lambda\Phi^{-1}(\gamma_n(B)),
		\qquad 0<\lambda<1.
	\end{equation}
	Applied to convex sublevel sets, it implies concavity of
	\[
	s\longmapsto
	\Phi^{-1}\!\left(\mathbb P(f(G_n)\le s)\right)
	\]
	on its effective domain. The associated monotone rearrangement
	represents the distribution of a convex Gaussian observable as a
	non-decreasing convex transform of a one-dimensional Gaussian
	variable. We establish the corresponding mechanism for Cauchy-type measures. Theorem~\ref{prob-representation} then reduces the high-dimensional lower-tail problem to a one-dimensional extremal problem.
	
	Ehrhard proved \eqref{gaussian} for convex sets by symmetrization
	\cite{Ehrhard83}. Lata\l a \cite{Latala96} extended it to the case
	where only one set is convex. Borell subsequently removed the
	convexity assumption by a heat-flow argument \cite{Borell03}. Equality cases were studied by Ehrhard
	\cite{Ehrhard86} and Shenfeld and van Handel
	\cite{ShenfeldVanHandel18}. Beyond strengthening Gaussian
	log-concavity, Ehrhard's inequality connects Gaussian isoperimetry,
	probability estimates for convex functions and boundary
	Poincar\'e-type inequalities \cite{KolesnikovMilman17}.
	Recent work of Livshyts \cite{Livshyts24} investigates stronger
	Gaussian concavity for symmetric convex sets and related equality
	and stability questions.
	
	Outside the Gaussian setting, the absence of a product structure
	creates a substantial obstruction. Byczkowski and \.{Z}ak
	\cite{ByczkowskiZak21} established one-dimensional concavity and
	dilation inequalities for Cauchy-type densities. Byczkowski,
	Ma\l ecki and \.{Z}ak \cite{ByczkowskiMaleckiZak22} subsequently
	formulated the higher-dimensional Cauchy conjecture and proved
	a planar case for rectangles symmetric about a common coordinate
	axis. In a different non-Gaussian setting, Lin and Xing
	\cite{LinXing24} developed symmetrization for compactly supported
	$s$-Gaussian measures, obtaining a planar isoperimetric inequality
	and one-dimensional Ehrhard-type inequalities. Their
	symmetrization does not preserve convexity, which obstructs the
	classical argument in higher dimensions. Fradelizi, Langharst, Liu, Mar\'in Sola and Tang \cite{FLLST} obtained in Lemma 3.2 that the one-dimensional convex measure satisfies the Ehrhard's inequality for convex sets intersected with half-spaces. These developments
	highlight the need to control the dependence of fiber measures on the transverse coordinate.
	
	The following theorem proves the conjectured inequality for Cauchy measure in \cite{ByczkowskiMaleckiZak22}, and extends it to $\mu_{n,\nu}$. This is the first complete sharp Ehrhard-type inequality for the non-product measure in high dimension $n\geq2$,
	together with its equality characterization. It holds in every dimension for arbitrary convex sets and uses the inverse distribution function of the actual one-dimensional marginal. It provides a complete
	convex-set version for a non-Gaussian, non-product family of heavy-tailed measures, including the multivariate Cauchy measure.
	\begin{theorem}[Ehrhard inequality and equality characterization]\label{main}
		Let $n\ge1$, $\nu>0$. For convex sets $A,B\subset\R^n$,
		\begin{equation}\label{ehrhard}
			T_\nu^{-1}\bigl(\mu_{n,\nu}((1-\lambda)A+\lambda B)\bigr)
			\ge(1-\lambda)T_\nu^{-1}(\mu_{n,\nu}(A))+\lambda T^{-1}(\mu_{n,\nu}(B)),\qquad \lambda\in(0,1).
		\end{equation}
		Equality holds if and only if either
		\begin{enumerate}
			\item $\intr(A+B)=\emptyset$, or
			\item $A=\R^n$, or
			\item $\bar{A}=\bar{B}$, or
			\item $\bar{A}$ and $\bar{B}$ are parallel half-spaces with the same normal direction.
		\end{enumerate}
		In particular for convex bodies, equality if and only if $A=B$.
	\end{theorem}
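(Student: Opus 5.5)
The plan is to prove \eqref{ehrhard} by a variational argument over half-space-like competitors, using the mixture representation \eqref{mixture} only as a heuristic and working instead with fiber rearrangement.

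\emph{Step 1: reduction to $n=1$ plus an induction on dimension.} For $n=1$, convex sets are intervals, and the claim is that $I\mapsto T_\nu^{-1}(\mu_{1,\nu}(I))$ is concave under Minkowski combination of intervals. This is the one-dimensional concavity of Byczkowski--\.Zak \cite{ByczkowskiZak21}, or equivalently Lemma~3.2 of \cite{FLLST}. For $n\ge2$, write $x=(x',x_n)$. The marginal of $\mu_{n,\nu}$ on $x'$ is $\mu_{n-1,\nu}$, and the conditional law of $x_n$ given $x'$ is a rescaled Student law with $\nu+n-1$ degrees of freedom and scale $\sqrt{(\nu+|x'|^2)/(\nu+n-1)}$. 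I would use \emph{fiber rearrangement}: replace each fiber $A\cap(\{x'\}\times\R)$ by a half-line $\{x_n\le h_A(x')\}$ of the same conditional mass. The set $A^*=\{x_n\le h_A(x')\}$ then has the same $\mu_{n,\nu}$-measure as $A$.

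\emph{Step 2: joint concavity of the rearrangement.} The key claim is
\[
h_{(1-\lambda)A+\lambda B}\ \ge\ (1-\lambda)h_A+\lambda h_B \quad\text{pointwise along $(1-\lambda)x'+\lambda y'$},
\]
after normalizing $h$ by the fiber scale. Concretely, set
\[
\tilde h(x')=\sqrt{\nu+n-1}\;h(x')\big/\sqrt{\nu+|x'|^2},
\]
so that $T_{\nu+n-1}(\tilde h)$ is the conditional mass. The property needed is that
\[
(x',\text{interval})\mapsto \sqrt{\nu+|x'|^2}\;T_{\nu+n-1}^{-1}(\text{conditional mass})
\]
is jointly concave. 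This mixes the one-dimensional Ehrhard statement at level $\nu+n-1$ with the convexity of $x'\mapsto\sqrt{\nu+|x'|^2}$; the latter is the quantity controlling how fiber measures depend on the transverse coordinate. This gives
\[
((1-\lambda)A+\lambda B)^*\supseteq(1-\lambda)A^*+\lambda B^*,
\]
so it suffices to treat sets of the form $\{x_n\le h(x')\}$ with $h$ concave.

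\emph{Step 3: variational argument for epigraph-type sets.} Iterating Step~1 over coordinates alone does not close the argument, so I would apply a variational method to the reduced problem. Among sets $\{x_n\le h(x')\}$ with $h$ concave, the functional
\[
F(\lambda)=T_\nu^{-1}(\mu(\,\cdot\,))
\]
along Minkowski paths should be concave. I would compute the second variation of $h\mapsto T_\nu^{-1}(\mu_{n,\nu}(\{x_n\le h\}))$ and show it is nonpositive. Affine $h$ give half-spaces, whose measure is exactly $T_\nu(\text{signed distance})$ by rotation invariance, so $F$ is affine along parallel half-spaces; this is the equality case~(4). The second-variation inequality should reduce to a weighted Poincar\'e-type inequality for the fiber-conditioned measure. \textbf{This is the main obstacle.} My first attempt would be to exploit \eqref{mixture}: condition on $W_\nu$, apply Gaussian Ehrhard to each Gaussian slice, then use that the scale mixture of $\Phi$ at a common threshold is $T_\nu$ at that threshold exactly when the sets are half-spaces. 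Failing that, I would verify the needed log-concavity of the resulting one-variable profile directly.

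\emph{Step 4: equality cases.} Cases (1)--(4) clearly give equality: degenerate interior or $A=\R^n$ makes both sides $\pm\infty$ or trivial, $\bar A=\bar B$ gives equality by convexity of the set, and parallel half-spaces give equality by Step~3. Conversely, equality forces equality in every fiber inequality of Step~2 and in the second variation of Step~3. Strictness of the one-dimensional inequality except for equal intervals or nested half-lines then forces $h_A$ and $h_B$ to agree, or to be affine with the same gradient, in every coordinate frame. This yields $\bar A=\bar B$ or parallel half-spaces, and for convex bodies $A=B$.
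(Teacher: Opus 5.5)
Your Steps~1--2 match the paper's Section~2: the one-sided fiber rearrangement, the joint concavity of the rearranged endpoint, and hence $(1-\lambda)S_eA+\lambda S_eB\subset S_eC$ whenever $(1-\lambda)A+\lambda B\subset C$. The genuine gap is Step~3, which carries the whole theorem and which you leave open.

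After one rearrangement the sets are $\{x_n\le h(x')\}$ with $h$ concave, i.e.\ arbitrary closed convex sets having $-e_n$ in their recession cone, so nothing has been gained toward half-spaces. Your two proposed routes do not close this.
\begin{itemize}
\item \emph{Second variation.} Along linear paths $(1-\lambda)h_0+\lambda h_1$ the claimed concavity is false even for affine $h$. For $n=2$ and $c<0$, $h=c\pm x_1$ give half-spaces of measure $T_\nu(c/\sqrt2)$, while their average $h\equiv c$ gives $T_\nu(c)<T_\nu(c/\sqrt2)$. Along genuine Minkowski paths the combination is a sup-convolution of the $h$'s, and the weighted Poincar\'e inequality you would need is neither identified nor proved.
\item \emph{Mixture.} With $S=\sqrt{W_\nu/\nu}$ one has $\mu_{n,\nu}(K)=\mathbb E\,\gamma_n(SK)$. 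Gaussian Ehrhard then yields only $\mu_{n,\nu}(K_\lambda)\ge\mathbb E\,\Phi\bigl((1-\lambda)\alpha_A(S)+\lambda\alpha_B(S)\bigr)$, where $\alpha_K(s)=\Phi^{-1}(\gamma_n(sK))$. Turning this into $T_\nu^{-1}$-concavity requires a concavity property of $\alpha\mapsto T_\nu^{-1}(\mathbb E\,\Phi(\alpha(S)))$. That property is trivial on the linear profiles $\alpha(s)=as$ of half-spaces and is otherwise precisely the non-product difficulty.
\end{itemize}

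The missing idea is the paper's extremal argument, which needs neither iteration nor second variations.
\begin{itemize}
\item Fix $u\in\mathbb S^{n-1}$ and $p=(\mu_{n,\nu}(A),\mu_{n,\nu}(B),\mu_{n,\nu}(\overline{(1-\lambda)A+\lambda B}))$. Minimize $\sum_{i=0}^2\int_{K_i}\arctan\langle x,u\rangle\,d\mu_{n,\nu}(x)$ over triples of closed convex sets with $\mu_{n,\nu}(K_i)=p_i$ and $(1-\lambda)K_0+\lambda K_1\subset K_2$. This class is non-empty, and the integrand is bounded, so the functional is finite for every $\nu>0$.
\item Locally uniform convergence of $\operatorname{dist}(\cdot,K)$ gives compactness, with $\mu_{n,\nu}(K_j\triangle K)\to0$ and the inclusion preserved in the limit. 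So a minimizer exists.
\item By your Step~2, $S_e$ maps this class into itself for \emph{every} unit $e$ with $\langle e,u\rangle>0$, not only a coordinate direction. It strictly lowers $\int_K\arctan\langle x,u\rangle\,d\mu_{n,\nu}$ unless $S_eK=K$.
\item Hence each component of a minimizer is fixed by all these $S_e$. Then $K+v\subset K$ whenever $\langle v,u\rangle<0$, which forces $K=\{\langle x,u\rangle\le a_i\}$.
\item This half-space has measure $T_\nu(a_i)$, so the inclusion among the three minimizing half-spaces reads $T_\nu^{-1}(p_2)\ge(1-\lambda)T_\nu^{-1}(p_0)+\lambda T_\nu^{-1}(p_1)$, which is the inequality.
\end{itemize}

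Two subsidiary points. First, in Step~2, write the rearranged endpoint as $P(c(y),a,b)$ with $c(y)=\sqrt{\nu+|y|^2}$ and $P(c,a,b)=c\,q(a/c,b/c)$. Here $q$ is the rearranged endpoint for the normalized fiber density $(1+t^2)^{-(n+\nu)/2}$. One-dimensional Ehrhard for intervals only makes $P$ jointly concave. Composing it with the \emph{convex} function $c(y)$ also requires $P$ to be decreasing in $c$, i.e.\ the dilation inequality $q(ra,rb)>rq(a,b)$ for $r>1$. Both facts are needed for every $\nu>0$, beyond the Cauchy exponents treated by Byczkowski--\.{Z}ak.

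Second, in Step~4 there is no second-variation equality to exploit, and your conclusion about the fiber endpoints ``in every coordinate frame'' is not derived from anything. The paper argues as follows.
\begin{itemize}
\item It applies the inequality already proved to $S_eA,S_eB$ to get $\overline{(1-\lambda)S_eA+\lambda S_eB}=S_eC$, with $C=\overline{(1-\lambda)A+\lambda B}$.
\item It fixes $v$ with $\sup_{x\in C}\langle v,x\rangle<\infty$. For $\langle v,e\rangle>0$ it uses a uniform gap $g_K(y)\le L_K(y)-\delta\sqrt{\nu+|y|^2}$, valid when $-e\notin\operatorname{rec}K$, where $L_K$ is the affine bound from the supporting hyperplane. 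This shows the rearranged Minkowski sum is closed, so boundary points of $S_eC$ are attained.
\item A fiber comparison handles the case where exactly one of $\operatorname{rec}A,\operatorname{rec}B$ contains $-e$. When neither does, \emph{strict} joint concavity forces $(y_0,a_0,b_0)=(y_1,a_1,b_1)$; this uses strict convexity of $c$, strict decrease in $c$, and strict concavity of $q$. Together these give the dichotomy $A=B$ or $-e\in\operatorname{rec}A\cap\operatorname{rec}B$.
\item If $A\ne B$, this holds for all such $e$. So $\{d:\langle v,d\rangle\le0\}\subset\operatorname{rec}A\cap\operatorname{rec}B$, and $A,B$ are parallel half-spaces with normal $v$.
\end{itemize}
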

	
	Our proof strategy is to consider a minimization problem with a variational structure. For a fixed unit vector $u$, the main object of our study is the bounded functional
	\[
	K\longmapsto\int_K\arctan\ip{x}{u}\,d\mu_{n,\nu}(x).
	\]
	However, what is ultimately minimized is not itself, but its triple-sum variant over an appropriate class of triples, see in section \ref{sec:variational}. Our proof combines the joint concavity of the one-sided fiber rearrangement with this minimization. The joint concavity of the fiber rearrangement in its endpoints and transverse coordinate ensures preservation of convexity; compactness and an extremal argument then reduce the problem to parallel half-spaces with the same direction $u$.  Finally, we further use the linearity property of parallel half-spaces and a comparison of their quantiles, we derive \eqref{ehrhard}. The equality case is proved using supporting hyperplanes and recession directions. 
	Moreover, as we saw above, the proof method in this paper also applies to the classical Gaussian case as the limiting case $\nu\rightarrow+\infty$, see in Corollary \ref{gaussianlimit}.
	
	The paper is organized as follows. Section \ref{sec:joint} introduces the fiber rearrangement and prove its joint concavity. Section \ref{sec:variational} uses joint concavity to prove Ehrhard's inequality \eqref{ehrhard} in Theorem \ref{main}. Section \ref{sec:equality} establishes the equality characterization for Theorem \ref{main}. Section \ref{prob-section} contains the part on probability: Subsection \ref{sec-mean} proves the optimal mean–variance lower deviation, namely Theorem \ref{prob-mean}; Subsection \ref{sec-moment} proves the optimal centered-moment lower deviations Theorem \ref{prob-moment}; And Section \ref{sec-gaussian-limit} shows that the probability inequalities in the previous two sections can approximate to the Gaussian version as $\nu\rightarrow+\infty$. Finally, Section \ref{sec-comparison} compares our mean–variance lower deviation result with known results.
	
	\section{Fiber rearrangement and its joint concavity}\label{sec:joint}
	
	Byczkowski and \.{Z}ak \cite[Theorem 3.4]{ByczkowskiZak21} proved joint concavity of the fiber-rearrangement for one-dimensional sections of the spherical Cauchy measure. Their Theorems 3.1 and 3.3 establish, respectively, an Ehrhard-type inequality for intervals and a Landau-Shepp-type dilation inequality, the latter being an analogue of the classical Gaussian Landau-Shepp dilation inequality \cite{LS70}. In this section, we extend these results from the Cauchy exponents $(n+1)/2$ to arbitrary $(n+\nu)/2$ with $\nu>0$. Moreover, we establish strict joint concavity for finite interval and a strict dilation inequality, providing the strictness needed for our subsequent equality analysis.
	\begin{figure}[htbp]
		\centering
		\includegraphics[width=0.6\textwidth]{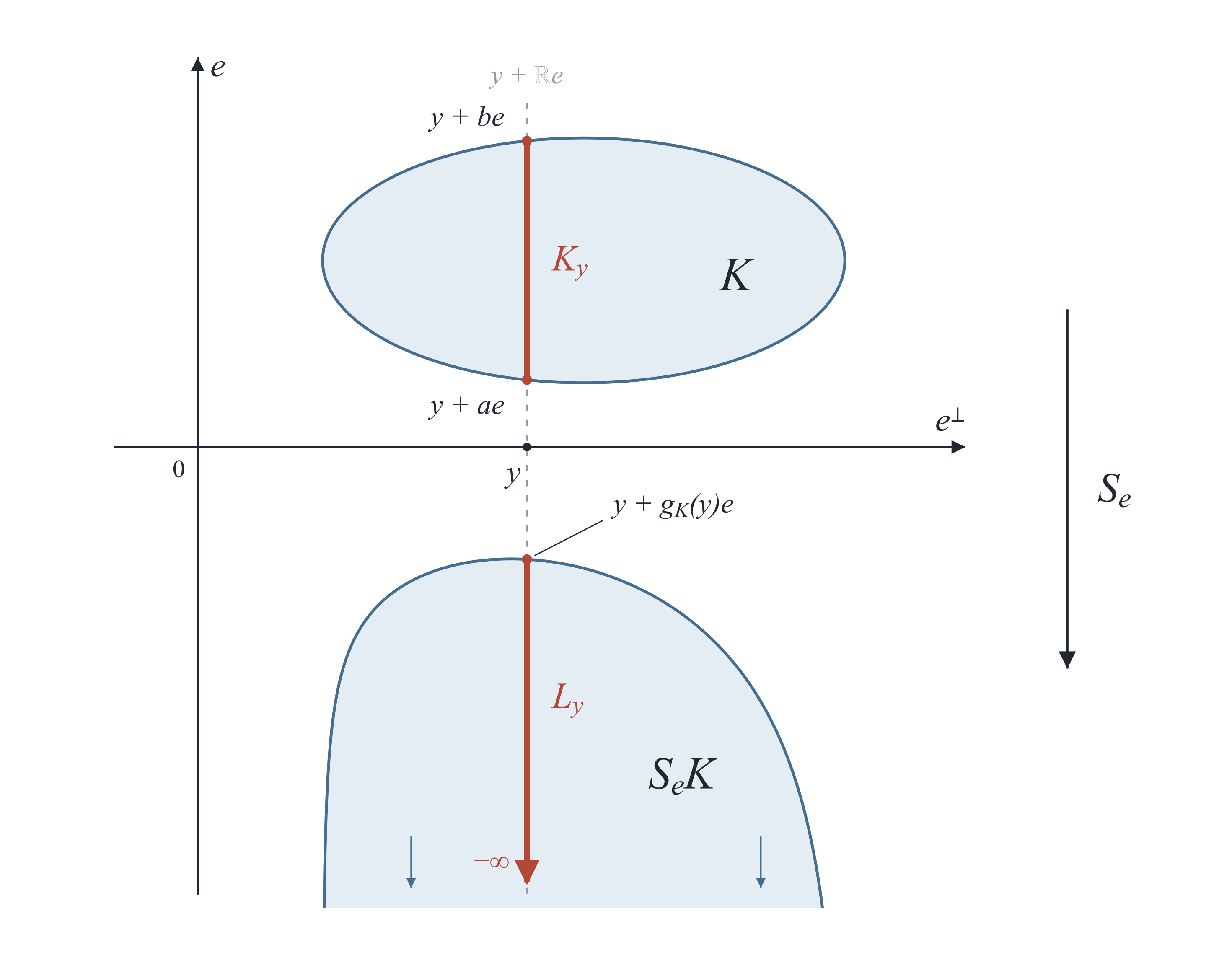}
		\caption{fiber rearrangement}
		\label{fig:fiber rearrangement}
	\end{figure}
	
	Fix a unit vector $e$, and write $x=y+te$ with $y\in e^\perp$. For Cauchy-type measure $\mu_{n,\nu}$ and its density $\rho_{n,\nu}$, set
	\[
	K_y=\{t:y+te\in K\},\qquad
	H_y(t)=\int_{-\infty}^t\rho_{n,\nu}(y+re)\dd r,
	\qquad m_K(y)=\int_{K_y}\rho_{n,\nu}(y+te)\dd t.
	\]
	For a closed convex set $K$, define the \textbf{fiber rearrangement} $S_eK$,
	\begin{equation}\label{symdef}
		S_eK=\{y+te:H_y(t)\le m_K(y)\}.
	\end{equation}
	Detailed illustration is shown in Figure \ref{fig:fiber rearrangement}.
	A zero-mass fiber becomes empty, and a fiber whose mass equals $H_y(+\infty)$ becomes the whole line. Every other fiber becomes $(-\infty,g_K(y)]$, where
	\begin{equation}\label{gKdef}
		H_y(g_K(y))=m_K(y).
	\end{equation}
	For a finite interval $K_y=[a,b]$ of positive length, $g_K(y)=g(y,a,b)$. Take this into \eqref{gKdef}, $g(y,a,b)$ is given by 
	\begin{equation}\label{gdef}
		\int_{-\infty}^{g(y,a,b)}(\nu+|y|^2+t^2)^{-(n+\nu)/2}\dd t
		=\int_a^b(\nu+|y|^2+t^2)^{-(n+\nu)/2}\dd t.
	\end{equation}
	The main result in this section is the following joint fiber concavity.
	\begin{lemma}[Joint fiber concavity]\label{joint}
		Let $\nu>0$. For $y\in\R^{n-1}$ and $a<b$, $g$ is strictly concave on $\{(y,a,b)\in\mathbb{R}^{n+1}:a<b\}$.
	\end{lemma}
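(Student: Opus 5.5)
The plan is to exploit the scaling structure of \eqref{gdef}, which reduces the problem to a one-variable concavity statement and a monotonicity statement. Put $p=(n+\nu)/2$, $\psi(v)=(1+v^2)^{-p}$ and $\Psi(u)=\int_{-\infty}^u\psi(v)\dd v$, and write $w=w(y)=\sqrt{\nu+|y|^2}$. The substitution $r=wv$ gives $\int_{-\infty}^t(w^2+r^2)^{-p}\dd r=w^{1-2p}\Psi(t/w)$. The common factor $w^{1-2p}$ cancels in \eqref{gdef}, so
\[
g(y,a,b)=G(a,b,w(y)),\qquad G(a,b,w)=w\,h\!\left(\tfrac aw,\tfrac bw\right),\qquad h(\alpha,\beta)=\Psi^{-1}\bigl(\Psi(\beta)-\Psi(\alpha)\bigr).
\]
Thus $G$ is the perspective of $h$, and $g$ is $G$ composed with the convex function $y\mapsto w(y)$. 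I will prove the lemma in three steps.

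\textbf{Step 1: strict concavity of $h$ on $\{\alpha<\beta\}$.} This is the one-dimensional Ehrhard statement for intervals under the density $\psi$, which is a rescaled Student density. I would prove it directly from the Hessian. We have $h_\beta=\psi(\beta)/\psi(h)$ and $h_\alpha=-\psi(\alpha)/\psi(h)$. Differentiating once more and using $(\log\psi)'(v)=-2pv/(1+v^2)$, the Hessian becomes a quadratic form in $(\psi(\alpha),\psi(\beta))$ whose coefficients involve $\alpha/(1+\alpha^2)$, $\beta/(1+\beta^2)$ and $h/(1+h^2)$. Negative definiteness then reduces to two scalar inequalities comparing $(\log\psi)'$ at $\alpha$, at $\beta$ and at $h$. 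The key inequalities are $h\ge\beta$ and $\Psi(h)\le\Psi(\beta)$, used together with the form of the tail of $\psi$. This mirrors the Cauchy computation of Byczkowski and \.{Z}ak \cite[Theorem 3.1]{ByczkowskiZak21}, with the exponent $(n+1)/2$ replaced by $p$; I would redo their argument for general $p>1/2$.

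\textbf{Step 2: concavity of $G$ and monotonicity in $w$.} Since $h$ is concave, its perspective $G$ is jointly concave on $\{a<b,\ w>0\}$ and positively $1$-homogeneous. For the composition with the convex map $y\mapsto w(y)$ to be concave, I need $G$ to be nonincreasing in $w$. By Euler's identity this means
\[
G_w=h-\alpha h_\alpha-\beta h_\beta\le0,\qquad\text{i.e.}\qquad h\,\psi(h)\le\beta\psi(\beta)-\alpha\psi(\alpha)\quad(\alpha<\beta,\ h=h(\alpha,\beta)).
\]
This is a dilation-type (Landau--Shepp) inequality: enlarging the scale $w$ cannot increase the rearranged endpoint. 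It corresponds to the Byczkowski--\.{Z}ak dilation theorem \cite[Theorem 3.3]{ByczkowskiZak21}. I expect this to be the main obstacle. I will check its sign and sharpness carefully, proving it with strict inequality.

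To prove it, I would fix $\alpha$ and view both sides as functions of $\beta$. As $\beta\downarrow\alpha$ we have $h\to-\infty$, and both sides tend to $0$. I would then compare the $\beta$-derivatives, using $\partial_\beta h=\psi(\beta)/\psi(h)$ and the fact that $v\mapsto v\psi(v)$ is explicit. This reduces the claim to an inequality between the values of $(1-(2p-1)v^2)/(1+v^2)$-type expressions at $h$ and at $\beta$, again controlled by $h\ge\beta$ together with the tail ordering.

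\textbf{Step 3: concavity of $g$ and strictness.} With Steps 1 and 2, $g=G\circ(a,b,w(y))$ is a concave function that is nonincreasing in the convex argument $w$, hence concave. For strictness, take two distinct points and the segment between them. If the $y$-components differ, then $w$ is strictly convex along the segment, because $\nu>0$ makes $w$ strictly convex. Combined with the strict inequality $G_w<0$ from Step 2, this gives a strict inequality. If the $y$-components coincide, then $w$ is constant along the segment and $(a,b)$ varies. Since $w>0$ is fixed, $G(\cdot,\cdot,w)=w\,h(\cdot/w,\cdot/w)$, and the strict concavity of $h$ from Step 1 gives strictness directly.
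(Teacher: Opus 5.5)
Your architecture is the paper's. You write $g$ as the perspective $w\,h(a/w,b/w)$ composed with the strictly convex $w(y)=\sqrt{\nu+|y|^2}$, obtain monotonicity in $w$ from the Euler inequality $h\psi(h)<\beta\psi(\beta)-\alpha\psi(\alpha)$, and split strictness according to whether the $y$-components agree. Your Step~3 is fine.

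The gap is in the two analytic inequalities that carry the lemma. First, the ordering you invoke is backwards. $\Psi(h)=\Psi(\beta)-\Psi(\alpha)<\Psi(\beta)$ gives $h<\beta$, and $\Psi(h)<\Psi(+\infty)-\Psi(\alpha)=\Psi(-\alpha)$ gives $h<-\alpha$. With the odd, strictly increasing $\kappa(v)=v(1+v^2)^{p-1}$, these two orderings yield exactly $h_{\alpha\alpha}<0$ and $h_{\beta\beta}<0$. The determinant condition, however, is
\[
\kappa(h)\bigl(\kappa(\alpha)-\kappa(\beta)\bigr)>\kappa(\alpha)\kappa(\beta).
\]
For $\alpha<0<\beta$ this says $\kappa(h)<\kappa(-\alpha)\kappa(\beta)/(\kappa(-\alpha)+\kappa(\beta))$, which is strictly stronger than $\kappa(h)<\min\{\kappa(-\alpha),\kappa(\beta)\}$. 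It does not follow from pointwise orderings and ``the form of the tail''. Deferring it to a redo of Byczkowski--\.{Z}ak leaves the real content of Step~1 unproved.

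Second, in Step~2 the one-sided derivative comparison fails. Put $D(\beta)=\beta\psi(\beta)-\alpha\psi(\alpha)-h\psi(h)$. Then $D'(\beta)=\psi(\beta)\bigl[\phi(\beta)-\phi(h)\bigr]$ with $\phi(v)=(1-(2p-1)v^2)/(1+v^2)$, a strictly decreasing function of $v^2$. Since $h<\beta$, $D'$ has the sign of $-(h+\beta)$. As $\beta\to\infty$ one has $h\to-\alpha$, so $D'<0$ eventually. For instance, with $p=1$ and $\alpha=0$ one finds $h=-1/\beta$ and $D(\beta)=2\beta/(1+\beta^2)$, which decreases on $(1,\infty)$. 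Hence ``both sides vanish as $\beta\downarrow\alpha$ and the derivatives are ordered'' does not give $D>0$.

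The missing idea is a two-endpoint unimodality argument. The function $h+\beta$ increases strictly from $-\infty$ to $+\infty$, so $D$ first increases strictly and then decreases strictly. Also $D(\beta)\to0$ as $\beta\to\infty$, because both sides tend to $-\alpha\psi(\alpha)$. Therefore $D>0$, and with this addition your fixed-$\alpha$ parametrization works.

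The paper uses the same device along the level curves $h=c$, where the derivative has the sign of $-(\alpha+\beta)$ and both end limits equal $c\psi(c)$. It then reuses the device for the determinant. Set $F(\alpha,\beta)=\alpha\beta/\bigl[\alpha(1+\beta^2)\psi(\beta)-\beta(1+\alpha^2)\psi(\alpha)\bigr]$, so that the condition above reads $\kappa(h)<F$. Along $h=c$, $F$ is unimodal with both end limits equal to $\kappa(c)$. This is exactly the inequality your Step~1 is missing.
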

	
	By the substitution of variables $t=\sqrt{\nu+|y|^2}s:=c(y)s$, the above integral becomes
	\begin{equation}\label{change of variable}
		\begin{split}
			\int_{-\infty}^{g(y,a,b)}(\nu+|y|^2+t^2)^{-(n+\nu)/2}\dd t&=\int_{-\infty}^{g/c}(c^2+c^2s^2)^{-(n+\nu)/2}cds\\
			&=c^{1-(n+\nu)}\int_{-\infty}^{g/c}(1+s^2)^{-(n+\nu)/2}ds.
		\end{split} 
	\end{equation}
	This suggests that one should first consider concavity of the following form
	\[
	f(t)=(1+t^2)^{-(n+\nu)/2},\qquad
	\qquad L(t)=\int_{-\infty}^t f(r)\dd r.
	\]
	Define
	\begin{equation}\label{qdef}
		q(a,b)=L^{-1}(L(b)-L(a)),\qquad a<b.
	\end{equation}
	\begin{lemma}\label{oned}
		The function $q$ is strictly concave. Moreover,
		\begin{equation}\label{euler}
			a q_a(a,b)+b q_b(a,b)-q(a,b)>0
			\qquad(a<b).
		\end{equation}
		Consequently, $q(ra,rb)>r q(a,b)$ for $r>1$.
	\end{lemma}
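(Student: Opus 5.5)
Write $k=(n+\nu)/2$ and $f=L'$, so $f(t)=(1+t^2)^{-k}$; put $p=L(b)-L(a)$ and $q=q(a,b)$, so that $L(q)=p$. The plan is to compute the derivatives of $q$ by implicit differentiation, reduce strict concavity to the sign of an explicit quadratic form, and then verify (\ref{euler}) and deduce the dilation statement by integrating along rays.

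\emph{Derivatives.} Differentiating $L(q)=L(b)-L(a)$ gives
\[
q_a=-\frac{f(a)}{f(q)},\qquad q_b=\frac{f(b)}{f(q)}.
\]
A second differentiation, using $(\log f)'(t)=-2kt/(1+t^2)=:\psi(t)$, gives
\[
q_{aa}=-\frac{f'(a)}{f(q)}-\frac{\psi(q)f(a)^2}{f(q)^2},\qquad
q_{bb}=\frac{f'(b)}{f(q)}-\frac{\psi(q)f(b)^2}{f(q)^2},\qquad
q_{ab}=\frac{\psi(q)f(a)f(b)}{f(q)^2}.
\]
With $\alpha=f(a)/f(q)$ and $\beta=f(b)/f(q)$ the Hessian is
\[
\nabla^2 q=\begin{pmatrix}-\alpha\psi(a)-\psi(q)\alpha^2 & \psi(q)\alpha\beta\\ \psi(q)\alpha\beta & \beta\psi(b)-\psi(q)\beta^2\end{pmatrix}.
\]

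\emph{Concavity.} For a direction $(\xi,\eta)$ I substitute $u=\alpha\xi$ and $v=\beta\eta$. The quadratic form then becomes
\[
-\psi(q)(u-v)^2-\frac{\psi(a)}{\alpha}u^2+\frac{\psi(b)}{\beta}v^2 .
\]
Strict negativity is equivalent to
\[
\frac{\psi(a)}{\alpha}>-\psi(q),\qquad -\frac{\psi(b)}{\beta}>-\psi(q),
\]
together with a determinant condition. Written out, the determinant condition reads
\[
\Bigl(\frac{\psi(a)}{\alpha}+\psi(q)\Bigr)\Bigl(-\frac{\psi(b)}{\beta}+\psi(q)\Bigr)>\psi(q)^2,
\]
or, expanded,
\[
\frac{\psi(a)f(q)}{f(a)}-\frac{\psi(b)f(q)}{f(b)}>\frac{\psi(a)\psi(b)f(q)^2}{\psi(q)f(a)f(b)}
\]
(with the sign of $\psi(q)$ tracked separately). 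Each of these is a one-variable statement about the function $R(t)=-\psi(t)/f(t)=2kt(1+t^2)^{k-1}$ evaluated at $a$, $b$ and $q$, subject to the constraint $L(q)=L(b)-L(a)$. Equivalently, the cleaner route is to set $J=1/(f\circ L^{-1})$, which is the Gaussian-isoperimetric-profile analogue, and to show that $J$ is convex in a suitably strong sense. The Gaussian fact that $\Phi'(\Phi^{-1})$ is concave is what gives the Ehrhard inequality for intervals, and Byczkowski--\.{Z}ak proved the case $k=(n+1)/2$ by exactly this kind of computation. I would follow their route with general $k$, checking that nothing used $2k-n=1$. The key inequality should reduce to the monotonicity of $R$ together with a tangent comparison $L(b)-L(a)\le\ldots$, using the fact that the tail $L(-s)$ behaves like $s^{1-2k}/(2k-1)$.

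\emph{Euler inequality (\ref{euler}) and the dilation statement.} Let $E=aq_a+bq_b-q=(bf(b)-af(a)-qf(q))/f(q)$. Then $E>0$ is equivalent to $D(a,b):=bf(b)-af(a)-qf(q)>0$. Since $tf(t)\to 0$ at both ends, I would write $tf(t)=\int_{-\infty}^{t}(sf(s))'\,ds$ and use the substitution $s=L^{-1}(\cdot)$. This expresses $D$ as an integral comparison of $h(p)=(sf)'\circ L^{-1}(p)/f(L^{-1}(p))$ over $[L(a),L(b)]$ versus $[0,L(b)-L(a)]$; a monotonicity/rearrangement property of $h$ then yields the sign. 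Alternatively, $E$ is the $r$-derivative of $q(ra,rb)-rq(a,b)$ at $r=1$, so one can use concavity together with behaviour as $a\to-\infty$, where $q\to b$ and $E\to 0^+$ can be checked directly. Finally, set $\phi(r)=q(ra,rb)/r$. Then $\phi'(r)=E(ra,rb)/r^2>0$, so $\phi(r)>\phi(1)$ for $r>1$, which is the statement $q(ra,rb)>rq(a,b)$.

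\emph{Main obstacle.} The hardest step is the determinant inequality in the concavity proof, which compares values at three linked points. I would try first to reduce it to concavity of $J$, via the Gaussian-style identity for two-variable "difference" quantiles, and to verify $J''<0$ by direct computation in terms of $R$.
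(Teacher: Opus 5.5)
Your first and second derivatives are correct, and so is the reduction of negative definiteness to three scalar conditions. In terms of $R(t)=(1/f)'(t)=2kt(1+t^2)^{k-1}$ these conditions read $q<-a$, $q<b$, and $R(q)\bigl(R(a)-R(b)\bigr)>R(a)R(b)$. This is exactly the paper's reduction, since its $\frac{t}{(1+t^2)f(t)}$ equals $R(t)/(2k)$. The first two follow at once from $L(q)<L(b)$ and $L(q)<L(\infty)-L(a)=L(-a)$, which you should state.

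The genuine gap is the third condition, the determinant inequality. It carries the entire content of strict concavity, and you leave it as the ``main obstacle'' with only directions:
\begin{itemize}
\item Tail asymptotics of $L$ cannot give an exact inequality valid for all $a<b$.
\item Deferring to Byczkowski--\.{Z}ak's computation does not prove the case of general real $k>1/2$.
\item The $J$-route is not viable as stated. Since $J'(p)=\frac12(f^{-2})'(L^{-1}(p))$, ordinary convexity of $J$ just means convexity of $f^{-2}$. That is in general weaker than convexity of $1/f$, i.e.\ monotonicity of $R$. But monotonicity of $R$ is already forced by $q_{aa}\le0$, because every pair $s<t$ occurs as $(q,-a)$. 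So plain convexity of $J$ cannot be the criterion, and you never say what stronger property is needed.
\end{itemize}
What is missing is a device that handles the coupling $L(q)=L(b)-L(a)$. The paper does this as follows:
\begin{itemize}
\item Fix the level set $q=c$ and parametrize it by $b(a)=L^{-1}(L(c)+L(a))$, $a\in(-\infty,-c)$.
\item Compute $\frac{d}{da}F(a,b(a))=\frac{f(a)(a^2-b^2)}{[a(1+b^2)f(b)-b(1+a^2)f(a)]^2}$, so $F(a,b(a))$ is unimodal with its peak at $a=-b$.
\item Check that both endpoint limits equal $\frac{c}{(1+c^2)f(c)}$; strictness follows.
\end{itemize}
An equivalent packaging, closer to the convexity criterion you seem to want, is this. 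Put $x=b$, $y=-a$, $\bar L=L(\infty)-L$; the constraint becomes $\bar L(q)=\bar L(x)+\bar L(y)$. After a case split on signs, the determinant condition becomes strict superadditivity of $\Psi=(1/R)\circ\bar L^{-1}$ on $(0,L(\infty)/2)$. That follows from $\Psi(0^+)=0$ together with strict convexity of $\Psi$, which in turn holds because $\frac{R'}{R^2f}=\frac{1}{2kt^2}+\frac{2k-1}{2k}$ is decreasing in $t>0$.

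For the Euler inequality \eqref{euler}, your integral representation is correct and can be finished, but the decisive sign is only asserted. Here is the missing step:
\begin{itemize}
\item Since $(sf(s))'=f(s)\bigl(1-\frac{2ks^2}{1+s^2}\bigr)$, you get $h(p)=1-\frac{2ks^2}{1+s^2}$ with $s=L^{-1}(p)$. This $h$ is symmetric about $L(\infty)/2$ and strictly decreasing in $|p-L(\infty)/2|$.
\item Set $\ell=L(b)-L(a)$ and $W(x)=\int_x^{x+\ell}h$. Then $W(x)=W(L(\infty)-\ell-x)$ and $W'(x)=h(x+\ell)-h(x)$, so $W$ strictly increases and then strictly decreases on $[0,L(\infty)-\ell]$.
\item Hence $D=W(L(a))-W(0)>0$, because $0<L(a)<L(\infty)-\ell$.
\end{itemize}
This is the paper's level-curve argument for $bf(b)-af(a)$ in different coordinates. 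Your alternative (concavity plus the limit $a\to-\infty$) does not work as stated. Concavity of $r\mapsto q(ra,rb)$ only makes $E(ra,rb)$ nonincreasing in $r$. Nothing connects a given $(a,b)$ monotonically to the boundary $a\to-\infty$, so $E\to0$ there gives no sign. The final dilation step is correct and agrees with the paper.
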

	
	\begin{proof}
		Differentiating \eqref{qdef}, we obtain
		\[
		q_a=-\frac{f(a)}{f(q)},\qquad
		q_b=\frac{f(b)}{f(q)},
		\]
		Since $f'(t)=-\frac{(n+\nu)t f(t)}{1+t^2}$, a second differentiation gives
		\[
		\begin{aligned}
			q_{aa}
			&=\frac{(n+\nu)f(a)^2}{f(q)}
			\left(\frac{a}{(1+a^2)f(a)}
			+\frac{q}{(1+q^2)f(q)}\right)\\
			q_{bb}
			&=\frac{(n+\nu)f(b)^2}{f(q)}
			\left(\frac{q}{(1+q^2)f(q)}
			-\frac{b}{(1+b^2)f(b)}\right)\\
			q_{ab}
			&=-\frac{(n+\nu)q f(a)f(b)}{(1+q^2)f(q)^2}.
		\end{aligned}
		\]
		The function 
		\[
		t\longmapsto \frac{t}{(1+t^2)f(t)}
		=t(1+t^2)^{(n+\nu-2)/2}
		\]
		is odd and strictly increasing. Moreover, $L(q)< L(b)$ and
		$L(q)<L(+\infty)-L(a)=L(-a)$, so $q<b$ and $q<-a$.
		It follows that $q_{aa}<0$ and $q_{bb}<0$. To prove that the Hessian is negative definite, it remains to show
		that $q_{aa}q_{bb}-q_{ab}^2>0$. It is equal to prove
		\[
		\frac{q}{(1+q^2)f(q)}< F(a,b), \qquad \text{where}~~F(a,b):=\frac{ab}{a(1+b^2)f(b)-b(1+a^2)f(a)}.
		\]
		Fix $c\in\mathbb R$ and regard $b$ as a function of $a$ on the
		level curve $q=c$. Explicitly,
		\[
		b=b(a)=L^{-1}\bigl(L(c)+L(a)\bigr),\qquad -\infty<a<-c.
		\]
		Here the parameter range follows from
		$L(-c)=L(+\infty)-L(c)$. Along this curve,
		\[
		b'(a)=\frac{f(a)}{f(b)}>0,
		\]
		and direct differentiation yields
		\[
		\frac{d}{da}F(a,b(a))
		=
		\frac{f(a)(a^2-b^2)}
		{\bigl[a(1+b^2)f(b)-b(1+a^2)f(a)\bigr]^2}.
		\]
		Since $a<b$ and $a^2-b^2=(a-b)(a+b)$, the derivative is positive
		when $a+b(a)<0$ and negative when $a+b(a)>0$. Thus $F(a,b(a))$ first increases strictly and then
		decreases strictly, with its unique maximum when $a=-b$.
		
		At the two ends of the parameter interval, $b(a)\to c$ as
		$a\to-\infty$, and $b(a)\to+\infty$ as $a\uparrow-c$.
		Since $tf(t)\to0$ as $|t|\to\infty$, we obtain
		\[
		\lim_{a\to-\infty}F(a,b(a))
		=
		\lim_{a\uparrow-c}F(a,b(a))
		=
		\frac{c}{(1+c^2)f(c)}.
		\]
		Consequently,
		\[
		F(a,b(a))>\frac{c}{(1+c^2)f(c)}
		\qquad(-\infty<a<-c).
		\]
		As $c$ is arbitrary, the determinant is strictly positive throughout
		$\{a<b\}$. Hence the Hessian is negative definite, and $q$ is strictly concave.
		
		For the second assertion, retain the same level curve and compute directly
		\[
		\frac{d}{da}\bigl(bf(b)-af(a)\bigr)
		=
		(n+\nu)f(a)
		\left(\frac{a^2}{1+a^2}-\frac{b^2}{1+b^2}\right).
		\]
		This derivative again has the opposite sign to $a+b$, while 
		\[
		\lim_{a\to-\infty}\bigl(bf(b)-af(a)\bigr)
		=
		\lim_{a\uparrow-c}\bigl(bf(b)-af(a)\bigr)
		=
		cf(c).
		\]
		The same argument therefore gives $bf(b)-af(a)>cf(c)$.
		Since $q=c$ on this curve, the first derivative formulas imply
		\[
		a q_a+b q_b-q
		=
		\frac{bf(b)-af(a)-qf(q)}{f(q)}>0.
		\]
		Finally,
		\[
		\frac{d}{dr}\left(\frac{q(ra,rb)}{r}\right)
		=
		\frac{ra\,q_a(ra,rb)+rb\,q_b(ra,rb)-q(ra,rb)}{r^2}>0
		\qquad(r>0).
		\]
		Comparing with the value at $r=1$ gives
		$q(ra,rb)>r q(a,b)$ for every $r>1$.
	\end{proof}

	Now we turn to Lemma \ref{joint}.
	\begin{proof}[Proof of Lemma~\ref{joint}]
		The change of variable $s=c(y)^{-1}t$ in \eqref{change of variable} gives
		\[
		g(y,a,b)=P(c(y),a,b),\qquad \text{where}\quad
		P(c,a,b)=c\,q\left(\frac{a}{c},\frac{b}{c}\right).
		\]
		The perspective $P$ is concave on $c>0$, $a<b$: for a convex combination of two triples, apply concavity of $q$ with weights proportional to the two values of $s$. Moreover,
		\[
		P_c=q\left(\frac{a}{c},\frac{b}{c}\right)-\frac a c q_a\left(\frac{a}{c},\frac{b}{c}\right)-\frac b c q_b\left(\frac{a}{c},\frac{b}{c}\right)<0
		\]
		by \eqref{euler}. For two triples $(y_i,a_i,b_i)$, let a subscript $\lambda$ denote their weighted average. Convexity of $c$ and the two properties of $P$ give
		\begin{align*}
			g(y_\lambda,a_\lambda,b_\lambda)
			&=P(c(y_\lambda),a_\lambda,b_\lambda)\\
			&\ge P((1-\lambda)c(y_0)+\lambda c(y_1),a_\lambda,b_\lambda)\\
			&\ge(1-\lambda)g(y_0,a_0,b_0)+\lambda g(y_1,a_1,b_1).
		\end{align*}
		If $y_0\ne y_1$, strict convexity of $c$ and $P_c<0$ make the first inequality strict. If $y_0=y_1$, strict concavity of $q$ makes the second inequality strict unless both endpoints also agree. This proves strict joint concavity.
	\end{proof}
	We next shows some properties for fiber rearrangement.
	
	\begin{proposition}\label{symproperties}
		\item (1). The set $S_eK$ is closed, and $\mu(S_eK)=\mu(K)$. 
		\item (2). The rearrangement is monotone under inclusion, that is, if $A\subset B$, then $S_eA\subset S_eB$. 
		\item (3). If $A,B,C$ are closed convex sets and
		\[
		(1-\lambda)A+\lambda B\subset C,
		\]
		then
		\begin{equation}\label{symminkowski}
			(1-\lambda)S_eA+\lambda S_eB\subset S_eC.
		\end{equation}
		\item (4). $S_eK$ is convex.
	\end{proposition}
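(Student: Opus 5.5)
The plan is to work fiber by fiber along the direction $e$, using the decomposition $x=y+te$, $y\in e^\perp$, and Fubini. Throughout, $K$ is a closed convex set, so each fiber $K_y$ is a closed interval (possibly empty, a half-line, or all of $\R$). The projection $\pi(K)=\{y:K_y\neq\emptyset\}$ is convex, and $m_K(y)\in[0,H_y(+\infty)]$.

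For (1), note that by \eqref{symdef} the fiber $(S_eK)_y=\{t:H_y(t)\le m_K(y)\}$ has $\rho_{n,\nu}(y+\cdot\,e)$-mass exactly $m_K(y)$. This holds because $H_y$ is continuous and strictly increasing, the empty and full-line conventions cover the extreme values, and $H_y(-\infty)=0$. Fubini then gives $\mu(S_eK)=\int m_K(y)\dd y=\mu(K)$. Closedness will follow from upper semicontinuity of $y\mapsto m_K(y)$, which comes from dominated convergence together with $\limsup_{y_k\to y}\mathbf 1_{K_{y_k}}\le\mathbf 1_{K_y}$ (since $K$ is closed). Suppose $y_k\to y$, $t_k\to t$ and $H_{y_k}(t_k)\le m_K(y_k)$. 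Joint continuity of $(y,t)\mapsto H_y(t)$ then gives $H_y(t)\le\limsup m_K(y_k)\le m_K(y)$; the case $t_k\to-\infty$ does not occur for a convergent sequence. Property (2) is immediate, because $A\subset B$ gives $m_A\le m_B$ pointwise, and the sublevel set in \eqref{symdef} is monotone in the level.

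Part (3) is the core of the proposition. I would prove it fiberwise. Take $y_0+t_0e\in S_eA$ and $y_1+t_1e\in S_eB$, and set $y_\lambda=(1-\lambda)y_0+\lambda y_1$. It suffices to show that $H_{y_\lambda}(t_\lambda)\le m_C(y_\lambda)$, or equivalently $t_\lambda\le g_C(y_\lambda)$ in the generic case. The hypothesis gives $(1-\lambda)A_{y_0}+\lambda B_{y_1}\subset C_{y_\lambda}$. Now write $A_{y_0}=[a_0,b_0]$ and $B_{y_1}=[a_1,b_1]$, both finite with positive length. By monotonicity of $H$ it is enough to have
\[
g_C(y_\lambda)\ge g(y_\lambda,a_\lambda,b_\lambda)\ge(1-\lambda)g(y_0,a_0,b_0)+\lambda g(y_1,a_1,b_1)\ge t_\lambda .
\]
Here the first inequality holds because $[a_\lambda,b_\lambda]\subset C_{y_\lambda}$ and $g$ is monotone in the fiber mass, the middle inequality is Lemma \ref{joint}, and the last holds because $t_i\le g(y_i,a_i,b_i)$.

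The main obstacle is the degenerate fibers: half-lines, whole lines, single points, and empty fibers. I would handle them as follows.
\begin{itemize}
\item Empty fibers contribute no points, so there is nothing to check.
\item Zero-length fibers have zero mass, so their rearranged fiber is empty.
\item Half-lines and whole lines are treated by approximation: replace $[a_i,b_i]$ by the truncations $[\max(a_i,-R),\min(b_i,R)]$. This only shrinks $A_{y_0}$ and $B_{y_1}$, so the inclusion into $C_{y_\lambda}$ is preserved. As $R\to\infty$, $g(y_i,\cdot,\cdot)\to g_{A}(y_0)$ or $g_{B}(y_1)$, with the value $+\infty$ when the fiber is the whole line.
\end{itemize}
Since $t_i<g_A(y_0)$ or $t_i = g_A(y_0)$ can be approached from the truncated values, we obtain $t_\lambda\le g_C(y_\lambda)$ by passing to the limit. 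Closedness of $S_eC$ from (1) handles the boundary case of equality.

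Finally, (4) follows from (3) by taking $A=B=C=K$, since $(1-\lambda)K+\lambda K\subset K$ for convex $K$.
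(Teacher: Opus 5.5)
Your proposal is correct and follows the paper's proof essentially step for step: upper semicontinuity of $m_K$ together with joint continuity of $(y,t)\mapsto H_y(t)$ gives closedness, Fubini/Tonelli gives mass preservation, and for (3) you truncate the fibers to finite positive-length intervals and then use the chain $g_C(y_\lambda)\ge g(y_\lambda,a_\lambda,b_\lambda)\ge(1-\lambda)g(y_0,a_0,b_0)+\lambda g(y_1,a_1,b_1)$ from Lemma~\ref{joint}. The differences are cosmetic: the paper treats all positive-mass fibers at once with an $\varepsilon$-margin ($H_{y_i}(t_i-\varepsilon)<m_{K_i}(y_i)$) instead of your case split and $R\to\infty$ limit, and your semicontinuity step is really the reverse Fatou lemma with an integrable majorant, not dominated convergence, since the fiber indicators need not converge.
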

	
	\begin{proof}
		If $y_j\to y$, closedness gives
		\[
		\limsup_j\mathbf1_K(y_j+te)\le\mathbf1_K(y+te).
		\]
		The densities are bounded by a constant times $(\nu+t^2)^{-(n+\nu)/2}$, which is integrable. The reverse Fatou inequality shows that $m_K$ is upper semi-continuous. The same bound implies continuity of $(y,t)\mapsto H_y(t)$. Thus \eqref{symdef} defines a closed set. Fiberwise preservation of mass and Tonelli's theorem give $\mu(S_eK)=\mu(K)$, including the zero and full fibers. This prove the item (1). And (2) follows directly from the definition.
		
		To prove \eqref{symminkowski} in (3), take $y_i+t_i e\in S_eK_i$, where $K_0=A$ and $K_1=B$. For $\varepsilon>0$,
		\[
		H_{y_i}(t_i-\varepsilon)<m_{K_i}(y_i).
		\]
		Truncation of the original interval fiber therefore supplies finite intervals $[a_i,b_i]\subset(K_i)_{y_i}$ of positive length with
		\[
		g(y_i,a_i,b_i)>t_i-\varepsilon.
		\]
		In this paragraph a subscript $\lambda$ denotes the weighted average with coefficients $1-\lambda$ and $\lambda$. The original inclusion implies the fiber inclusion
		\[
		[a_\lambda,b_\lambda]
		=(1-\lambda)[a_0,b_0]+\lambda[a_1,b_1]
		\subset C_{y_\lambda}.
		\]
		Consequently, Lemma~\ref{joint} gives
		\begin{equation*}
			\begin{split}
				g_C(y_\lambda)&\geq g(y_\lambda,a_\lambda,b_\lambda)\\
				&\geq(1-\lambda)g(y_0,a_0,b_0)+\lambda g(y_1,a_1,b_1)\\
				&>(1-\lambda)t_0+\lambda t_1-\epsilon.
			\end{split}
		\end{equation*}
		Letting $\varepsilon\downarrow0$ proves \eqref{symminkowski}. 
		
		Finally, to prove (4), just Applying (3) with $A=B=C=K$.
	\end{proof}
	
	\section{A variational proof of Theorem \ref{main}}\label{sec:variational}
	In this section, we prove the Ehrhard-type inequality for Cauchy-type measure \eqref{ehrhard} in Theorem \ref{main}. Fix
	$0<\lambda<1$, a unit vector $u\in\mathbb S^{n-1}$, and
	$p=(p_0,p_1,p_2)\in(0,1)^3$. Let
	\begin{equation*}
		\mathcal{T}_p=\bigl\{(K_0,K_1,K_2):\;K_i\subset\mathbb R^n 
		\text{ closed and convex},\ 
		\mu_{n,\nu}(K_i)=p_i\ (i=0,1,2),(1-\lambda)K_0+\lambda K_1\subset K_2\bigr\}.
	\end{equation*}
	Whenever $\mathcal T_p$ is non-empty, we consider the following minimization problem,
	\[
	\inf_{(K_0,K_1,K_2)\in\mathcal T_p}
	\mathcal J_u(K_0,K_1,K_2),
	\]
	where
	\[
	\mathcal J_u(K_0,K_1,K_2)=\sum_{i=0}^2J_u(K_i), \qquad 
	J_u(K)=\int_K\arctan\langle x,u\rangle\,d\mu_{n,\nu}(x).
	\]
	The bounded integrand makes this problem well defined. We first prove that a minimizer exists, then show
	that every minimizing triple consists of parallel half-spaces with the
	same orientation, and finally apply this reduction to Theorem \ref{main}.

	\subsection{Existence of a minimizer}\label{existence}
	We begin with a compactness lemma for closed convex sets of prescribed measure. Recall that, since $\mu$ has a strictly positive density, a convex set has positive $\mu$-measure if and only if its interior is non-empty. We shall also use the facts that every closed convex set $K$ with non-empty interior satisfies
	\begin{equation}\label{convexfacts}
		K=\overline{\intr K}, \qquad \mu_{n,\nu}(\partial K)=0.
	\end{equation}
	\begin{lemma}[Fixed-mass compactness]\label{compactness}
		For $0<p<1$, the family of closed convex sets $K$ with $\mu(K)=p$ is sequentially compact for locally uniform convergence of the distance functions $\dist(\cdot,K)$. Under this convergence,
		\begin{equation}\label{measureconvergence}
			\mu_{n,\nu}(K_j\mathbin\triangle K)\longrightarrow0.
		\end{equation}
		Moreover, the constraint $(1-\lambda)A+\lambda B\subset C$ is closed under this convergence of all three sets.
	\end{lemma}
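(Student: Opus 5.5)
The plan is to use the standard compactness of closed convex sets under Hausdorff-type convergence, i.e.\ locally uniform convergence of distance functions. The fixed mass $0<p<1$ then serves two purposes: it rules out degenerate limits such as the empty set or a set of measure zero, and it passes to the limit.

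\textbf{Step 1: extracting a limit.} Let $(K_j)$ be closed convex sets with $\mu_{n,\nu}(K_j)=p$. Since $p>0$, each $K_j$ meets a fixed ball $B_R$. Otherwise $K_j\subset\R^n\setminus B_R$ would be contained in a half-space missing $B_R$, and its measure would be at most $\mu_{n,\nu}(\{x_1>R\})=1-T_\nu(R)$, which is smaller than $p$ for $R$ large. Hence $\dist(0,K_j)\le R$, so the functions $\dist(\cdot,K_j)$ are $1$-Lipschitz and uniformly bounded on compact sets. Arzel\`a--Ascoli and a diagonal argument give a subsequence converging locally uniformly to some function $d$. Set $K=\{d=0\}$. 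This set is closed and nonempty, and it is convex, because Kuratowski limits of convex sets are convex. Moreover $d=\dist(\cdot,K)$; this is the standard identification of local Hausdorff (Painlev\'e--Kuratowski) limits, and I would cite it or check it directly.

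\textbf{Step 2: measure convergence \eqref{measureconvergence}.} Here I would show that $\mathbf 1_{K_j}\to\mathbf 1_K$ pointwise off $\partial K$, and then apply dominated convergence with respect to the probability $\mu_{n,\nu}$.

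If $x\notin K$, then $d(x)>0$, so $\dist(x,K_j)>0$ eventually and $x\notin K_j$.

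If $x\in\intr K$, pick a simplex $\Delta$ around $x$ inside $K$. Each vertex $v$ of $\Delta$ lies in $K$, so there are points $v_j\in K_j$ with $v_j\to v$. Then $\operatorname{conv}\{v_j\}\subset K_j$ contains $x$ for large $j$.

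This gives convergence everywhere except on $\partial K$. To finish I need $\mu(\partial K)=0$, which by \eqref{convexfacts} follows once $K$ has nonempty interior. This is the main obstacle: I must exclude a degenerate limit of measure zero while the masses stay equal to $p$.

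I would handle it by an upper-semicontinuity argument for closed sets. Let $K^\varepsilon$ be the closed $\varepsilon$-neighbourhood of $K$. Locally uniform convergence gives $K_j\cap B_M\subset K^\varepsilon$ for large $j$. Hence
\[
p=\mu(K_j)\le\mu(K^\varepsilon)+\mu(\R^n\setminus B_M).
\]
Letting $j\to\infty$, then $M\to\infty$, then $\varepsilon\to0$, gives $\mu(K)\ge p>0$, since $K$ is closed and $\bigcap_\varepsilon K^\varepsilon=K$. Therefore $\intr K\ne\emptyset$, so $\mu(\partial K)=0$, and dominated convergence yields \eqref{measureconvergence}. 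In particular $\mu(K)=p$, so the limit stays in the family, which proves sequential compactness.

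\textbf{Step 3: the constraint is closed.} Suppose $(1-\lambda)A_j+\lambda B_j\subset C_j$, with $A_j\to A$, $B_j\to B$ and $C_j\to C$ in the above sense. Take $a\in A$ and $b\in B$. Locally uniform convergence of the distance functions gives points $a_j\in A_j$ and $b_j\in B_j$ with $a_j\to a$ and $b_j\to b$. Then $(1-\lambda)a_j+\lambda b_j\in C_j$, so
\[
\dist\bigl((1-\lambda)a_j+\lambda b_j,\,C_j\bigr)=0.
\]
Passing to the limit, using local uniform convergence together with the $1$-Lipschitz bound, gives $\dist\bigl((1-\lambda)a+\lambda b,\,C\bigr)=0$. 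Since $C$ is closed, $(1-\lambda)a+\lambda b\in C$, which is the required inclusion.
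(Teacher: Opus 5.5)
Your proposal is correct and follows the paper's proof in all essentials: the Arzel\`a--Ascoli extraction, pointwise convergence of $\mathbf 1_{K_j}$ off $\partial K$ followed by dominated convergence, and the same approximating-points argument for closedness of the constraint; the one local variation is that you capture interior points by perturbing the vertices of a simplex, where the paper uses separating unit normals and compactness of the sphere. The ``main obstacle'' you single out is not really one, since a convex set with empty interior lies in a proper affine subspace, so $\mu_{n,\nu}(\partial K)=0$ for every convex $K$ (this is how the paper argues) and dominated convergence then gives $\mu_{n,\nu}(K)=p$ directly, which makes your valid semicontinuity bound $\mu_{n,\nu}(K)\ge p$ redundant.
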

	
	\begin{proof}
		Let $K_j$ be a sequence of closed convex sets with $\mu(K_j)=p$. We first extract a convergent subsequence. Choose $R>0$ such that $\mu(\R^n\setminus B_R)<p$,
		where $B_R$ is the closed ball with radius $R$ centered at the origin. Every $K_j$ meets $B_R$, hence the distance function 
		\[
		d_j(x)=\operatorname{dist}(x,K_j)
		\]
		are nonnegative, convex and 1-Lipschitz, with $d_j(0)\le R$. By the Arzel\`a–Ascoli theorem and a diagonal argument, we may select to a subsequence, without changing notation, such that $d_j$ converges uniformly on compact sets to a nonnegative convex 1-Lipschitz function $d$.
		
		Let $K=\{d=0\}$. Points of $K_j\cap B_R$ have a convergent subsequence, whose limit lies in $K$. Thus $K$ is nonempty, closed, and convex. The Lipschitz bound gives $d(x)\le\dist(x,K)$. Conversely, for fixed $x$, let $z_j$ be a nearest point of $K_j$ to $x$. Since $|x-z_j|=\dist(x,K_j)$ is bounded, a subsequence converges to $z\in K$, and $|x-z|=d(x)$. Hence $d=\dist(\cdot,K)$. In particular, every $z\in K$ admits approximating points $z_j\in K_j$ with $z_j\to z$.
		
		We next show that the measure is preserved in the limit. If $x\notin K$, then $d(x)>0$, so $x\notin K_j$ for all sufficiently large $j$. Let now $x\in\operatorname{int}K$. Suppose that $x\notin K_j$ along a subsequence.Then there are unit vectors $v_j$ such that
		\[
		\langle v_j,z-x\rangle\le0
		\qquad\text{for every }z\in K_j.
		\]
		Selecting a subsequence, we may assume that $v_j\to v$, where $|v|=1$. Approximate each $z\in K$ by points of $K_j$ and pass to the limit. We obtain
		\[
		\langle v,z-x\rangle\le0
		\qquad\text{for every }z\in K,
		\]
		which contradicts $x\in\operatorname{int}K$. Therefore $x\in K_j$ eventually. It follows that $1_{K_j}\to1_K$ outside $\partial K$. The boundary has $\mu$-measure zero: if $K$ has nonempty interior, this follows from \eqref{convexfacts}, otherwise $K$ is contained in a proper affine subspace. Dominated Convergence Theorem now gives
		\[
		\mu(K_j\triangle K)
		=\int_{\mathbb R^n}|\mathbf1_{K_j}-\mathbf1_K| d\mu_{n,\nu}
		\longrightarrow0.
		\]
		In particular, $\mu_{n,\nu}(K)=p$. This proves the compactness assertion and \eqref{measureconvergence}.
		
		Finally, suppose that $(A_j, B_j, C_j)$ converge to $(A, B, C)$ respectively, and $(1-\lambda)A_j+\lambda B_j\subset C_j$. Take arbitrary $a\in A$ and $b\in B$, choose $a_j\in A_j$ and $b_j\in B_j$ such that $a_j\to a$ and $b_j\to b$. Then 
		\[
		c_j=(1-\lambda)a_j+\lambda b_j\in C_j,
		\qquad
		c_j\longrightarrow c=(1-\lambda)a+\lambda b.
		\]
		Consequently,
		\[
		\operatorname{dist}(c,C)
		=\lim_{j\to\infty}\operatorname{dist}(c,C_j)
		\le\lim_{j\to\infty}|c-c_j|=0.
		\]
		Since $C$ is closed, $c\in C$. Thus
		\[
		(1-\lambda)A+\lambda B\subset C,
		\]
		as required.
	\end{proof}
	Now we can establish the existence of a minimizer.
	Assume $\mathcal T_p\ne\emptyset$ and take a minimizing sequence.
	Applying Lemma~\ref {compactness} successively to its three
	components gives a common subsequence converging to a triple in
	$\mathcal T_p$. Moreover,
	\[
	|J_u(K)-J_u(L)|\le\frac{\pi}{2}\mu(K\triangle L).
	\]
	Thus $\mathcal J_u$ converges along this subsequence.
	Since $|\mathcal J_u|\le3\pi/2$, its infimum is finite and is attained.
	In particular, every non-empty class $\mathcal T_p$ admits a minimizing
	triple.
	
	\subsection{Minimizers consist of parallel half-spaces with the same direction}\label{minimizer structure}
	The characterization of minimizing triples relies on two facts:
	rearrangement strictly decreases $J_u$ unless the set is already
	rearranged, and the common fixed points of the relevant rearrangements
	are halfspaces.
	\begin{lemma}[Directional monotonicity]\label{moment}
		Fix a unit vector $u$, $J_u$ is continuous. If $\ip{e}{u}>0$, then
		\begin{equation}\label{Jineq}
			J_u(S_eK)\le J_u(K).
		\end{equation}
		For a closed convex set of positive measure, equality holds if and only if $S_eK=K$.
	\end{lemma}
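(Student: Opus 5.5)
The plan is to argue fiber by fiber. Write $x=y+te$ with $y\in e^\perp$ and split $u=\ip{e}{u}e+u'$ with $u'\perp e$. Then $\ip{x}{u}=\ip{y}{u'}+\ip{e}{u}\,t$. For fixed $y$, the map $t\mapsto\arctan(\ip{y}{u'}+\ip{e}{u}t)$ is strictly increasing, because $\ip{e}{u}>0$. It is also bounded. By Tonelli,
\[
J_u(K)=\int_{e^\perp}\int_{K_y}\arctan\bigl(\ip{y}{u'}+\ip{e}{u}t\bigr)\rho_{n,\nu}(y+te)\dd t\dd y .
\]
By Proposition~\ref{symproperties}(1), the fiber $(S_eK)_y$ is either empty, the whole line, or $(-\infty,g_K(y)]$. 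In each case it carries the same mass $m_K(y)$ for the fiber measure $\rho_{n,\nu}(y+te)\dd t$ as $K_y$ does.

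The key one-dimensional step is a bathtub principle. Take a finite measure $\sigma$ on $\R$ with positive density and a strictly increasing bounded function $h$. Among Borel sets $E$ with $\sigma(E)=m$, the integral $\int_E h\dd\sigma$ is minimized by the left half-line $E^*=(-\infty,g]$ with $\sigma(E^*)=m$. To see this, note that $E\setminus E^*\subset(g,\infty)$ and $E^*\setminus E\subset(-\infty,g]$, and these two pieces have equal $\sigma$-mass. This gives
\[
\int_E h\dd\sigma-\int_{E^*}h\dd\sigma=\int_{E\setminus E^*}(h-h(g))\dd\sigma+\int_{E^*\setminus E}(h(g)-h)\dd\sigma\ge0 .
\]
Equality holds iff $\sigma(E\triangle E^*)=0$, because both integrands are strictly positive off the single point $g$ and $\sigma$ has a density. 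Integrating this over $y$ gives \eqref{Jineq}. Continuity of $J_u$ is already contained in the estimate $|J_u(K)-J_u(L)|\le\frac{\pi}{2}\mu(K\triangle L)$, which I will simply restate.

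For the equality case, suppose $J_u(S_eK)=J_u(K)$. Since the fiberwise deficits are nonnegative and integrate to zero, for a.e.\ $y$ the fiber $K_y$ agrees with $(S_eK)_y$ up to a null set. Hence $\mu(K\triangle S_eK)=0$. It remains to upgrade this to set equality, and this is the step I expect to need care.

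$K$ is a closed convex set of positive measure, and $S_eK$ is closed and convex by Proposition~\ref{symproperties}(4) with the same positive measure, so both have nonempty interior. By \eqref{convexfacts} each equals the closure of its interior. Two convex sets with nonempty interior whose symmetric difference is $\mu$-null have the same interior: if $x\in\intr K\setminus S_eK$, a small ball around $x$ lies in $K$, and since $S_eK$ is closed, part of that ball avoids $S_eK$ and has positive measure, a contradiction. Symmetrically, $\intr(S_eK)\subset K$, so $\intr(S_eK)\subset\intr K$. Comparing closures of interiors gives $K=S_eK$. Conversely, $S_eK=K$ trivially gives equality in \eqref{Jineq}.
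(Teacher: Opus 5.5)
Your proposal is correct and follows essentially the same route as the paper. Both arguments compare each fiber $K_y$ with the left half-line of equal fiber mass, using strict monotonicity of $t\mapsto\arctan(\ip{y}{u}+t\ip{e}{u})$, integrate the fiberwise deficits by Fubini, and then upgrade $\mu(K\triangle S_eK)=0$ to $K=S_eK$ via $K=\overline{\intr K}$ and $\rho_{n,\nu}>0$. Your interior-comparison step spells out that last upgrade in more detail than the paper does. For continuity, the Lipschitz estimate still has to be combined with \eqref{measureconvergence}, as the paper does, to get continuity in the distance-function topology.
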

	
	\begin{proof}
		Continuity follows from \eqref{measureconvergence} and boundedness of the integrand. We now assume that $\langle e,u\rangle>0$, fix $y\in e^\perp$, and let
		\[
		K_y=\{t\in\mathbb R: y+te\in K\},
		\qquad
		L_y=\{t\in\mathbb R: y+te\in S_eK\}.
		\]
		Write
		\[
		d\nu_y(t)=\rho(y+te) dt,
		\qquad
		\varphi_y(t)=\arctan\bigl(\langle y,u\rangle
		+t\langle e,u\rangle\bigr).
		\]
		Then $\varphi_y$ is strictly increasing, and the definition of $S_eK$ gives
		\[
		\nu_y(K_y)=\nu_y(L_y).
		\]
		If this common mass is zero or equals $\nu_y(\mathbb R)$, the two fibers agree $\nu_y$-almost everywhere, and their integrals are equal. Otherwise, $L_y=(-\infty,h]$ for some $h\in\mathbb R$. Put
		\[
		E=K_y\setminus L_y,
		\qquad
		F=L_y\setminus K_y.
		\]
		By equality of the fiber masses,
		\[
		\nu_y(E)=\nu_y(F).
		\]
		Moreover, $E\subset(h,\infty)$ and $F\subset(-\infty,h]$. Hence
		\begin{align*}
			D(y)
			&:=\int_{K_y}\varphi_y d\nu_y
			-\int_{L_y}\varphi_y d\nu_y\\
			&=\int_E\varphi_y d\nu_y-\int_F\varphi_y d\nu_y\\
			&=\int_E\bigl(\varphi_y(t)-\varphi_y(h)\bigr) d\nu_y(t)
			+\int_F\bigl(\varphi_y(h)-\varphi_y(t)\bigr) d\nu_y(t)\ge0\\
		\end{align*}
		Since $\varphi_y$ is strictly increasing and $\nu_y({h})=0$, equality holds if and only if
		\[
		\nu_y(K_y\triangle L_y)=0.
		\]
		Integrating over $y\in e^\perp$ and applying Fubini's theorem, we obtain
		\[
		J_u(K)-J_u(S_eK)
		=\int_{e^\perp}D(y) dy\ge0.
		\]
		This proves \eqref{Jineq}. Since $D(y)\ge0$, equality holds if and only if the original and rearranged fibers agree $\nu_y$-almost everywhere for almost every $y$, or equivalently,
		\[
		\mu(K\triangle S_eK)=0.
		\]
		
		Finally, suppose that $K$ is closed and convex with positive measure. By Proposition \ref{symproperties}, the same is true for $S_eK$. Two such sets that agree $\mu$-almost everywhere coincide as sets, since $\rho>0$ and each set is the closure of its interior. Thus equality holds if and only if $S_eK=K$.
	\end{proof}
	
	\begin{lemma}[Common fixed points]\label{fixedpoints}
		Let $K$ be a non-empty proper closed convex set. If $S_eK=K$ for every unit vector $e$ with $\ip{e}{u}>0$, then
		\[
		K=\{x:\ip{x}{u}\le a\}
		\]
		for some $a\in\R$.
	\end{lemma}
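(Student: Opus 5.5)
First, interpret the fixed-point condition. $S_eK=K$ says that for every $y\in e^\perp$ the fiber $K_y$ is empty, all of $\R$, or a half-line $(-\infty,g_K(y)]$. Equivalently, $K$ is invariant under translation by $-e$: $K-te\subset K$ for all $t\ge0$. So $-e$ lies in the recession cone $\rec K$ for every unit $e$ with $\ip{e}{u}>0$. The recession cone is closed and convex, and these vectors $-e$ fill the open half-space $\{v:\ip{v}{u}<0\}$. Taking the closure gives
\[
\{v:\ip{v}{u}\le 0\}\subset \rec K .
\]

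Next, determine $K$ from this recession cone. Since $K$ is proper, it is contained in a closed half-space $\{x:\ip{x}{w}\le \beta\}$ with $w\ne0$, by a supporting or separating hyperplane at a boundary point. For such a half-space the recession cone of $K$ lies in $\{v:\ip{v}{w}\le0\}$. A closed half-space of directions can only sit inside another one if the normals agree, so $w$ is a positive multiple of $u$; normalize $w=u$. Hence every supporting half-space of $K$ has outer normal $u$.

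Since $K$ is the intersection of its supporting half-spaces, $K=\{x:\ip{x}{u}\le a\}$ with $a=\sup_K\ip{x}{u}$. This supremum is finite because $K$ is proper and some supporting half-space exists. It is attained or approached from below, because $K$ is nonempty and $K+\{\ip{v}{u}\le0\}\subset K$.

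The main point to handle carefully is the first step: translating $S_eK=K$ into the recession condition, including fibers that are empty or full. If $K_y$ is nonempty, it is either $\R$ or $(-\infty,g]$, and in both cases it is closed under $t\mapsto t-s$ for $s\ge0$. If $K_y$ is empty, there is nothing to check. The case $\mu(K)=0$ needs a separate remark. Then every fiber has zero mass, so $S_eK=\emptyset\ne K$, which contradicts the hypothesis for a nonempty $K$. So $K$ automatically has positive measure, and no extra argument is needed.
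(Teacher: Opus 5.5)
Your proof is correct. Its first step matches the paper's. Downward-closed fibers of $S_eK$ give $-e\in\rec K$ whenever $\ip{e}{u}>0$, and closedness of $\rec K$ then yields $\{v:\ip{v}{u}\le0\}\subset\rec K$. The paper states this as $K+v\subset K$ for $\ip{v}{u}<0$, and then lets $w-\varepsilon u\to w$ for $w\perp u$.

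The endgames differ. The paper argues directly on $K$: $\pm w\in\rec K$ for all $w\in u^\perp$, so $K=(K\cap\R u)+u^\perp$. Then $K\cap\R u$ is a nonempty, closed, downward-closed subset of a line, and properness forces it to be $\{tu:t\le a\}$. You argue dually. Any closed half-space $\{\ip{x}{w}\le\beta\}\supset K$ forces $\rec K\subset\{\ip{v}{w}\le0\}$, hence $w\in(0,\infty)u$. Writing $K$ as the intersection of the closed half-spaces containing it gives $K=\{\ip{x}{u}\le h_K(u)\}$, and $h_K(u)<\infty$ because at least one such half-space exists.

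The paper's version is more elementary: it needs no separation theorem, only a one-dimensional reduction. Yours identifies the threshold directly as $h_K(u)$, and it matches the support-function and recession-cone argument the paper itself uses at the end of Section~\ref{sec:equality}.

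Your remark on $\mu(K)=0$ is not needed, since fibers of $S_eK$ are downward closed by definition whatever their mass. Also, ``every fiber has zero mass'' fails for some $e$, for instance a segment parallel to $e$. It holds only for $e$ transversal to the affine hull of $K$. Such $e$ exist with $\ip{e}{u}>0$, so your conclusion there still stands.
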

	
	\begin{proof}
		Every fiber of $S_eK$ is downward closed, so the identity $S_eK=K$ gives $K-te\subset K$ for all $t\ge0$. It follows that 
		$$K+v\subset K,\qquad \ip{v}{u}<0.$$
		If $w\perp u$, apply this to $w-\varepsilon u$ and let $\varepsilon\rightarrow0$. Closedness gives $K+w\subset K$. The same argument with $-w$ gives $K-w\subset K$. Thus,
		\begin{equation*}
			K+w=K,\qquad \forall w\in u^{\perp},
		\end{equation*}which means that $K$ is invariant under every translation in $u^\perp$.
		The intersection of $K$ with the line $\R u$ is non-empty, closed, and downward closed. Translation invariance and properness exclude the whole line. Hence this intersection is $\{tu:t\le a\}$ for some finite $a$, which finish the proof.
	\end{proof}
	
	Fix a minimizing triple $(K_0,K_1,K_2)\in\mathcal T_p$.
	For any unit vector $e$ with $\langle e,u\rangle>0$,
	Proposition \ref{symproperties} shows that
	$(S_eK_0,S_eK_1,S_eK_2)$ still belongs to $\mathcal T_p$.
	Minimality and Lemma~\ref{moment} give
	\[
	\mathcal J_u(K_0,K_1,K_2)
	\le \mathcal J_u(S_eK_0,S_eK_1,S_eK_2)
	\le \mathcal J_u(K_0,K_1,K_2).
	\]
	Each term in the sum is non-increasing, so equality of the sums forces
	$J_u(S_eK_i)=J_u(K_i)$ for every $i=0,1,2$.
	Consequently,
	$$S_eK_i=K_i,\qquad i=0,1,2,\quad \forall \ip{e}{u}>0.$$
	Since $0<p_i<1$, each component is nonempty
	and proper. Lemma~\ref{fixedpoints} yields
	\[
	K_i=H_{a_i}^u,\qquad i=0,1,2,
	\]
	for finite $a_i$. This proves that every minimizing triple
	consists of parallel halfspaces with the common orientation $u$.

	\subsection{Proof of \eqref{ehrhard} in Theorem \ref{main}}
	Suppose first that $A,B$ are closed and $0<\mu_{n,\nu}(A),\mu_{n,\nu}(B)<1$. Let $C=\overline{(1-\lambda)A+\lambda B}$, and we can assume $0<\mu_{n,\nu}(C)<1$. Fix $u\in S^{n-1}$, $p=(p_0,p_1,p_2)=\left(\mu_{n,\nu}(A),\mu_{n,\nu}(B),\mu_{n,\nu}(C)\right)$. Taking closures preserves the measures of convex sets, and
	\[
	(1-\lambda)\overline A+\lambda\overline B\subset C.
	\]
	Hence \((\overline A,\overline B,C)\in\mathcal T_p\) and shows that $\mathcal{T}_p$ is non-empty. Combining subsections
	\ref{existence} and \ref{minimizer structure},
	it provides a minimizer
	$(H_{a_0}^u,H_{a_1}^u,H_{a_2}^u)$.
	
	We record the marginal identity that converts half-space
	thresholds into quantiles. The beta integral
	\[
	\int_{\mathbb R^d}(c+|y|^2)^{-\beta}\,dy
	=\pi^{d/2}\frac{\Gamma(\beta-d/2)}{\Gamma(\beta)}
	c^{d/2-\beta},
	\qquad \beta>d/2,\quad c>0,
	\]
	with $d=n-1$, $c=\nu+t^2$, and $\beta=(n+\nu)/2$ gives
	\[
	\int_{u^\perp}\rho(tu+y)\,dy=\frac{\Gamma((\nu+1)/2)}{\sqrt{\nu\pi}\Gamma(\nu/2)}\left(1+\frac{t^2}{\nu}\right)^{-(\nu+1)/2}=T'_\nu(t).
	\]
	For $n=1$, this is understood as a zero-dimensional integral. Thus
	\[
	\mu(H_a^u)=T_\nu(a),\qquad a\in\mathbb R,
	\]
	which means that $a_i=T_{\nu}^{-1}(p_i)$. 
	
	Furthermore, the linearity property of half-spaces with the same direction gives
	\begin{equation}\label{linear of halfspace}
		(1-\lambda)H_{a_0}^u+\lambda H_{a_1}^u
		=H_{(1-\lambda)a_0+\lambda a_1}^u.
	\end{equation}
	And the inclusion shows that 
	\begin{equation}\label{inclusion for halfspace}
		(1-\lambda)H_{a_0}^u+\lambda H_{a_1}^u\subset H^u_{a_2}.
	\end{equation}
	Combining \eqref{linear of halfspace}, \eqref{inclusion for halfspace} and $a_i=T_{\nu}^{-1}(p_i)$, it deduces that
	\[
	(1-\lambda)T_{\nu}^{-1}(p_0)+\lambda T_{\nu}^{-1}(p_1)
	\le T_{\nu}^{-1}(p_2),
	\]
	which is precisely \eqref{ehrhard}.
	
	It remains to consider the endpoint cases. If \(\mu_{n,\nu}(A)=0\) or \(\mu_{n,\nu}(B)=0\), the right-hand side of the inequality is defined to be \(-\infty\), and the assertion is immediate. Now suppose that both measures are positive and, say \(\mu_{n,\nu}(A)=1\), then \(A=\mathbb R^n\). Since \(B\ne\varnothing\), we obtain
	\[
	(1-\lambda)\mathbb R^n+\lambda B=\mathbb R^n.
	\]
	Thus both sides of the desired inequality are \(+\infty\). The case \(\mu(B)=1\) is identical. \qed

	\begin{corollary}\label{gaussianlimit}
		Theorem~\ref{main} implies the Gaussian Ehrhard inequality \eqref{gaussian} for convex sets.
	\end{corollary}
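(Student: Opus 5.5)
The plan is to pass to the limit $\nu\to\infty$ in \eqref{ehrhard}, using the total variation convergence and the quantile convergence recorded in \eqref{intro-student-limit}. Fix convex sets $A,B\subset\R^n$ and $\lambda\in(0,1)$, and put $C=(1-\lambda)A+\lambda B$. This set is again convex, so Theorem~\ref{main} applies to the triple $A,B,C$ for every $\nu>0$. That gives
\[
T_\nu^{-1}\bigl(\mu_{n,\nu}(C)\bigr)\ge(1-\lambda)T_\nu^{-1}(\mu_{n,\nu}(A))+\lambda T_\nu^{-1}(\mu_{n,\nu}(B)).
\]
Total variation convergence yields $\mu_{n,\nu}(K)\to\gamma_n(K)$ for each of the three Borel sets $K=A,B,C$. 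Convex sets are Lebesgue measurable, and so is a Minkowski combination of convex sets, so there is no issue here.

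Before taking the limit, I would dispose of the degenerate cases separately. If $\gamma_n(A)=0$ or $\gamma_n(B)=0$, the right-hand side of \eqref{gaussian} equals $-\infty$ and there is nothing to prove. If $\gamma_n(A)=1$, then $A$ is a convex set of full Lebesgue measure, hence dense. Its interior is then all of $\R^n$, since a convex set with dense points has full interior. So $A=\R^n$, which forces $C=\R^n$, and both sides equal $+\infty$. The case $\gamma_n(B)=1$ is identical. We may therefore assume $\gamma_n(A),\gamma_n(B)\in(0,1)$. Then $\gamma_n(C)>0$. If $\gamma_n(C)=1$ the left side is $+\infty$ and we are done, so we may also assume $\gamma_n(C)\in(0,1)$.

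The main step is to justify $T_\nu^{-1}(p_\nu)\to\Phi^{-1}(p)$ whenever $p_\nu\to p\in(0,1)$. The statement \eqref{intro-student-limit} only gives this at a fixed $p$, so I need a locally uniform version. My first approach is monotonicity. Fix $\delta>0$ with $[p-\delta,p+\delta]\subset(0,1)$. For $\nu$ large we have $p-\delta\le p_\nu\le p+\delta$. Since $T_\nu^{-1}$ is increasing, this gives $T_\nu^{-1}(p-\delta)\le T_\nu^{-1}(p_\nu)\le T_\nu^{-1}(p+\delta)$. Apply the pointwise convergence at $p\pm\delta$, and then let $\delta\downarrow0$ using continuity of $\Phi^{-1}$. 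This yields the claim.

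Applying this convergence to all three quantile terms in the displayed inequality and letting $\nu\to\infty$ gives \eqref{gaussian} for convex $A$ and $B$. Measurability of $C$ is the only other point to check, and it holds because $C$ is convex.
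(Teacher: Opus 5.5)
Your proposal is correct and follows the paper's argument. You let $\nu\to\infty$ in \eqref{ehrhard} using total variation convergence $\mu_{n,\nu}\to\gamma_n$ and the convergence $T_\nu^{-1}(p_\nu)\to\Phi^{-1}(p)$ for $p_\nu\to p\in(0,1)$, after disposing of the zero- and full-measure cases. Your observation that a full-measure convex set is dense, hence equal to $\R^n$, is the ``convex-set argument'' the paper alludes to. The differences are minor. The paper proves the total variation convergence via pointwise convergence of densities and Scheff\'e's lemma, and brackets in the spatial variable using $T_\nu\to\Phi$. You instead take \eqref{intro-student-limit} as given and bracket in the probability variable using monotonicity of $T_\nu^{-1}$.
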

	
	\begin{proof}
		As $\nu\to\infty$, the density in \eqref{student} converges pointwise to the standard Gaussian density, by the gamma-ratio asymptotics and $(1+r/\nu)^{-\nu/2}\to e^{-r/2}$. Since the densities all integrate to one, Scheff\'e's lemma \cite{S47} gives convergence in total variation. Thus $\mu_{n,\nu}(K)\to\gamma_n(K)$ for every measurable $K$. Also $T_\nu\to\Phi$ pointwise, and strict monotonicity of $\Phi$ implies
		\[
		T_\nu^{-1}(p_\nu)\longrightarrow\Phi^{-1}(p)
		\quad\text{whenever}\ p_\nu\to p\in(0,1).
		\]
		This follows by bracketing $\Phi^{-1}(p)$ between two nearby points and using convergence of the distribution functions there. Apply these facts to \eqref{ehrhard} when all three Gaussian measures are in $(0,1)$. A sum of full Gaussian measure makes the desired inequality immediate; zero or full input measures are covered by the endpoint convention and the convex-set argument above.
	\end{proof}
	
	\section{Equality characterization}\label{sec:equality}
	This section gives the equality characterization.
	For a non-empty closed convex set $K$, we introduce the support function and recession cone
	\[
	h_K(v)=\sup_{x\in K}\ip{v}{x},\qquad
	\rec K=\{d:K+td\subset K\text{ for every }t\ge0\}.
	\]
	The recession cone $\rec K$ is a closed convex cone. Fibers are taken in direction $e$, with the notation of Section~\ref{sec:joint}.
	
	Before considering the equality characterization of Ehrhard’s inequality, we need to give a gap estimate between the rearranged set and the original set.
	\begin{lemma}[Gap estimate]\label{gap}
		Suppose $h_K(v)<\infty$ and $\ip{v}{e}>0$.
		\item (1). Every nonempty fiber of $K$ has a finite upper endpoint, bounded by
		\begin{equation}\label{upper}
			L_K(y)=\frac{h_K(v)-\ip{v}{y}}{\ip{v}{e}};
		\end{equation}
		\item (2). If $-e\in \rec K$, every non-empty fiber is the lower half-line;
		\item (3). If $-e\notin\rec K$, every nonempty fiber is bounded, and there is $\delta>0$ such that every positive-mass fiber satisfies
		\begin{equation}\label{uniformgap}
			g_K(y)\le L_K(y)-\delta\sqrt{\nu+|y|^2}.
		\end{equation}
	\end{lemma}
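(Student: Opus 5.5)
Item (1) is immediate from the definition of the support function. If $y+te\in K$, then $\ip{v}{y}+t\ip{v}{e}\le h_K(v)$. Since $\ip{v}{e}>0$, this gives $t\le L_K(y)$, so every non-empty fiber is bounded above by \eqref{upper}.

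For (2), suppose $-e\in\rec K$. Then $K-te\subset K$ for all $t\ge0$, so every non-empty fiber is downward closed. By (1) its supremum is finite, and the fiber is closed and convex. Hence it equals $(-\infty,s]$ with $s$ its upper endpoint.

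For (3), suppose $-e\notin\rec K$. Because $K$ is closed and convex, the recession cone is the same at every base point. So if one fiber were unbounded below, the ray in direction $-e$ would lie in $K$, giving $-e\in\rec K$. Hence no fiber is unbounded below, and together with (1) every non-empty fiber is a compact interval $[a(y),b(y)]$ with $b(y)\le L_K(y)$.

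The gap estimate \eqref{uniformgap} then comes from a scaled comparison. Put $c=c(y)=\sqrt{\nu+|y|^2}$ and rescale by \eqref{change of variable}: with $\alpha=a/c$, $\beta=b/c$ and $\ell=L_K(y)/c$, we get $g_K(y)=c\,q(\alpha,\beta)$. So it suffices to find $\delta>0$ with $q(\alpha,\beta)\le\ell-\delta$ uniformly over positive-mass fibers. Since $q$ is increasing in $\beta$ and decreasing in $\alpha$, and $\beta\le\ell$, we have $q(\alpha,\beta)\le q(\alpha,\ell)$. It remains to bound $\ell-q(\alpha,\ell)$ from below, and this requires a uniform lower bound on the scaled fiber length $\ell-\alpha$.

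The main obstacle is this uniform control. The key geometric input I would use is that $-e\notin\rec K$ forces the lower endpoint to stay a definite distance below $L_K(y)$, in the scaled sense. Suppose instead that $(L_K(y_j)-a(y_j))/c(y_j)\to\infty$ along a sequence. Normalizing the points $y_j+a(y_j)e$ and $y_j+L_K(y_j)e$ by $c(y_j)$ and passing to the limit should produce a recession direction of $K$ pointing along $-e$, up to a component in $e^\perp$ that must vanish. Making this contradiction rigorous is the delicate point; I expect to handle it through the compactness of the set of directions and the closedness of $\rec K$.

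With $\ell-\alpha\ge M$ not available directly, I would instead use the complementary inequality. The quantity $L(\ell)-L(q)=L(\alpha)$ is the mass of the scaled fiber below its lower endpoint. I would show that $L(\alpha)$ is bounded below whenever $\alpha\le\ell-\eta$, using the decay of $f$ together with $\ell$ lying in a controlled range. Finally, $q\le\ell-\delta$ follows from the mean value theorem applied to $L$ on $[q,\ell]$, with $f\le1$.

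If the scaling argument resists, a fallback is to reduce by a rotation within $e^\perp$ to the two-dimensional plane spanned by $e$ and $v$. There the lower boundary of $K$ is a convex function whose asymptotic slope differs from that of the upper supporting line $\ip{v}{x}=h_K(v)$, precisely because $-e\notin\rec K$. This gives a linear-in-$c$ gap between the two lines directly.
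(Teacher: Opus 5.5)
Your items (1) and (2) are fine. So are the boundedness of fibers in (3) and your closing step, $\ell-q(\alpha,\ell)\ge L(\ell)-L(q)=L(\alpha)$ because $f\le1$. The gap is the one estimate that closing step needs, namely $L(\alpha)\ge\delta$ uniformly. This is a uniform \emph{lower} bound $\alpha=a(y)/\sqrt{\nu+|y|^2}\ge-M$ on the scaled lower endpoint. Since $\ell$ is bounded, it is equivalent to an \emph{upper} bound on the scaled length $\ell-\alpha$.

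Most of your formulations aim at the opposite inequality:
\begin{itemize}
\item a lower bound on $\ell-\alpha$;
\item the lower endpoint staying ``a definite distance below $L_K(y)$'';
\item ``$L(\alpha)$ is bounded below whenever $\alpha\le\ell-\eta$''.
\end{itemize}
The last claim is false: $\alpha\to-\infty$ satisfies it while $L(\alpha)\to0$ and $q(\alpha,\ell)\to\ell$. A long scaled fiber is exactly what destroys the gap. The slab $K=\{x:0\le\ip{x}{v}\le1\}$ shows that the other claims fail too. There $-e\notin\rec K$, yet the lower boundary $\{\ip{x}{v}=0\}$ is parallel to the supporting hyperplane, the fiber length is the constant $1/\ip{v}{e}$, and $(L_K(y)-a(y))/\sqrt{\nu+|y|^2}\to0$. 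So your fallback (different asymptotic slopes, a linear-in-$c$ gap) is wrong. In addition, a general $K$ has no symmetry in $e^\perp$ that would allow a reduction to the plane of $e$ and $v$.

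The missing idea is a single separating hyperplane, which is what the paper uses.
\begin{itemize}
\item Take $x\in K$ and $r>0$ with $z=x-re\notin K$.
\item Let $p$ be the nearest point of $K$ to $z$ and set $w=z-p$. Then $h_K(w)=\ip{w}{p}<\infty$ and $\ip{w}{e}<0$.
\item Every $y+te\in K$ therefore satisfies $t\ge(h_K(w)-\ip{w}{y})/\ip{w}{e}\ge-M\sqrt{\nu+|y|^2}$, with $M$ independent of $y$.
\item Hence the scaled mass below each fiber is at least $\delta=\int_{-\infty}^{-M}(1+s^2)^{-(n+\nu)/2}\,ds$, and your last step finishes the proof.
\end{itemize}

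Your contradiction hypothesis $(L_K(y_j)-a(y_j))/c(y_j)\to\infty$ is the one formulation that points the right way, and it can be closed. It forces $a(y_j)/c(y_j)\to-\infty$. Normalize $x_j=y_j+a(y_j)e$ by $|x_j|$ rather than by $c(y_j)$. Then $|x_j|\to\infty$ and $x_j/|x_j|\to-e$, because $|y_j|\le c(y_j)=o(|a(y_j)|)$. Closedness and convexity then give $-e\in\rec K$, the desired contradiction. As submitted, however, this is left as an expectation and sits among statements of the opposite inequality, so your proof of (3) is incomplete.
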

	
	\begin{proof}
		For any $x\in K$, let $x=y+te$. The definition of support function gives 
		\begin{equation*}\label{gap support inequality}
			\ip{v}{y+te}\le h_K(v),
		\end{equation*} which is directly \eqref{upper}. And by the definition of the recession cone, (2) is true.
		
		We now prove (3). If $-e\notin\rec K$, choose $x\in K$ and $r>0$ with $z=x-re\notin K$. Let $p$ be the nearest point of $K$ to $z$, which means $h_K(z-p)=\ip{z-p}{p}$. Denote $w=z-p$, then
		\[
		\ip{w}{z}=\ip{z-p}{z}=\ip{z-p}{p}+\ip{z-p}{z-p}=h_K(w)+|z-p|^2>h_K(w).
		\]
		Since $x\in K$ and $z=x-re$, we have $\ip{w}{e}<0$. Using the same thing at \eqref{gap support inequality} by replacing $v$ to $w$, gives a lower bound on every fiber:
		\begin{equation*}
			\begin{split}
				t&\ge\frac{h_K(w)-\ip{w}{y}}{\ip{w}{e}}\\
				&\geq\frac{|w||y|+|h_K(w)|}{\ip{w}{e}}\\
				&\geq\frac{\left(|w|+\frac{|h_K(w)|}{\sqrt{v}}\right)}{\ip{w}{e}}\sqrt{v+|y|^2}
				=-M\sqrt{\nu+|y|^2}
			\end{split}
		\end{equation*}
		for some $M>0$ not depends on $y$. Together with \eqref{upper}, this shows that every nonempty fiber is bounded from above and below.
		
		Consider a positive mass fiber $[a,b]$. By the definition of $g_K$, the mass of $[g,b]$ equals to $(-\infty,a]$. By a rescaling, we deduce that
		\[
		\int_{\frac{g_K(y)}{\sqrt{\nu+|y|^2}}}^{\frac{b}{\sqrt{\nu+|y|^2}}}(1+t^2)^{-\beta}\dd t
		=\int_{-\infty}^{\frac{a}{\sqrt{\nu+|y|^2}}}(1+t^2)^{-\beta}\dd t
		\ge\int_{-\infty}^{-M}(1+t^2)^{-\beta}\dd t=:\delta>0.
		\]
		The integrand on the left is at most one, so $\frac{b-g_K(y)}{\sqrt{v+|y|^2}}\ge\delta$. Since $b\le L_K(y)$, this proves \eqref{uniformgap}.
	\end{proof}
	We next show the closeness when at least one of fiber of the set is closed interval.
	\begin{lemma}[Closedness of the rearranged sum]\label{closedsum}
		Suppose $h_A(v),h_B(v)<\infty$ and $\ip{v}{e}>0$. If $-e$ does not belong to at least one of $\rec A$ and $\rec B$, then
		\[
		(1-\lambda)S_eA+\lambda S_eB
		\]
		is closed.
	\end{lemma}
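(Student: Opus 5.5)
We take a sequence $z_j=(1-\lambda)x_j+\lambda w_j$ with $x_j\in S_eA$, $w_j\in S_eB$ and $z_j\to z$, and we show $z\in(1-\lambda)S_eA+\lambda S_eB$. Write $x_j=y_j+s_je$ and $w_j=y'_j+t_je$ with $y_j,y'_j\in e^\perp$. Since $S_eA$ and $S_eB$ are closed by Proposition~\ref{symproperties}(1), it suffices to extract subsequences along which $x_j$ and $w_j$ themselves converge. The limits then lie in $S_eA$ and $S_eB$, and $z$ is their weighted average.

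\emph{Upper bounds along $e$.} The fibers of $S_eA$ are either empty, whole lines, or half-lines $(-\infty,g_A(y)]$. A whole-line fiber of $S_eA$ would force the fiber $A_y$ to carry full mass, hence to be the whole line (up to a null set), which is impossible because $h_A(v)<\infty$ with $\ip{v}{e}>0$. The same holds for $B$. So $s_j\le g_A(y_j)\le L_A(y_j)$ and $t_j\le g_B(y'_j)\le L_B(y'_j)$ by Lemma~\ref{gap}(1). Because $L_A$ and $L_B$ are affine in $y$, both $s_j$ and $t_j$ are bounded above by affine functions of $y_j$ and $y'_j$.

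\emph{Bounding the transverse coordinates.} Taking $\ip{v}{\cdot}$ of $z_j$ gives
\[
\ip{v}{z_j}=(1-\lambda)\bigl(\ip{v}{y_j}+s_j\ip{v}{e}\bigr)+\lambda\bigl(\ip{v}{y'_j}+t_j\ip{v}{e}\bigr).
\]
Each bracket is at most $h_A(v)$, respectively $h_B(v)$. Since the left side converges, each bracket is also bounded below. Hence $\ip{v}{x_j}$ and $\ip{v}{w_j}$ are bounded. This does not yet bound $y_j$, because $\ip{v}{\cdot}$ only controls one direction.

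This is where the hypothesis enters. Say $-e\notin\rec A$. By Lemma~\ref{gap}(3), every nonempty fiber of $A$ is bounded below by $-M\sqrt{\nu+|y|^2}$, and $g_A(y)\le L_A(y)-\delta\sqrt{\nu+|y|^2}$. Combined with $s_j\le g_A(y_j)$, this gives
\[
\ip{v}{x_j}\le h_A(v)-\delta\ip{v}{e}\sqrt{\nu+|y_j|^2}.
\]
Since $\ip{v}{x_j}$ is bounded below, $|y_j|$ is bounded. Then $y'_j=(\pi z_j-(1-\lambda)y_j)/\lambda$ is bounded as well, where $\pi$ is the projection onto $e^\perp$.

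\emph{Bounding the $e$-coordinates.} With $y_j,y'_j$ bounded, the bound on $\ip{v}{x_j}$ bounds $s_j$, and the bound on $\ip{v}{w_j}$ bounds $t_j$ (using $\ip{v}{e}>0$). So $x_j$ and $w_j$ are bounded, and we extract convergent subsequences and conclude by closedness.

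The main obstacle is this second-to-last step: obtaining boundedness of the transverse coordinates from the single functional $\ip{v}{\cdot}$. It works only because of the uniform gap $\delta\sqrt{\nu+|y|^2}$ in \eqref{uniformgap}, which grows in $|y|$. If both $-e\in\rec A$ and $-e\in\rec B$, escape to infinity can occur and the sum may fail to be closed. One point needs care: $s_j$ lies in a positive-mass fiber of $S_eA$, so \eqref{uniformgap} applies, since zero-mass fibers of $S_eA$ are empty.
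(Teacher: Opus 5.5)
Your proposal is correct and follows essentially the same route as the paper: the affine fiber bound $\ip{v}{\cdot}\le h_K(v)$ on $S_eA$ and $S_eB$, the uniform gap $\delta\sqrt{\nu+|y|^2}$ from Lemma~\ref{gap}(3) for the set with $-e\notin\rec$ to bound the transverse coordinates, then bounds on the $e$-coordinates and passage to convergent subsequences in the closed sets $S_eA$, $S_eB$. The only difference is bookkeeping: you first bound $\ip{v}{x_j}$ and $\ip{v}{w_j}$ separately before invoking the gap, whereas the paper combines the two fiber bounds into the single inequality \eqref{boundeddecomposition}.
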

	
	\begin{proof}
		Let $-e\notin\rec B$, and use the constant $\delta$ from Lemma~\ref{gap} for $B$. Let 
		\[
		x_j=(1-\lambda)\left((y_{0})_j+(t_{0})_je\right)
		+\lambda\left((y_{1})_j+(t_{1})_je\right)\longrightarrow x,
		\]
		where $x_j\in(1-\lambda)S_eA+\lambda S_eB$, $(y_0)_j+(t_0)_je\in S_eA$ and $(y_1)_j+(t_1)_je\in S_eB$. We next prove that both limits of the two sequence still in $S_eA$ and $S_eB$. Write $x_j=y_j+t_je$. Since $x_j\rightarrow x$, thus $y_j$ and $t_j$ are bounded. Since fibers of $A$ and $B$ have positive mass, Lemma~\ref{gap} implies
		\[
		(t_{0})_j\le L_A((y_{0})_j),\qquad
		(t_{1})_j\le L_B((y_{1})_j)-\delta\sqrt{\nu+|(y_{1})_j|^2}.
		\]
		The two affine upper bounds have the same linear part of $y$. Therefore
		\begin{equation}\label{boundeddecomposition}
			\lambda\delta\sqrt{\nu+|(y_{1})_j|^2}
			\le\frac{(1-\lambda)h_A(v)+\lambda h_B(v)-\ip{v}{y_j}}
			{\ip{v}{e}}-t_j.
		\end{equation}
		The right-hand side is bounded, so $y_{1j}$ is bounded. The identity $y_j=(1-\lambda)(y_{0})_j+\lambda (y_{1})_j$ also bounds $(y_{0})_j$. The affine bounds now bound both $(t_{i})_j$ from above. Since their weighted average $t_j$ converges, both are bounded from below as well. After passage to a subsequence, the two summands converge in the closed sets $S_eA$ and $S_eB$. Thus $x\in (1-\lambda)S_e A+\lambda S_eB$.
	\end{proof}
	
	\begin{proof}[Proof of the equality characterization in Theorem~\ref{main}]
		
		Without loss of generality, we may assume that $A,B$ are closed. Suppose first that $0<\mu(A),\mu(B)<1$. Let equality holds in \eqref{ehrhard}. Denote $C=\overline{(1-\lambda)A+\lambda B}$. Equality implies $0<\mu(C)<1$.  Since $(1-\lambda)A+\lambda B\subset C$, for any unit vector $e$, we can deduce from Proposition~\ref{symproperties} that
		\begin{equation}\label{include ienquality in equality statement}
			\overline{(1-\lambda)S_eA+\lambda S_eB}\subset S_eC.
		\end{equation}
		On the other side, we apply the Ehrhard's inequality \eqref{ehrhard} on the sets $S_eA, S_eB$:
		\begin{equation*}
			\begin{split}
				T\circ\mu(({1-\lambda})S_eA+\lambda S_eB)&\geq (1-\lambda)T\circ\mu(S_eA)+ \lambda T\circ\mu(S_eB)\\
				&=(1-\lambda)T\circ\mu(A)+ \lambda T\circ\mu(B)\\
				&=T\circ\mu(C)=T\circ\mu(S_eC),
			\end{split}
		\end{equation*}
		which means that the measure of $(1-\lambda) S_eA+\lambda S_eB$ is bigger or equal to $S_eC$. Combining with \eqref{include ienquality in equality statement}, we have
		\begin{equation}\label{equalityclosure}
			\overline{(1-\lambda)S_eA+\lambda S_eB}=S_eC.
		\end{equation}
		
		Since $C$ is closed and $\mu(C)<1$, there exists a unit vector $v$ with $h_C(v)<\infty$. By the definition of support function,
		\[
		h_C(v)=(1-\lambda)h_A(v)+\lambda h_B(v).
		\]
		This shows that $h_A(v),h_B(v)<+\infty$. We next  claim that:
		
		\textbf{Claim}: Fix any unit vector $e$ with $\ip{v}{e}>0$. Then
		\begin{equation}\label{dichotomy}
			A=B\quad\text{or}\quad -e\in\rec A\cap\rec B.
		\end{equation}
		
		We prove the claim. Suppose first that $-e\in\rec A$ but $-e\notin\rec B$. From Lemma~\ref{closedsum}, \eqref{equalityclosure} becomes
		\begin{equation}\label{no bar equal}
			(1-\lambda)S_eA+\lambda S_eB=S_eC.
		\end{equation} We choose any positive-mass fiber of $C$, and let $x=y+te$. By \eqref{upper} in Lemma~\ref{gap}, the common supporting bound $L_C(y)$ makes its rearranged upper endpoint $g_C(y)$ finite. By \eqref{no bar equal}, The boundary point has an actual representation
		\begin{equation}\label{actualrep}
			\begin{split}
				y+g_C(y)e=(1-\lambda)(y_0+t_0e)+\lambda(y_1+t_1e),\\
				y_0+t_0e\in S_eA,\qquad y_1+t_1e\in S_eB.
			\end{split}
		\end{equation}
		Since $-e\in\rec A\setminus\rec B$, by the definition of recession cone, the fibers have the following form
		\[
		A_{y_0}=(-\infty,b_0],\qquad B_{y_1}=[a_1,b_1],\qquad a_1<b_1.
		\]
		Note that $g_B(y_1)<b_1$, because the symmetrization transfers part of its mass to negative part. Hence \eqref{actualrep} gives
		\[
		g_C(y)=(1-\lambda)t_0+\lambda t_1\leq(1-\lambda)b_0+\lambda g_B(y_1)
		<(1-\lambda)b_0+\lambda b_1.
		\]
		On the other hand, the Minkowski inclusion $(1-\lambda)A+\lambda B\subset C$ gives
		\[
		(-\infty,(1-\lambda)b_0+\lambda b_1]\subset C_y,
		\]
		which is a contradiction.
		
		Suppose next that $-e$ belongs to neither recession cone, and we need to prove $A=B$. Again \eqref{actualrep} holds for every positive-mass fiber of $C$, and both fibers of $A$ and $B$ are finite intervals $[a_0,b_0]$ and $[a_1,b_1]$ with $a_i<b_i$. Let $a_\lambda=(1-\lambda)a_0+\lambda a_1$ and $b_\lambda=(1-\lambda)b_0+\lambda b_1$. Thus,
		\begin{align*}
			g_C(y)&\ge g(y,a_\lambda,b_\lambda)\\
			&\ge(1-\lambda)g(y_0,a_0,b_0)+\lambda g(y_1,a_1,b_1)\\
			&\ge(1-\lambda)t_0+\lambda t_1=g_C(y),
		\end{align*}
		here the second inequality is by the joint concavity in Lemma~\ref{joint}. So all inequalities are equalities. Strict joint concavity again in Lemma \ref{joint} implies
		\[
		(y_0,a_0,b_0)=(y_1,a_1,b_1).
		\]
		In particular $y_0=y_1=y$, and $A_y=B_y=[a_0,b_0]$. This interval is contained in $C_y$ and has the same fiber measure as $C_y$, since its rearranged endpoint equals $g_C(y)$. Convexity and positivity of the density imply $$C_y=A_y=B_y,$$
		which gives $A=B=C$. This finish the proof of \textbf{Claim}.
		
		If $A\ne B$, from \textbf{Claim}, we have $e\in\rec A\cap\rec B$ for every $\ip{v}{e}>0$. Since recession cones are closed cones, we deduce that
		\[
		\{d:\ip{v}{d}\le0\}\subset\rec A\cap\rec B.
		\]
		In particular, both sets are invariant under all translations in $v^\perp$, and $K-tv\subset K$ for $K=A,B$ and $t\ge0$. Since $h_A(v),h_B(v)<+\infty$, this means
		\[
		A=\{x:\ip{v}{x}\le h_A(v)\},\qquad
		B=\{x:\ip{v}{x}\le h_B(v)\}.
		\]
		We complete the necessity for closed set and $\mu(A),\mu(B)\in(0,1)$, and the sufficiency can be obtained directly by computation. 
		
		For arbitrary convex sets whose measures lie in $[0,1]$, it is easy to see that, if $\mu(A)=0$, then  $\intr(A+B)=\emptyset$; if $\mu(A)=1$, then $A=\mathbb{R}^n$. And when $A,B$ are convex bodies, the case (1), (2) and (4) will not happen, then $A=B$. We finish the whole proof.
	\end{proof}

	\Needspace{8\baselineskip}
	\section{lower deviations on convex observables}
	\label{prob-section}
	
	In this section, we set $X\sim\mu_{n,\nu}$, $Z$ have distribution function $T_\nu$, and $f:\mathbb{R}^n\rightarrow\mathbb{R}$ is convex. We first show that high dimensional convex observable $f(X)$ has the same distribution with $g(Z)$ for some convex, non-decreasing \(g\).   
	
	\begin{theorem}[Distributions of convex observables]\label{prob-representation}
		For every convex $f:\R^n\to\R$, there is a non-decreasing convex $g:\R\to\R$ such that
		\begin{equation}\label{prob-representation-eq}
			f(X)\stackrel{\mathrm d}=g(Z).
		\end{equation}
		Conversely, every such distribution is realized by $f(x)=g(x_1)$.
	\end{theorem}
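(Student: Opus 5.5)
Let $F(s)=\mathbb P(f(X)\le s)$ and $\psi(s)=T_\nu^{-1}(F(s))$, with values in $[-\infty,+\infty]$. We take $g$ to be the generalized inverse of $\psi$, namely the monotone (quantile) rearrangement
\[
g(z)=\inf\{s\in\R:\ \psi(s)\ge z\}=\inf\{s:\ F(s)\ge T_\nu(z)\},\qquad z\in\R.
\]
This is the quantile function of $f(X)$ composed with $T_\nu$. Since $T_\nu$ is continuous and strictly increasing, $Z\mapsto g(Z)$ has the law of $f(X)$: this is the standard fact that $F^{-1}(U)\sim F$ for $U=T_\nu(Z)$ uniform. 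The non-decreasing property is immediate. Finiteness of $g$ holds because $0<T_\nu(z)<1$ and $f$ is real valued, so the quantiles of $f(X)$ are finite.

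Convexity of $g$ comes from concavity of $\psi$. The sublevel sets $K_s=\{f\le s\}$ are convex, and convexity of $f$ gives
\[
(1-\lambda)K_{s_0}+\lambda K_{s_1}\subset K_{(1-\lambda)s_0+\lambda s_1}.
\]
Theorem~\ref{main}, applied with $\mu_{n,\nu}(K_s)=F(s)$, then yields
\[
\psi((1-\lambda)s_0+\lambda s_1)\ge(1-\lambda)\psi(s_0)+\lambda\psi(s_1)
\]
whenever the right side is finite, so $\psi$ is concave on $I=\{s: 0<F(s)<1\}$. If $f$ is constant the claim is trivial. Otherwise we check two things. First, $F(s)<1$ for every $s$: a non-constant convex $f$ is unbounded above along some ray, so $\{f>s\}$ contains an open set. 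Second, $F$ is continuous and strictly increasing on the interval $I$: level sets of a non-constant convex function have measure zero except possibly the minimum level, and we can treat the minimal value $s_*=\min f$ separately. On $I$ the function $\psi$ is then a finite, concave, strictly increasing function with values up to $+\infty$. Its inverse, which coincides with $g$ on $\psi(I)$, is therefore convex and increasing.

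The main obstacle is the boundary behaviour. Let $z_*=\inf\psi(I)$. If $F(s_*)=p_*>0$, that is, $f$ attains its minimum on a set of positive measure, then $g\equiv s_*$ on $(-\infty,T_\nu^{-1}(p_*)]$. We have to check that gluing this constant piece to the convex increasing inverse keeps $g$ convex. This holds because the right derivative of the inverse is $\ge0$ at the junction. If $p_*=0$ but $z_*>-\infty$ is impossible, the inverse already covers all of $\R$. To prove the latter we use concavity of $\psi$: a concave increasing function on an interval $(\inf I,\infty)$ with $F\to0$ forces $\psi\to-\infty$ at the left end. Consequently $\psi(I)=(z_*,\infty)$, because concave increasing functions tend to $+\infty$ only if unbounded. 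Should $\psi$ be bounded above, then $g$ would equal $+\infty$ somewhere, contradicting $F<1$ everywhere. We handle this by observing that $F(s)\to1$ as $s\to\infty$, so $\psi\to+\infty$.

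For the converse, given a non-decreasing convex $g$, the function $f(x)=g(x_1)$ is convex on $\R^n$ as the composition of a convex function with a linear map. By the marginal identity of Section~\ref{sec:variational}, the first coordinate $x_1$ under $\mu_{n,\nu}$ has law $T_\nu$, so $f(X)\stackrel{\mathrm d}=g(Z)$.
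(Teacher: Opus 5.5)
Your proposal is correct and follows essentially the same route as the paper. Theorem~\ref{main} applied to the sublevel sets $K_s$ gives concavity of $\psi=T_\nu^{-1}\circ F$ on $(\inf f,\infty)$; inverting gives a convex increasing function, you glue a constant piece when $f$ has an atom at its minimum, and you use the quantile representation $g=F^{-1}\circ T_\nu$, with the converse via the $x_1$-marginal. Your boundary paragraph is muddled, and you assert without proof the null-level-set and strict-monotonicity facts that the paper verifies, but none of this is a genuine gap. In that paragraph, $\psi\to-\infty$ needs no concavity since it is just $T_\nu^{-1}(p)\to-\infty$ as $p\to0$, and a bounded $\psi$ would contradict $F(s)\to1$ rather than $F<1$.
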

	
	\begin{proof}
		The assertion is immediate when $f$ is constant. Otherwise, put $Y=f(X)$, $F(t)=\mathbb P(Y\le t)$ and $\ell=\inf f$. A finite nonconstant convex function on $\R^n$ is continuous and unbounded above. For $t>\ell$, the convex set $K_t=\{f\le t\}$ has nonempty interior, and $\{f=t\}\subset\partial K_t$. Indeed, choose $x_0$ with $f(x_0)<t$. If $f(x)=t$ at an interior point of $K_t$, the segment from $x_0$ through $x$ can be extended slightly within $K_t$, contradicting convexity. Since the boundary of a convex set with nonempty interior is Lebesgue null, $F$ is continuous on $(\ell,\infty)$. If $\ell<s<t$, continuity of $f$ and its interval-valued range give an open set on which $s<f<t$. Since the density is positive, so $F(s)<F(t)$.
		
		The inclusion
		\[
		(1-\lambda)K_s+\lambda K_t\subset K_{(1-\lambda)s+\lambda t}
		\]
		and Ehrhard's inequality in Theorem~\ref{main} show that 
		$$t\rightarrow T_\nu^{-1}\circ F(t)$$
		is concave on $(\ell,\infty)$. If $\ell$ is finite, let $\alpha=\mathbb P(Y=\ell)$; otherwise put $\alpha=0$. The continuous strictly increasing function $T^{-1}_\nu\circ F$ has range $(T_\nu^{-1}(\alpha),\infty)$, and its inverse is increasing and convex. When $\alpha>0$, extend this inverse by the constant $\ell$ to the left of $T_\nu^{-1}(\alpha)$. Its right limit at that endpoint is $\ell$ and its slopes are nonnegative, so the extension remains convex. When $\alpha=0$, the inverse is already defined on $\R$.
		Choose $g$
		\[
		g:=F^{-1}\circ T_\nu,\qquad\text{where}\quad F^{-1}(u)=\inf\{t:F(t)\ge u\},\quad 0<u<1.
		\]
		Thus $g$ is finite, non-decreasing and convex. And
		\begin{equation*}
			\begin{split}
				\mathbb{P}(g(Z)\leq t)&=\mathbb{P}(F^{-1}(T_\nu(Z))\leq t)\\
				&=\mathbb{P}(T_\nu(Z)\leq F(t))\\
				&=\mathbb{P}(Z\leq T^{-1}_\nu\circ F(t))=T_\nu(T_\nu^{-1}(F(t)))\\
				&=F(t)=\mathbb{P}(f(X)\leq t),
			\end{split}
		\end{equation*}gives \eqref{prob-representation-eq}. Conversely, the first marginal of $X$ has distribution $T_\nu$, so $f(x)=g(x_1)$ realizes every such law.
	\end{proof}
	
	\begin{remark}\label{g(a)=y}
		For later use, if $g$ is non-constant and $0<\mathbb P(g(Z)\le y)<1$, the set $\{z:g(z)\le y\}$ is a closed half-line $(-\infty,a]$ and $g(a)=y$. Indeed, monotonicity makes it a lower interval, continuity closes it, and the two probability conditions make its endpoint finite. Continuity and the definition of the endpoint then give $g(a)=y$. Consequently,
		\begin{equation}\label{prob-threshold}
			a=T_\nu^{-1}(\mathbb P(g(Z)\le y)),\qquad g(a)=y.
		\end{equation}
	\end{remark}
	
	\subsection{The mean--variance bound}\label{sec-mean}
	\begin{proof}[Proof of Theorem~\ref{prob-mean}]
		We first proof the stability. By Theorem~\ref{prob-representation}, there is a non-decreasing convex function $g:\mathbb{R}\rightarrow\mathbb{R}$ such that $f(X)\overset{\mathrm d}=g(Z)$, where $Z$ follows the one-dimensional Cauchy-type distribution. By the assumption $\mathbb{P}(Y\leq y)>0$ and Remark \ref{g(a)=y}, $g(a)=y$. By the definition of Wasserstein distance $W_2$, 
		\begin{equation}\label{Wasserstein distance}
			\begin{split}
				W^2_2(\mathcal{L}(Y),&\mathcal{L}(Y^*))\leq \mathbb E|Y-Y^*|^2\\
				&=\mathbb E\left|Y-y-\frac{\mathbb EY-y}{\mathbb E(Z-a)_+}(Z-a)_+\right|^2\\
				&=\mathbb E(Y-y)^2-2\mathbb E\left(\frac{\mathbb EY-y}{\mathbb E(Z-a)_+}(Y-y)(Z-a)_+\right)+\frac{(\mathbb EY-y)^2}{(\mathbb E(Z-a)_+)^2}\mathbb E(Z-a)_+^2\\
				&=\operatorname{Var}(Y)+(\mathbb EY-y)^2-2\frac{\mathbb EY-y}{\mathbb E(Z-a)_+}\mathbb E\left((Y-y)(Z-a)_+\right)+\frac{(\mathbb EY-y)^2}{(\mathbb E(Z-a)_+)^2}\mathbb E(Z-a)_+^2.\\
			\end{split}
		\end{equation}
		From the above equation, we should estimate $\mathbb E((Y-y)(Z-a)_+)$. More precisely, we should derive a lower bound for it. 
		
		Consider the probability measure $d\mu(z)=\frac{z-a}{\mathbb E(z-a)_+}dT_\nu(z)$, and its domain is $(a,\infty)$. Since $g$ is non-decreasing convex function, the function 
		\begin{equation*}
			\frac{Y-y}{z-a}=\frac{g(z)-g(a)}{z-a}
		\end{equation*}
		is non-decreasing and non-negative on $(a,+\infty)$. Notice that $z-a$ also has this property. By Chebyshev covariance formula, the covariance between these two Random variable on $\mu$ should bigger than $0$:
		\begin{equation*}
			\operatorname{Cov}_\mu\left(\frac{Y-y}{(z-a)_+},z-a\right)\geq0,
		\end{equation*}
		which is equivalent to
		\begin{equation}\label{covariance formula}
			\mathbb E_\mu\left(Y-y\right)\geq \mathbb E_\mu\left(\frac{Y-y}{(z-a)_+}\right)\mathbb E_\mu(z-a).
		\end{equation}
		We need to prove that these expectations are all finite, and we use the Tail-integral formula:
		\begin{equation*}
			\begin{split}
				\mathbb E_\mu(Y-y)&=\int_{a}^{+\infty}(Y-y)\frac{z-a}{\mathbb E(z-a)_+}dT_\nu(z)=\frac{\mathbb E((Y-y)(z-a)_+)}{\mathbb E(z-a)_+}<+\infty,\\
				\mathbb E_\mu\left(\frac{Y-y}{(z-a)_+}\right)&=\int_a^{+\infty}\frac{Y-y}{(z-a)_+}\cdot\frac{z-a}{\mathbb E(z-a)_+}dT_\nu(z)=\frac{\mathbb E(Y-y)_+}{\mathbb E(z-a)_+}<+\infty,\\
				\mathbb E_\mu(z-a)&=\int_a^{+\infty}(z-a)\frac{z-a}{\mathbb E(z-a)_+}dT_\nu(z)=\frac{\mathbb E(z-a)_+^2}{\mathbb E(z-a)_+}<+\infty.
			\end{split}
		\end{equation*}
		The finiteness of the last two expectations follows directly from $\nu>2$ and $\operatorname{Var}Y\in(0,\infty)$, and for the first one, we use the Cauchy-Schwarz inequality:
		\begin{equation*}
			\mathbb E((Y-y)(z-a)_+)^2\leq \mathbb E(Y-y)^2\mathbb E(z-a)_+^2<+\infty.
		\end{equation*}
		Taking these expression into \eqref{covariance formula}, we deduce that
		\begin{equation}
			\label{covariance inequality after tail}
			\mathbb E((Y-y)(z-a)_+)\geq \frac{\mathbb E(Y-y)_+\mathbb E(z-a)_+^2}{\mathbb E(z-a)_+}=\frac{\mathbb EY-y}{\mathbb E(z-a)_+}\mathbb E(z-a)_+^2.
		\end{equation}
		Finally, we bring \eqref{covariance inequality after tail} into \eqref{Wasserstein distance} and deduce that
		\begin{equation*}
			\begin{split}
				W^2_2(\mathcal{L}(Y),\mathcal{L}(Y^*))&\leq \operatorname{Var}Y+(\mathbb EY-y)^2\\
				&-2\frac{\mathbb EY-y}{\mathbb E(Z-a)_+}\cdot\frac{\mathbb EY-y}{\mathbb E(Z-a)_+}\mathbb E(Z-a)^2_++\frac{(\mathbb EY-y)^2}{(\mathbb E(Z-a)_+)^2}\mathbb E(Z-a)_+^2\\
				&=\operatorname{Var}Y+(\mathbb EY-y)^2-\frac{(\mathbb EY-y)^2}{(\mathbb E(Z-a)_+)^2}\mathbb E(Z-a)_+^2\\
				&=\operatorname{Var}Y+\frac{(\mathbb EY-y)^2\operatorname{Var}(Z-a)_+}{(\mathbb E(Z-a)_+)^2},
			\end{split}
		\end{equation*}
		which is exactly the stability \eqref{prob-W2}.
		
		We next prove the lower deviations inequality \eqref{prob-optimal-mean} by using stability \eqref{prob-W2}. Consider the coefficient function
		\begin{equation*}
			R(a):=\frac{\mathbb E(Z-a)_+}{\sqrt{\operatorname{Var}(Z-a)_+}}, \quad a\in\mathbb{R}.
		\end{equation*}
		Set $y=\mathbb EY-R(a)\sqrt{\operatorname{Var}Y}$, and \eqref{prob-optimal-mean} is equivalent to 
		\begin{equation}\label{equivalent form of 1.11}
			\mathbb{P}(Y\leq y)\leq T_\nu(a).
		\end{equation}
		One can see that this $y$ satisfies that $y<\mathbb EY$ because of $R(a),\operatorname{Var}Y>0$. Without loss of generality, we can assume that $\mathbb{P}(Y\leq y)>0$. Due to the duplication of notation, we redefine $b=T_\nu^{-1}(\mathbb{P}(Y\leq y))$. Taking $y$ and $b$ into \eqref{prob-W2}, we deduce that
		\begin{equation*}
			0\leq W^2_2(\mathcal{L}(Y),\mathcal{L}(Y^*))\leq \operatorname{Var}Y-\frac{(\mathbb EY-y)^2}{R(b)^2}=\operatorname{Var}Y-\operatorname{Var}Y\left(\frac{R(a)}{R(b)}\right)^2.
		\end{equation*}
		Since $\operatorname{Var}Y>0$, we have
		\begin{equation*}
			R(b)\geq R(a).
		\end{equation*}
		\textbf{Claim}: $R$ is a strictly decreasing function, and its range is $(0,+\infty)$. 
		
		If the claim is true, then $b\leq a$. Combining the fact that $T_\nu$ is strictly increasing, we have
		\begin{equation*}
			P(Y\leq y)=T_\nu(b)\leq T_\nu(a),
		\end{equation*}
		which is \eqref{equivalent form of 1.11}. And for the equality characterization, the necessity can be deduced from the strict monotonicity of $R$ and $T_\nu$ that $a=b$, which means that in \eqref{prob-W2} we have
		\begin{equation*}
			0\leq \mathbb E\left|Y-y-\frac{\mathbb EY-y}{\mathbb E(Z-a)_+}(Z-a)_+\right|^2\leq0.
		\end{equation*}
		That is to say, $Y=g(Z)=y+c(Z-a)_+$ almost surely. On the other side, if $Y=g(Z)=y+c(Z-a)_+$ almost surely for some $y\in\mathbb{R}$ and $c>0$, then 
		\begin{equation*}
			\begin{split}
				EY&=y+cE(Z-a)_+,\\
				\operatorname{Var}Y&=c^2\operatorname{Var}(Z-a)_+.
			\end{split}
		\end{equation*}
		Then
		\begin{equation*}
			\mathbb{P}\left(Y\leq EY-\frac{E(Z-a)_+}{\sqrt{\operatorname{Var}(Z-a)_+}}\sqrt{\operatorname{Var}Y}\right)=\mathbb{P}(Y\leq y)=\mathbb{P}(Z\leq a)=T_\nu(a).
		\end{equation*}
		This finish the equality characterization.
		
		Finally, we only need to prove the Claim. Again using the Tail-integral formula, 
		\begin{equation*}
			\begin{split}
				\mathbb E(Z-a)_+&=\int_{a}^{+\infty}\mathbb{P}(Z>u)du,\\
				\mathbb E(Z-a)_+^2&=2\int_a^\infty(u-a)\mathbb{P}(Z>u)du.
			\end{split}
		\end{equation*}
		And the first derivative
		\begin{equation*}
			\begin{split}
				\frac{d}{da}\mathbb E(Z-a)_+&=-\mathbb{P}(Z>a),\\
				\frac{d}{da}\mathbb E(Z-a)_+^2&=-2\int_a^\infty\mathbb{P}(Z>u)du=-2\mathbb E(Z-a)_+.
			\end{split}
		\end{equation*}
		By the direct calculation, 
		\begin{equation*}
			R'(a)=\frac{(\mathbb E(Z-a)_+)^2-\mathbb{P}(Z>a)\mathbb E(Z-a)_+^2}{\operatorname{Var}(Z-a)_+^{\frac{3}{2}}}.
		\end{equation*}
		We only need to prove that the numerator is less than or equal to $0$, and it can be deduced by Cauchy-Schwarz inequality:
		\begin{equation*}
			\begin{split}
				(\mathbb E(Z-a)_+)^2&=\left(\int_{\mathbb{R}}1_{\{Z>a\}}(Z)(Z-a)_+dT_\nu(Z)\right)^2\\
				&<\int_{\mathbb{R}}1_{\{Z>a\}}(Z)dT_\nu(Z)\int_{\mathbb{R}}(Z-a)_+^2dT_\nu(Z)\\
				&=\mathbb{P}(Z>a)\mathbb E(Z-a)_+^2,
			\end{split}
		\end{equation*}
		and the second inequality is strict because, by the equality characterization of the Cauchy-Schwarz inequality, equality holds iff $1_{\{Z>a\}}=(Z-a)_+$, which also means that $Z$ is a constant. This is a contradiction. Thus we prove that $R$ is strictly decreasing.
		
		Moreover, the range of $R$ is $(0,+\infty)$, Indeed,
		\begin{equation*}
			\begin{split}
				R(a)^2&=\frac{(\mathbb E(Z-a)_+)^2}{\operatorname{Var}(Z-a)_+}\\
				&\leq\frac{\mathbb{P}(Z>a)\mathbb E(z-a)_+^2}{\mathbb E(Z-a)^2_+-\mathbb{P}(Z>a)\mathbb E(Z-a)_+^2}\\
				&=\frac{\mathbb{P}(Z>a)}{\mathbb{P}(Z\leq a)}\rightarrow0
			\end{split}
		\end{equation*}
		as $a\rightarrow+\infty$. The second inequality is from the Cauchy-Schwarz inequality. On the other side, when $a\rightarrow-\infty$, we have
		\begin{equation}\label{L2 convergence}
			(Z-a)_++a=\max(Z,a)\rightarrow Z\quad\text{in}\quad L^2.
		\end{equation}
		This is because, when $Z\geq a$,
		\begin{equation*}
			\max(Z,a)-Z=0,
		\end{equation*}
		and when $Z<a$, and we can of course assume $a<0$, then
		\begin{equation*}
			|\max(Z-a)-Z|^2=(Z-a)^2\leq Z^2.
		\end{equation*}
		Since $Z$ has finite 2 moment from $\nu>2$, by Dominated Convergence Theorem, we have
		\begin{equation*}
			\mathbb E|\max(Z,a),Z|^2\leq \mathbb E\left(Z^21_{\{Z<a\}}\right)\rightarrow0\quad \text{as}\quad a\rightarrow-\infty.
		\end{equation*}
		Since $\mathbb E(Z)=0$ and $\operatorname{Var}(Z)>0$, this directly means that
		\begin{equation*}
			\begin{split}
				\mathbb E(Z-a)_+&\rightarrow-a\rightarrow+\infty,\\
				\operatorname{Var}(Z-a)_+&=\operatorname{Var}\left((Z-a)_++a\right)\rightarrow\operatorname{Var}Z>0.
			\end{split}
		\end{equation*}
		Thus $R(a)\rightarrow+\infty$ as $a\rightarrow-\infty$. By the continuity of the function, the range is indeed $(0,+\infty)$. We prove the claim and we finish the whole proof. 
	\end{proof}
	
	\subsection{The centered-moment bound}\label{sec-moment}
	\begin{proof}[Proof of Theorem \ref{prob-moment}]
		We divide the proof into four steps.
		
		\medskip
		\noindent
		\textbf{Step 1. Reduction to \(Y=g(Z)\) and identification of the median.}
		
		By Theorem \ref{prob-representation}, there exists a finite, non-decreasing convex function
		\(g:\mathbb R\to\mathbb R\) such that $f(X)\overset{\mathrm d}=g(Z).$ By the symmetry of \(Z\) and monotonicity of \(g\),
		\[
		\mathbb P(g(Z)\le g(0))\ge\frac12,
		\qquad
		\mathbb P(g(Z)\ge g(0))\ge\frac12.
		\]
		Thus \(g(0)\) is the median, and the median is unique. Indeed, if \(u<g(0)\), continuity of \(g\)
		provides \(\delta>0\) such that \(g(-\delta)>u\). Consequently,
		\[
		\mathbb P(g(Z)\le u)
		\le T_\nu(-\delta)<\frac{1}{2},
		\]
		which contradicts to the definition of median. The other direction can be discussed similarly. Thus $m=g(0).$
		
		\medskip
		\noindent
		\textbf{Step 2. Choice of \(b>0\) and reduction to a half-line probability.}
		
		If
		\[
		\mathbb P\!\left(
		Y\le m-\frac{\|Y-m\|_q}{\|\max(Z,-r)\|_q}r
		\right)=0,
		\]
		there is nothing to prove. We therefore assume
		that this probability is positive.
		Since \(g\) is continuous and non-decreasing, the set
		\[
		\left\{
		Z\in\mathbb R:
		g(Z)\le m-\frac{\|Y-m\|_q}{\|\max(Z,-r)\|_q}r
		\right\}
		\]
		is a non-empty closed lower half line. Due to the symmetry of the Cauchy-type measure, the definition of the median and $r>0$, its right endpoint
		is therefore strictly negative. Hence there exists \(b>0\) such
		that this set is exactly $(-\infty,-b].$
		And at the endpoint, we have
		\begin{equation}\label{eq:four-step-endpoint}
			g(-b)
			=
			m-\frac{\|Y-m\|_q}{\|\max(Z,-r)\|_q}r.
		\end{equation}
		
		After determining this \(b\), we can immediately that
		\begin{equation}\label{eq:four-step-half-line}
			\begin{aligned}
				\mathbb P\!\left(
				Y\le m-\frac{\|Y-m\|_q}{\|\max(Z,-r)\|_q}r
				\right)=\mathbb P(Z\le-b)=T_\nu(-b).
			\end{aligned}
		\end{equation}
		Since $T_\nu$ is strictly increasing, to prove \eqref{prob-optimal-moment}, we only need to prove
		\[
		b\ge r.
		\]
		
		\medskip
		\noindent
		\textbf{Step 3. The geometric intuition of the convex \(g\).}
		
		\begin{figure}[htbp]
			\centering
			\includegraphics[width=0.6\textwidth]{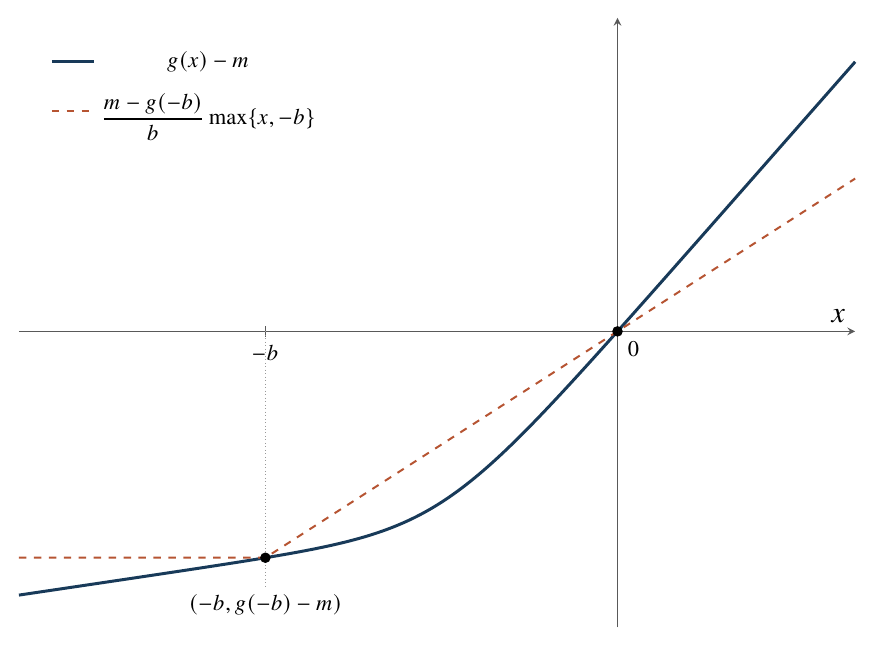}
			\caption{geometric illustration of \ref{eq:four-step-pointwise}}
			\label{fig:myfigure}
		\end{figure}
		In this step, we first fix \(Z\), and we will prove the following geometric inequality:
		\begin{equation}\label{eq:four-step-pointwise}
			|g(x)-m|
			\ge
			\frac{r\|Y-m\|_q}
			{b\|\max(Z,-r)\|_q}
			\,|\max(x,-b)|,
			\qquad x\in\mathbb R,
		\end{equation}
		and its geometric illustration is shown in Figure \ref{fig:myfigure}.
		The coefficient in \eqref{eq:four-step-pointwise} is the slope of
		the chord joining \((-b,g(-b)-m)\) to \((0,0)\):
		\[
		\frac{m-g(-b)}b
		=
		\frac{r\|Y-m\|_q}
		{b\|\max(Z,-r)\|_q}.
		\]
		We prove the comparison on three intervals:
		
		(1). If \(x\le-b\), monotonicity and
		\eqref{eq:four-step-endpoint} give
		\[
		g(x)-m
		\le g(-b)-m
		=
		-\frac{r\|Y-m\|_q}{\|\max(Z,-r)\|_q}.
		\]
		
		(2). If \(-b\le x\le0\), convexity makes the graph of \(g\) below
		the straight line from \((-b,g(-b))\) to \((0,g(0))\). Thus
		\[
		g(x)
		\le
		\frac{-x}{b}g(-b)
		+
		\left(1+\frac xb\right)g(0).
		\]
		Using \(g(0)=m\) and \eqref{eq:four-step-endpoint}, we obtain
		\[
		g(x)-m
		\le
		\frac{r\|Y-m\|_q}
		{b\|\max(Z,-r)\|_q}\,x
		\le0.
		\]
		Taking absolute values proves
		\eqref{eq:four-step-pointwise} on this interval.
		
		(3). If \(x>0\), since $g$ is convex, we deduce that
		\[
		\frac{g(x)-g(0)}x
		\ge
		\frac{g(0)-g(-b)}b
		=
		\frac{r\|Y-m\|_q}
		{b\|\max(Z,-r)\|_q}.
		\]
		Consequently,
		\[
		g(x)-m
		\ge
		\frac{r\|Y-m\|_q}
		{b\|\max(Z,-r)\|_q}\,x>0,
		\]
		which proves \eqref{eq:four-step-pointwise} for \(x>0\).
		
		After finish the proof of \eqref{eq:four-step-pointwise}, let $x=Z$, and we can easily deduce the following inequality by taking the \(q\)-th expectation on both sides:
		\begin{equation}\label{eq:q-th expectation}
			\|Y-m\|_q^q
			\ge
			\left(
			\frac{r\|Y-m\|_q}
			{b\|\max(Z,-r)\|_q}
			\right)^q
			\|\max(Z,-b)\|_q^q,
		\end{equation}
		and since $\|Y-m\|_q>0$, it is equivalent to
		\begin{equation}\label{eq:four-step-ratio}
			\frac{\|\max(Z,-b)\|_q}{b}
			\le
			\frac{\|\max(Z,-r)\|_q}{r}.
		\end{equation}
		
		\medskip
		\noindent
		\textbf{Step 4.
			\(t\mapsto\frac{\|\max(Z,-t)\|_q}{t}\) is strictly decreasing.}
		
		For every \(t>0\), splitting according to the sign of \(Z\)
		gives
		\begin{equation}\label{eq:four-step-monotonicity}
			\left(
			\frac{\|\max(Z,-t)\|_q}{t}
			\right)^q
			=
			\frac{\mathbb EZ_+^q}{t^q}
			+
			\mathbb E\!\left[
			\min\left\{\frac{(-Z)_+}{t},1\right\}^{q}
			\right].
		\end{equation}
		Since
		$0<\mathbb EZ_+^q<+\infty,$
		the first term on the right-hand side of
		\eqref{eq:four-step-monotonicity} is strictly decreasing in \(t\).
		The second term is non-increasing, because its integrand is
		non-increasing in \(t\) for every fixed value of \(Z\).
		Their sum is therefore strictly decreasing. Since \(q>0\), so
		\[
		t\longmapsto\frac{\|\max(Z,-t)\|_q}{t}
		\]
		is strictly increasing.
		
		Finally, applying this strict monotonicity to
		\eqref{eq:four-step-ratio} yields \(b\ge r\).
		Together with \eqref{eq:four-step-half-line}, we can deduce that
		\[
		\mathbb P\!\left(
		Y\le m-\frac{r\|Y-m\|_q}{\|\max(Z,-r)\|_q}
		\right)
		=
		T_\nu(-b)
		\le T_\nu(-r),
		\]
		which is exactly \eqref{prob-optimal-moment}.
		
		To the characterization of equality, suppose that
		\eqref{prob-optimal-moment} is an equality. Since \(T_\nu\) is
		strictly increasing, \eqref{eq:four-step-half-line} implies
		\(b=r\). The inequality \eqref{eq:q-th expectation} in Step 3 must also
		be an equality. This means that the integrands must be equal almost everywhere. Combined with the continuity of the integrands, this implies that the integrands are equal everywhere, that is to say,
		\[
		|g(z)-m|
		=
		\frac{\|Y-m\|_q}{\|\max(Z,-r)\|_q}
		\,|\max(z,-r)|,
		\qquad z\in\mathbb R.
		\]
		And the sign information established in Step 3 can also deduce that
		\[
		g(z)
		=
		m+
		\frac{\|Y-m\|_q}{\|\max(Z,-r)\|_q}
		\max(z,-r),
		\qquad z\in\mathbb R.
		\]
		Conversely, for every function
		\[
		g(z)=m+c\max(z,-r),
		\qquad c>0,
		\]
		Taking it into the left-hand side of \eqref{prob-optimal-moment}, we have
		\begin{equation*}
			\begin{split}
				\mathbb P\left(
				Y\le m-\frac{r\|Y-m\|_q}{\|\max(Z,-r)\|_q}
				\right)&=\mathbb P\left(
				m+c\max(Z,-r)\le m-\frac{rc\|\max(Z,-r)\|_q}{\|\max(Z,-r)\|_q}\right)\\
				&=\mathbb{P}\left(\max(Z,-r)\leq-r\right)\\
				&=\mathbb{P}(Z\leq-r)=T_\nu(-r).
			\end{split}
		\end{equation*}
		Thus the inequality \eqref{prob-optimal-moment} becomes equality. We finish the proof.
	\end{proof}
	
	We would like to show through the following property that the condition $q<\nu$ is necessary. This is because for any $q\geq\nu$, $E(Y-m)_+^q=\infty$.
	\begin{proposition}[The moment threshold]\label{moment-threshold-prop}
		If $f:\R^n\to\R$ is finite, convex and non-constant, then for every $m\in\R$ and every $q\ge\nu$,
		\[
		\mathbb E(f(X)-m)_+^q=\infty.
		\]
		In particular for Cauchy measure ($\nu=1$), it does not have a finite first moment or a finite variance.
	\end{proposition}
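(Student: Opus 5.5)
The plan is to reduce to a single direction via a supporting affine minorant and then compute a one-dimensional tail. Since $f$ is finite, convex and non-constant, there is a point $x_0$ and a subgradient $w\in\partial f(x_0)$ with $w\ne0$. If every subgradient at every point vanished, every point would be a global minimizer and $f$ would be constant. Then
\[
f(x)\ge f(x_0)+\ip{w}{x-x_0}=\beta+|w|\,\ip{x}{u},\qquad u=w/|w|,\quad \beta=f(x_0)-\ip{w}{x_0}.
\]
Because $s\mapsto(s-m)_+^q$ is non-decreasing, this gives
\[
\mathbb E(f(X)-m)_+^q\ge\mathbb E\bigl(\beta+|w|\ip{X}{u}-m\bigr)_+^q .
\]

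Next, rotation invariance of $\mu_{n,\nu}$ together with the marginal identity from the proof of \eqref{ehrhard} shows that $\ip{X}{u}$ has distribution $T_\nu$, with density $\varphi_\nu(t)=c_\nu(1+t^2/\nu)^{-(\nu+1)/2}$. The marginal identity is the beta-integral computation showing $\int_{u^\perp}\rho(tu+y)\,dy=T_\nu'(t)$. The right-hand side above therefore equals
\[
|w|^q\int_{\R}(t-s_0)_+^q\,\varphi_\nu(t)\dd t,\qquad s_0=\frac{m-\beta}{|w|}.
\]

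It remains to show that this integral diverges for $q\ge\nu$. For $t\ge\max\{2|s_0|,1\}$ we have $(t-s_0)\ge t/2$ and $\varphi_\nu(t)\ge c'\,t^{-(\nu+1)}$. The integrand is then at least $c''\,t^{q-\nu-1}\ge c''t^{-1}$, which is not integrable at $+\infty$. This proves $\mathbb E(f(X)-m)_+^q=\infty$.

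For the Cauchy case $\nu=1$, take $q=1$ and $q=2$. Since $\mathbb E|f(X)|\ge\mathbb E(f(X))_+=\infty$, there is no finite first moment. A finite variance would require a finite second moment, which in turn would give a finite first moment, so the variance is not finite either.

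The only step needing care is the existence of a nonzero subgradient, and it is handled by the global-minimizer remark above. Everything else is the tail comparison.
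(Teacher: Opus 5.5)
Your proposal is correct and takes essentially the same route as the paper. In both, a nonzero subgradient yields an affine minorant, rotation invariance reduces $\langle X,u\rangle$ to the one-dimensional law $T_\nu$, and the lower bound $\varphi_\nu(t)\ge c\,t^{-(\nu+1)}$ for $t\ge1$ forces $\int^\infty t^{q-\nu-1}\dd t=\infty$ when $q\ge\nu$; you also spell out two points the paper leaves implicit, namely why a nonzero subgradient exists and how the Cauchy first-moment and variance claims follow.
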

	
	\begin{proof}
		Since $f$ is finite convex and non-constant, there exists a point $x_0$ that admits a sub-gradient that is not zero $v$. The supporting affine function gives
		\[
		f(x)\ge f(x_0)+\ip{v}{x-x_0}=\ip{v}{x}+b,\quad \text{for some}\ b\in\mathbb{R}.
		\]
		Since Cauchy-type measure is rotational invariant, $\ip{v}{X}\stackrel{\mathrm d}=|v|Z$. Then for sufficiently large positive $z$, 
		\begin{equation*}
			(|v|z+b-m)_+\ge |v|z/2.
		\end{equation*}
		On the other hand, the Cauchy-type density is bounded below by $Cz^{-\nu-1}$ for some constant $C$. Therefore, for $q\geq\nu$,
		\[
		\mathbb E(f(X)-m)_+^q\ge
		\mathbb E(|v|Z+b-m)_+^q
		\ge C\int_{z_0}^\infty z^{q-\nu-1}\dd z=\infty.
		\]
	\end{proof}
	
	\section{The Gaussian limit}
	\label{sec-gaussian-limit}
	
	As we mentioned earlier in the introduction, the Cauchy-type measure $\mu_{n,\nu}$ tends exactly to the Gaussian measure $\gamma_n$ as $\nu\rightarrow+\infty$ in total variation distance. A natural question is whether Theorems \ref{prob-mean} and \ref{prob-moment} also hold for the Gaussian case. The purpose of this subsection is to extend Theorems \ref{prob-mean} and \ref{prob-moment} to the Gaussian measure, and our approach is by means of approximation; of course, we could directly repeat the proofs of Theorems \ref{prob-mean} and \ref{prob-moment} in the Gaussian setting, but we prefer to adopt the approximation approach in order to reflect the passage from the Cauchy-type measure to the Gaussian measure.
	
	\begin{corollary}[Gaussian limits of Theorems~\ref{prob-mean}
		and~\ref{prob-moment}]\label{gauss-exact}
		Let \(G_n\sim N(0,I_n)\), \(G\sim N(0,1)\), and \(Y=f(G_n)\),
		where \(f:\mathbb R^n\to\mathbb R\) is finite and convex.
		
		\smallskip
		\noindent
		\textit{(i) Mean--variance lower deviations.}
		If \(0<\operatorname{Var}Y<\infty\), then, for every
		\(a\in\mathbb R\),
		\begin{equation}\label{gauss-exact-eq}
			\mathbb P\!\left(
			Y\le \mathbb EY-
			\frac{\mathbb E(G-a)_+}
			{\sqrt{\operatorname{Var}((G-a)_+)}}\sqrt{\operatorname{Var}Y}
			\right)
			\le \Phi(a).
		\end{equation}
		Equality holds if and only if
		\[
		Y\overset{\mathrm d}{=}\beta+c(G-a)_+,
		\qquad \beta\in\mathbb R,\quad c>0.
		\]
		Moreover, fix \(y<\mathbb EY\) such that
		\(\mathbb P(Y\le y)>0\), and define
		\[
		a_y=\Phi^{-1}\bigl(\mathbb P(Y\le y)\bigr),
		\qquad
		Y_*
		=
		y+\frac{\mathbb EY-y}{\mathbb E(G-a_y)_+}(G-a_y)_+.
		\]
		Then
		\begin{equation}\label{gauss-W2}
			W_2^2\bigl(\mathcal L(Y),\mathcal L(Y_*)\bigr)
			\le
			\operatorname{Var}Y-
			\frac{(\mathbb EY-y)^2\operatorname{Var}((G-a_y)_+)}
			{\bigl(\mathbb E(G-a_y)_+\bigr)^2}.
		\end{equation}
		
		\smallskip
		\noindent
		\textit{(ii) Median--moment lower deviations.}
		Let \(q>0\), let \(m\) be the median of \(Y\), and suppose that
		\(0<\|Y-m\|_q<\infty\). Then, for every \(r>0\),
		\begin{equation}\label{gauss-moment-limit}
			\mathbb P\!\left(
			Y\le m-
			\frac{r\|Y-m\|_q}{\|\max(G,-r)\|_q}
			\right)
			\le \Phi(-r).
		\end{equation}
		Equality holds if and only if
		\[
		Y\overset{\mathrm d}{=}\beta+c\max(G,-r),
		\qquad \beta\in\mathbb R,\quad c>0.
		\]
	\end{corollary}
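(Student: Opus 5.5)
The plan is to first obtain a Gaussian analogue of Theorem~\ref{prob-representation} by passing to the limit $\nu\to\infty$. Then I will repeat the one-dimensional arguments of Theorems~\ref{prob-mean} and~\ref{prob-moment} with $Z$ replaced by $G$. Those arguments used only three ingredients. The first is the representation $Y\stackrel{\mathrm d}=g(Z)$ with $g$ non-decreasing and convex. The second is symmetry, continuity and strict monotonicity of the distribution function. The third is finiteness of the relevant moments of $Z$. All three hold for $G$.

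\textbf{Step 1: Gaussian representation.} By Corollary~\ref{gaussianlimit}, Ehrhard's inequality \eqref{gaussian} holds for convex sets. I will then rerun the proof of Theorem~\ref{prob-representation} verbatim with $T_\nu$ replaced by $\Phi$. The sublevel sets $K_t=\{f\le t\}$ satisfy $(1-\lambda)K_s+\lambda K_t\subset K_{(1-\lambda)s+\lambda t}$. Hence $t\mapsto\Phi^{-1}(\gamma_n(K_t))$ is concave, continuous and strictly increasing on $(\inf f,\infty)$. Its inverse, extended by the constant $\inf f$, gives a non-decreasing convex $g=F^{-1}\circ\Phi$ with $f(G_n)\stackrel{\mathrm d}=g(G)$. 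Conversely, $f(x)=g(x_1)$ realizes each such law. Continuity of $F$ and strict monotonicity come from the positivity of the Gaussian density and from the fact that boundaries of convex sets are null, exactly as before. Remark~\ref{g(a)=y} transfers with $a=\Phi^{-1}(\mathbb P(Y\le y))$.

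\textbf{Step 2: part (i).} I will redo the covariance argument of Theorem~\ref{prob-mean} with $G$ in place of $Z$. I tilt by $d\mu=(z-a)_+\,d\Phi/\mathbb E(G-a)_+$ and apply Chebyshev's association inequality to the non-decreasing functions $(g(z)-g(a))/(z-a)$ and $z-a$. This gives $\mathbb E((Y-y)(G-a)_+)\ge(\mathbb EY-y)\mathbb E(G-a)_+^2/\mathbb E(G-a)_+$. Expanding $\mathbb E|Y-Y_*|^2$ then yields \eqref{gauss-W2}. Next, $R(a)=\mathbb E(G-a)_+/\sqrt{\operatorname{Var}(G-a)_+}$ is strictly decreasing, by the same Cauchy--Schwarz computation of $R'$. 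Its range is $(0,\infty)$, because $G$ has all moments. Combining these facts with \eqref{gauss-W2} gives \eqref{gauss-exact-eq} and its equality cases, exactly as in the $T_\nu$ case. All integrability uses $\operatorname{Var}Y<\infty$ and Cauchy--Schwarz, so no Lipschitz assumption is needed.

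\textbf{Step 3: part (ii).} I will follow the four steps of Theorem~\ref{prob-moment}. The median is $m=g(0)$ and is unique, by symmetry and strict monotonicity of $\Phi$. The event $\{Y\le m-\cdots\}$ is $\{G\le-b\}$ with $b>0$. The chord and convexity comparison gives $|g(x)-m|\ge\frac{m-g(-b)}{b}|\max(x,-b)|$. By the splitting identity, $t\mapsto\|\max(G,-t)\|_q/t$ is strictly decreasing, and this forces $b\ge r$. Equality again forces $g(z)=m+c\max(z,-r)$. Here $q>0$ is arbitrary, since every moment of $G$ is finite.

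\textbf{Main obstacle.} The key delicate point is Step 1, namely whether the representation genuinely holds for every convex $f$ with the stated integrability, rather than only in a limiting sense. The authors advertise a passage via Lipschitz approximation and $\nu\to\infty$. My plan avoids that: the Gaussian Ehrhard inequality for convex sets is already available, and the representation argument needs nothing else. If a literal limit from Theorem~\ref{prob-mean} is preferred instead, I would first treat Lipschitz convex $f$. In that case $f(X_\nu)\to f(G_n)$ in $L^2$, via the coupling \eqref{mixture} with $\nu/W_\nu\to1$. I would then approximate a general convex $f$ by the increasing Lipschitz convex functions $f_k=\sup$ of finitely many supporting affine maps, and use monotone convergence. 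The equality characterization is not stable under limits, so it must in any case come from the direct argument above.
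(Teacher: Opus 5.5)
Your proposal is correct, but it follows a different route from the paper's main argument.

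\textbf{The paper's route.} The paper argues by approximation in two stages.
\begin{enumerate}
\item For convex, globally Lipschitz $f$, it uses the coupling $X_\nu=\sqrt{\nu/W_\nu}\,G_n$ and uniform integrability of $\sqrt{\nu/W_\nu}$ to get $L^p$ convergence of $f(X_\nu)$ and $Z_\nu$. It then lets $\nu\to\infty$ in the stability estimate \eqref{prob-W2} and in the fixed-threshold moment estimate from the proof of Theorem~\ref{prob-moment}. This needs convergence of the quantiles $T_\nu^{-1}(\mathbb P(Y_\nu\le y))$, of medians, of centered $q$-moments and of $W_2$ distances.
\item It removes the Lipschitz assumption with the approximants $f_L$ (suprema of affine minorants of slope at most $L$) and a second limit passage.
\end{enumerate}

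\textbf{Your route.} You establish $f(G_n)\stackrel{\mathrm d}=g(G)$ once, from the Gaussian Ehrhard inequality for convex sets, and then rerun the one-dimensional covariance, chord and monotonicity arguments with $G$ in place of $Z$. This is exactly the alternative the paper mentions at the start of Section~\ref{sec-gaussian-limit} and declines, in order to exhibit the Cauchy-to-Gaussian passage.

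\textbf{What each buys.} Your route is shorter. It needs no Lipschitz truncation and no convergence of medians, quantiles or Wasserstein distances. It covers every finite convex $f$ and every $q>0$ at once, and it gives both equality characterizations from the same computation. The paper's route shows that the Gaussian statements are genuine limits of the Cauchy-type ones. However, for the equality case of (ii) the paper falls back on precisely your Step~1, namely Corollary~\ref{gaussianlimit} plus the inverse-distribution construction.

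\textbf{A correction to your closing remark.} The equality case of (i) does not require the direct argument. The inequality \eqref{gauss-W2} survives the limit. Equality in \eqref{gauss-exact-eq} then forces $a_y=a$, so the right side of \eqref{gauss-W2} vanishes and $\mathcal L(Y)=\mathcal L(Y_*)$. That is how the paper concludes.

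\textbf{A small detail in your Step~2.} The association inequality under the tilted measure literally yields
\[
\mathbb E\bigl((Y-y)(G-a)_+\bigr)\ \ge\ \frac{\mathbb E(Y-y)_+\,\mathbb E(G-a)_+^2}{\mathbb E(G-a)_+}.
\]
You then need $\mathbb E(Y-y)_+\ge\mathbb EY-y$, together with positivity of the other factor, to reach the bound you state.
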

	
	\begin{proof}
		We first establish the assertions under the additional assumption
		that \(f\) is globally Lipschitz, and then remove this assumption
		by convex approximation.
		
		\medskip
		\noindent
		\textbf{Step 1. Globally Lipschitz convex functions.}
		
		Suppose that \(f\) is non-constant, convex, and globally Lipschitz.
		As introduced in the introduction,
		\[
		X_\nu=\sqrt{\frac{\nu}{W_\nu}} G_n\sim\mu_{n,\nu},
		\qquad
		Z_\nu=\sqrt{\frac{\nu}{W_\nu}} G\sim T_\nu,
		\]
		where \(W_\nu\sim\chi_\nu^2\) is independent of \(G_n\) and \(G\). 
		
		We shall use two convergence properties. First, for every fixed
		\(p>0\),
		\[
		\mathbb E\sqrt{\frac{\nu}{W_\nu}}^p
		=
		\left(\frac{\nu}{2}\right)^{p/2}
		\frac{\Gamma((\nu-p)/2)}{\Gamma(\nu/2)}
		\longrightarrow1,
		\qquad \nu\to\infty.
		\]
		Since \(\sqrt{\frac{\nu}{W_\nu}}\to1\) in probability and a moment of any fixed order
		larger than \(p\) is eventually uniformly bounded, uniform integrability gives
		\[
		\mathbb E\left|\sqrt{\frac{\nu}{W_\nu}}-1\right|^p\longrightarrow0.
		\]
		Let \(Y_\nu=f(X_\nu)\). Since $f$ is globally Lipschitz, let $L$ be the Lipschitz's constant. We deduce  
		\begin{equation}\label{gauss-limit:Lp}
			\mathbb E|Y_\nu-Y|^p\longrightarrow0,
			\qquad
			\mathbb E|Z_\nu-G|^p\longrightarrow0,
		\end{equation}
		because of
		\begin{equation*}
			\mathbb{E}|Y_\nu-Y|^p=\mathbb{E}|f(X_\nu)-f(G_n)|^p\leq L^p \mathbb{E}|X_\nu-G_n|^p\longrightarrow0.
		\end{equation*}
		Secondly, the proof of Corollary~\ref{gaussianlimit} gives
		$\mu_{n,\nu}\longrightarrow\gamma_n$
		in total variation. Thus, for every fixed \(y\in\mathbb R\),
		\begin{equation}\label{gauss-limit:fixed-threshold}
			\mathbb P(Y_\nu\le y)\longrightarrow\mathbb P(Y\le y).
		\end{equation}
		Accordingly, we first pass to the limit at a fixed \(y\).
		
		\medskip
		\noindent
		\textbf{Global Lipschitz convex functions for (i).}
		We first prove the stability \eqref{gauss-W2}. Fix \(y<\mathbb EY\) with \(\mathbb P(Y\le y)>0\), and set
		\[
		a_\nu
		=
		T_\nu^{-1}\bigl(\mathbb P(Y_\nu\le y)\bigr).
		\]
		By \eqref{gauss-limit:fixed-threshold}, convergence of the Cauchy-type distribution functions, and continuity and strict monotonicity
		of \(\Phi\),
		\[
		a_\nu\longrightarrow a_y.
		\]
		Consider the corresponding comparison variable in
		Theorem~\ref{prob-mean},
		\[
		Y_{*}^\nu
		=
		y+
		\frac{\mathbb EY_\nu-y}
		{\mathbb E(Z_\nu-a_\nu)_+}(Z_\nu-a_\nu)_+.
		\]
		The estimate
		\[
		\bigl|(Z_\nu-a_\nu)_+-(G-a_y)_+\bigr|
		\le |Z_\nu-G|+|a_\nu-a_y|
		\]
		and \eqref{gauss-limit:Lp} imply
		\[
		(Z_\nu-a_\nu)_+\longrightarrow(G-a_y)_+
		\quad\text{in }L^2.
		\]
		Their means and variances therefore converge, and the limiting
		mean is strictly positive. Together with
		\(\mathbb EY_\nu\to\mathbb EY\), this gives
		\[
		Y_{*}^\nu\longrightarrow Y_*
		\quad\text{in }L^2.
		\]
		Consequently,
		\[
		\mathcal L(Y_\nu)\longrightarrow\mathcal L(Y),
		\qquad
		\mathcal L(Y_{*}^\nu)\longrightarrow\mathcal L(Y_*)
		\quad\text{in }W_2.
		\]
		The triangle inequality for \(W_2\) also gives convergence of
		the distance between the two laws. Applying the stability estimate
		in Theorem~\ref{prob-mean} to \(Y_\nu\) and letting
		\(\nu\to\infty\), we obtain \eqref{gauss-W2}.
		
		We next prove \eqref{gauss-exact-eq}. The right-hand side of \eqref{gauss-W2} is
		nonnegative, so
		\begin{equation}\label{gauss-limit:fixed-variance}
			\frac{\mathbb EY-y}{\sqrt{\operatorname{Var}Y}}
			\le
			\frac{\mathbb E(G-a_y)_+}
			{\sqrt{\operatorname{Var}((G-a_y)_+)}}.
		\end{equation}
		Now choose \(y=\mathbb EY-
		\frac{\mathbb E(G-a)_+}
		{\sqrt{\operatorname{Var}((G-a)_+)}}\sqrt{\operatorname{Var}Y}\) . If \(\mathbb P(Y\le y)=0\), there is nothing to prove. If \(\mathbb P(Y\le y)>0\), \eqref{gauss-limit:fixed-variance} gives
		\[
		\frac{\mathbb E(G-a)_+}
		{\sqrt{\operatorname{Var}((G-a)_+)}}
		\le
		\frac{\mathbb E(G-a_y)_+}
		{\sqrt{\operatorname{Var}((G-a_y)_+)}}.
		\]
		Since the strict decrease is true, which implies \(a_y\le a\). Hence
		\[
		\mathbb P(Y\le y)=\Phi(a_y)\le\Phi(a),
		\]
		which proves \eqref{gauss-exact-eq}.
		
		\medskip
		\noindent
		\textbf{Global Lipschitz convex functions for (ii).}
		Fix \(q>0\), and let \(m_\nu\) and \(m\) be the medians of
		\(Y_\nu\) and \(Y\), respectively. We first show that
		\begin{equation}\label{gauss-limit:medians}
			m_\nu\longrightarrow m.
		\end{equation}
		A finite non-constant convex function is continuous and has an
		interval as its range. Under an everywhere positive density,
		every nonempty open subinterval of this range has positive
		probability. Its distribution therefore has a unique median.
		In particular, for every \(\varepsilon>0\),
		\[
		\mathbb P(Y\le m-\varepsilon)<\frac12,
		\qquad
		\mathbb P(Y\ge m+\varepsilon)<\frac12.
		\]
		Weak convergence and the Portmanteau theorem imply that these
		strict inequalities also hold with \(Y_\nu\) in place of \(Y\)
		for all sufficiently large \(\nu\). Thus
		\[
		m-\varepsilon<m_\nu<m+\varepsilon,
		\]
		proving \eqref{gauss-limit:medians}.
		Combining this, \eqref{gauss-limit:Lp} and Minkowski's inequality (since $\|\cdot\|_q$ is already a norm when $q\geq1$) we obtain
		\begin{equation*}
			|\|Y_\nu-m_\nu\|_q-\|Y-m\|_q|\leq \|Y_\nu-Y\|_q+\|m_\nu-m\|_q\longrightarrow0,
		\end{equation*}
		for \(q\ge1\). And we use
		\[
		\bigl||u|^q-|v|^q\bigr|\le |u-v|^q,
		\]
		when $0<q<1$ to get the same thing.
		Thus for all $q>0$,
		\begin{equation}\label{gauss-limit:centered-moments}
			\|Y_\nu-m_\nu\|_q\longrightarrow\|Y-m\|_q.
		\end{equation}
		
		Fix \(y<m\) with \(\mathbb P(Y\le y)>0\), and put
		\[
		b_\nu
		=
		-T_\nu^{-1}\bigl(\mathbb P(Y_\nu\le y)\bigr),
		\qquad
		b
		=
		-\Phi^{-1}\bigl(\mathbb P(Y\le y)\bigr)>0.
		\]
		By \eqref{gauss-limit:fixed-threshold}, \(b_\nu\to b\).
		For sufficiently large \(\nu\), we have \(q<\nu\) and
		\(y<m_\nu\). The proof of
		Theorem~\ref{prob-moment} gives
		\[
		\frac{\|\max(Z_\nu,-b_\nu)\|_q}{b_\nu}
		\le
		\frac{\|Y_\nu-m_\nu\|_q}{m_\nu-y}.
		\]
		Since
		\[
		|\max(Z_\nu,-b_\nu)-\max(G,-b)|
		\le |Z_\nu-G|+|b_\nu-b|,
		\]
		we may use \eqref{gauss-limit:Lp}--%
		\eqref{gauss-limit:centered-moments} to pass to the limit.
		This yields
		\begin{equation}\label{gauss-limit:fixed-moment}
			\frac{\|\max(G,-b)\|_q}{b}
			\le
			\frac{\|Y-m\|_q}{m-y}.
		\end{equation}
		Choose \(y\) to be the threshold in
		\eqref{gauss-moment-limit}, that is $y=m-
		\frac{r\|Y-m\|_q}{\|\max(G,-r)\|_q}$. If the corresponding probability
		is zero, there is nothing to prove. Otherwise,
		\eqref{gauss-limit:fixed-moment} becomes
		\[
		\frac{\|\max(G,-b)\|_q}{b}
		\le
		\frac{\|\max(G,-r)\|_q}{r}.
		\]
		The strict decrease proved in Step~4 of the proof of
		Theorem~\ref{prob-moment} applies equally to the Gaussian
		reference variable. It follows that \(b\ge r\), and hence
		\[
		\mathbb P(Y\le y)=\Phi(-b)\le\Phi(-r).
		\]
		This proves \eqref{gauss-moment-limit}.
		
		\medskip
		\noindent
		\textbf{Step 2. General finite convex functions.}
		
		We now assume only that \(f\) is finite, nonconstant, and convex,
		with the Gaussian integrability required in the relevant assertion.
		
		Choose a supporting affine function \(\ell_0\) of \(f\) with
		nonzero slope. For \(L\ge|\nabla\ell_0|\), define
		\[
		f_L(x)
		=
		\sup\left\{
		\ell(x):
		\ell\text{ is affine},\
		\ell\le f,\
		|\nabla\ell|\le L
		\right\}.
		\]
		Then \(f_L\) is finite, convex, non-constant, and globally
		\(L\)-Lipschitz. Moreover,
		\begin{equation}\label{gauss-limit:convex-approximation}
			\ell_0\le f_L\le f,
			\qquad
			f_L\longrightarrow f\quad\text{as}\ L\rightarrow\infty.
		\end{equation}
		To verify the last assertion, fix \(x\) and choose a supporting
		affine function of \(f\) at \(x\). Once \(L\) exceeds the magnitude
		of its slope, this function is included in the supremum, and
		therefore \(f_L(x)=f(x)\).
		
		Write \(Y_L=f_L(G_n)\). For every fixed \(y\),
		\begin{equation}\label{gauss-limit:monotone-threshold}
			\{Y_L\le y\}\longrightarrow\{Y\le y\},
			\qquad
			\mathbb P(Y_L\le y)\longrightarrow\mathbb P(Y\le y),\quad \text{as}\ L\rightarrow\infty.
		\end{equation}
		Since an affine Gaussian variable has moments of every positive
		order, Dominated Convergence Theorem shows that, whenever
		\(\mathbb E|Y|^p<\infty\),
		\begin{equation}\label{gauss-limit:approximation-Lp}
			\mathbb E|Y_L-Y|^p\longrightarrow0.
		\end{equation}
		
		\medskip
		\noindent
		\textbf{Finite convex function for (i).}
		Suppose that \(0<\operatorname{Var}Y<\infty\).
		By \eqref{gauss-limit:approximation-Lp},
		\[
		Y_L\longrightarrow Y
		\quad\text{in }L^2,
		\]
		so the means and variances converge.
		
		Fix \(y<\mathbb EY\) with \(\mathbb P(Y\le y)>0\), and set
		\[
		a_L=\Phi^{-1}\bigl(\mathbb P(Y_L\le y)\bigr).
		\]
		By \eqref{gauss-limit:monotone-threshold}, \(a_L\to a_y\).
		The corresponding comparison variables
		\[
		Y_{L,*}
		=
		y+
		\frac{\mathbb EY_L-y}{\mathbb E(G-a_L)_+}(G-a_L)_+
		\]
		also converge to \(Y_*\) in \(L^2\).
		
		Each \(f_L\) belongs to the class treated in Part~1.
		For sufficiently large \(L\), we may therefore apply
		\eqref{gauss-W2} to \(Y_L\). Letting \(L\to\infty\), the same
		Wasserstein convergence argument as in Step~1.1 establishes
		\eqref{gauss-W2} for \(Y\). Equation
		\eqref{gauss-limit:fixed-variance} and strict monotonicity of the
		reference moment ratio then give \eqref{gauss-exact-eq}.
		
		For equality, let \(y\) be the threshold in
		\eqref{gauss-exact-eq}. If equality holds there, then \(a_y=a\).
		The right-hand side of \eqref{gauss-W2} is consequently zero,
		which forces
		\[
		\mathcal L(Y)=\mathcal L(Y_*).
		\]
		Thus
		\[
		Y\overset{\mathrm d}{=}\beta+c(G-a)_+,
		\qquad c>0.
		\]
		Conversely, for this distribution, the threshold in
		\eqref{gauss-exact-eq} is exactly \(\beta\), and the lower-tail
		probability is \(\Phi(a)\). Hence equality holds.
		
		\medskip
		\noindent
		\textbf{Finite convex functions for (ii).}
		Fix \(q>0\), suppose that \(0<\|Y-m\|_q<\infty\), and let
		\(m_L\) be the median of \(Y_L\). By
		\eqref{gauss-limit:approximation-Lp} and the median convergence
		argument in Step~1.2,
		\begin{equation}\label{gauss-limit:approximation-medians}
			m_L\longrightarrow m,
			\qquad
			\|Y_L-m_L\|_q\longrightarrow\|Y-m\|_q.
		\end{equation}
		
		Fix \(y<m\) with \(\mathbb P(Y\le y)>0\), and define
		\[
		b_L
		=
		-\Phi^{-1}\bigl(\mathbb P(Y_L\le y)\bigr),
		\qquad
		b
		=
		-\Phi^{-1}\bigl(\mathbb P(Y\le y)\bigr).
		\]
		By \eqref{gauss-limit:monotone-threshold}, \(b_L\to b>0\).
		For sufficiently large \(L\), the fixed-threshold estimate
		from Step~1.2 applies to \(Y_L\), giving
		\[
		\frac{\|\max(G,-b_L)\|_q}{b_L}
		\le
		\frac{\|Y_L-m_L\|_q}{m_L-y}.
		\]
		Using \eqref{gauss-limit:approximation-medians} and letting
		\(L\to\infty\), we obtain
		\eqref{gauss-limit:fixed-moment} for the general convex function.
		Choosing \(y\) to be the threshold in
		\eqref{gauss-moment-limit} and repeating the monotonicity
		argument in Step~1.2 proves \eqref{gauss-moment-limit}.
		
		It remains to characterize equality. The Gaussian Ehrhard
		inequality already obtained in Corollary~\ref{gaussianlimit},
		together with the inverse-distribution construction used in
		Theorem~\ref{prob-representation}, gives
		\[
		Y\overset{\mathrm d}{=}g(G),
		\]
		where \(g\) is nondecreasing and convex, and \(m=g(0)\).
		If equality holds in \eqref{gauss-moment-limit}, the corresponding
		sublevel half-line has endpoint \(-r\). Hence
		\[
		g(-r)
		=
		m-\frac{r\|Y-m\|_q}{\|\max(G,-r)\|_q}.
		\]
		Applying the chord comparison from the proof of
		Theorem~\ref{prob-moment} gives
		\begin{equation}\label{gauss-limit:equality-envelope}
			|g(x)-m|
			\ge
			\frac{\|Y-m\|_q}{\|\max(G,-r)\|_q}
			|\max(x,-r)|,
			\qquad x\in\mathbb R.
		\end{equation}
		The \(q\)-th powers of the two sides have equal Gaussian
		integrals. Their difference is continuous and nonnegative;
		since the Gaussian density is everywhere positive, this
		difference vanishes identically. Using the sign information
		in the chord comparison, we conclude that
		\[
		g(x)
		=
		m+
		\frac{\|Y-m\|_q}{\|\max(G,-r)\|_q}
		\max(x,-r),
		\qquad x\in\mathbb R.
		\]
		This proves necessity.
		
		Conversely, if
		\[
		Y\overset{\mathrm d}{=}\beta+c\max(G,-r),
		\qquad c>0,
		\]
		then its median is \(\beta\), its centered \(q\)-moment is
		\(c\|\max(G,-r)\|_q\), and the event in
		\eqref{gauss-moment-limit} is exactly \(\{G\le-r\}\).
		Thus equality holds.
	\end{proof}
	
	\section{Comparison with the Gaussian bound and the upper-tail obstruction}\label{sec-comparison}
	
	We prove in the following proposition that, \eqref{gauss-exact-eq} is stronger than \eqref{pv-background} and \eqref{intro-valettas-variance} when $t>1$, and \eqref{gauss-exact-eq} is stronger than \eqref{intro-chen} when $t>0$. In fact, \eqref{gauss-exact-eq} have already attained the optimal lower-tail bound as shown before.
	
	\begin{proposition}[Comparison with the explicit variance bound]\label{gauss-comparison}
		For $t>1$, taking $a_t$ the unique solution of
		\begin{equation*}
			\frac{\mathbb E(G-a_t)_+}{\sqrt{\operatorname{Var}(G-a_t)_+}}=t,
		\end{equation*}
		and the existence and uniqueness is from the strict monotonicity of $R_G$ in Corollary \ref{gauss-exact}. Then
		\begin{equation}
			\label{eq:comparison-valettas}
			\Phi(a_t)<\Phi\!\left(-\frac{t-1}{\sqrt{2\pi}}\right)<\frac{1}{2}e^{-\frac{\pi(t-1)^2}{1024}},
			\qquad t>1,
		\end{equation}
		and
		\begin{equation}
			\label{eq:comparison-chen}
			\Phi(a_t)<e^{-t^2/2},
			\qquad t>0.
		\end{equation}
	\end{proposition}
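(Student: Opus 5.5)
The plan is to prove the two Valettas-type inequalities in \eqref{eq:comparison-valettas} by a direct analytic comparison of thresholds, and to prove \eqref{eq:comparison-chen} by combining the pointwise optimality of Corollary~\ref{gauss-exact} with a perturbation argument for strictness. Write $R_G(a)=\mathbb E(G-a)_+/\sqrt{\operatorname{Var}((G-a)_+)}$ and $\sigma(a)^2=\operatorname{Var}((G-a)_+)$. By the Claim in the proof of Theorem~\ref{prob-mean}, now with $G$ in place of $Z$, $R_G$ is a strictly decreasing bijection onto $(0,\infty)$. Since $\Phi$ is strictly increasing, the first inequality $\Phi(a_t)<\Phi(-(t-1)/\sqrt{2\pi})$ is equivalent to $t-1<-\sqrt{2\pi}\,a_t$. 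This in turn is equivalent to
\[
R_G(a)<1-\sqrt{2\pi}\,a\qquad\text{at } a=a_t .
\]
A direct computation gives $R_G(0)=\tfrac{1}{\sqrt{2\pi}}\big/\sqrt{\tfrac12-\tfrac1{2\pi}}<1$. Since $R_G$ is decreasing, $t>1$ therefore forces $a_t<0$, so it suffices to prove the displayed inequality for all $a<0$.

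For $a<0$ I would bound numerator and denominator separately. For the numerator, write $(G-a)_+=(G-a)+(a-G)_+$, so that $\mathbb E(G-a)_+=-a+\mathbb E(a-G)_+\le -a+\mathbb E(-G)_+=-a+1/\sqrt{2\pi}$. For the denominator, use the derivative formulas from the proof of Theorem~\ref{prob-mean}:
\[
\frac{d}{da}\sigma(a)^2=-2\,\mathbb P(G\le a)\,\mathbb E(G-a)_+<0 .
\]
Hence $\sigma$ is decreasing in $a$, and $\sigma(a)\ge\sigma(0)=\sqrt{\tfrac12-\tfrac1{2\pi}}>\tfrac1{\sqrt{2\pi}}$ for $a\le 0$. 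Combining the two bounds gives
\[
R_G(a)<\sqrt{2\pi}\Bigl(-a+\tfrac1{\sqrt{2\pi}}\Bigr)=1-\sqrt{2\pi}\,a ,
\]
which is the desired inequality, with strictness coming from the strict bound on $\sigma$. The second inequality in \eqref{eq:comparison-valettas} is elementary. With $x=(t-1)/\sqrt{2\pi}>0$, the strict Gaussian tail bound $\Phi(-x)<\tfrac12e^{-x^2/2}$ applies. Moreover $x^2/2=(t-1)^2/(4\pi)>\pi(t-1)^2/1024$, because $4\pi^2<1024$.

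For \eqref{eq:comparison-chen} I would use optimality rather than a formula. The function $f(x)=(x_1-a_t)_+$ is convex and $1$-Lipschitz, and $Y=f(G_n)$ has $0<\operatorname{Var}Y<\infty$. By the equality case of Corollary~\ref{gauss-exact}(i), $\mathbb P(Y\le\mathbb EY-t\sqrt{\operatorname{Var}Y})=\Phi(a_t)$. Chen's bound \eqref{intro-chen} applied to this same $Y$ then gives $\Phi(a_t)\le e^{-t^2/2}$.

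The main obstacle is upgrading this to a strict inequality, since Chen's estimate is not known to be non-attained. My first attempt would be a perturbation inside the convex class. Take $Y_\delta=\max(G-a,\delta(G-a))$ with $\delta>0$ small, which is convex, and a threshold $y_\eta=-\eta$ with $\eta$ small. Then $\mathbb P(Y_\delta\le y_\eta)=\Phi(a-\eta/\delta)$, and the normalized deviation is
\[
t(\delta,\eta)=\frac{\mathbb EY_\delta+\eta}{\sqrt{\operatorname{Var}Y_\delta}} .
\]
Choosing $\eta$ so that $t(\delta,\eta)=t$, Chen's bound gives $\Phi(a-\eta/\delta)\le e^{-t^2/2}$. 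The goal is to show that $a$ can be adjusted so that this probability strictly exceeds $\Phi(a_t)$; that would make $\Phi(a_t)$ strictly smaller than Chen's bound. I expect this first-order expansion in $(\delta,\eta)$ to be the delicate step. If it fails, the fallback is a direct analytic proof of $\Phi(a)<e^{-R_G(a)^2/2}$. For $a\ge0$ I would use the bound $R_G(a)^2\le\mathbb P(G>a)/\mathbb P(G\le a)$ from the proof of Theorem~\ref{prob-mean}. For $a<0$ I would use the numerator and denominator estimates above together with a Mills-ratio bound on $\Phi(a)$.
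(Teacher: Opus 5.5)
Your proof of \eqref{eq:comparison-valettas} is correct and is essentially the paper's argument. Both use $a_t<0$ from $R_G(0)=1/\sqrt{\pi-1}<1$, the bound $\mathbb E(G-a)_+\le -a+1/\sqrt{2\pi}$, and the monotonicity of $\operatorname{Var}((G-a)_+)$ in $a$. The paper compares with the constant $2$ and passes through $\Phi(-(t-1)/2)$; you compare directly with $\sqrt{2\pi}$, which works equally well.

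The gap is the strictness in \eqref{eq:comparison-chen}. Applying \eqref{intro-chen} to the extremal observable $(x_1-a_t)_+$ gives only $\Phi(a_t)\le e^{-t^2/2}$, and neither of your routes to a strict inequality can succeed.

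\emph{The perturbation plan.} It is ruled out by the very optimality you invoke. The function $Y_\delta=\max(G-a,\delta(G-a))$ is a convex Lipschitz observable with $0<\operatorname{Var}Y_\delta<\infty$. Once $\eta$ is chosen with $t(\delta,\eta)=t$, you have $y_\eta=\mathbb EY_\delta-t\sqrt{\operatorname{Var}Y_\delta}$. Corollary~\ref{gauss-exact}(i) then forces $\Phi(a-\eta/\delta)=\mathbb P(Y_\delta\le y_\eta)\le\Phi(a_t)$, so no admissible perturbation can strictly exceed $\Phi(a_t)$. More generally, $\Phi(a_t)$ is the attained supremum of this lower-tail probability over the whole convex class. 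Hence \eqref{intro-chen}, used as a black box, carries no information beyond $\Phi(a_t)\le e^{-t^2/2}$, and no choice of test observable can produce strictness.

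\emph{The fallback.} It works for $a\ge0$: with $p=\Phi(a)\ge\frac12$ one has $-\ln p>1-p\ge(1-p)/(2p)\ge R_G(a)^2/2$. For $a<0$, however, the only estimate you have, $R_G(a)<1-\sqrt{2\pi}\,a$, is far too weak. It would require $\Phi(a)\le e^{-(1-\sqrt{2\pi}a)^2/2}$, which already fails at $a=-1$ ($0.158\ldots$ against $0.002\ldots$). Asymptotically it pits $e^{-\pi a^2}$ against $\Phi(a)\approx e^{-a^2/2}/(\sqrt{2\pi}|a|)$. In fact $R_G(a)>-a$ for $a<0$, so the target is sharper than the standard Gaussian tail bound. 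A direct proof would need an upper bound on $R_G(a)^2$ within $O(\log|a|)$ of $a^2$.

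The missing idea is to use Chen's moment generating function estimate rather than his tail bound. For $H=(G-a)_+$ and $\lambda\le0$, \cite[Theorem~2(1)]{Chen26} gives $\mathbb Ee^{\lambda(H-\mathbb EH)}\le e^{\lambda^2\operatorname{Var}(H)/2}$. Take $\lambda=-\mathbb EH/\operatorname{Var}(H)$. The Chernoff step is then strict, because $H\ge0$ and $\mathbb P(H>0)>0$:
\[
\Phi(a)=\mathbb P(H=0)=\mathbb E\bigl[e^{\lambda H}\mathbf 1_{\{H=0\}}\bigr]<\mathbb Ee^{\lambda H}\le e^{\lambda\mathbb EH+\lambda^2\operatorname{Var}(H)/2}=e^{-(\mathbb EH)^2/(2\operatorname{Var}(H))}=e^{-R_G(a)^2/2}.
\]
Taking $a=a_t$ gives \eqref{eq:comparison-chen}; this is exactly how the paper proceeds.
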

	
	\begin{proof}
		We first prove \eqref{eq:comparison-valettas}. Since $R_G(0)=\frac{1}{\sqrt{\pi-1}}<1$, the equation $R_G(a)=t>1$ shows that $a<0$. Since
		\begin{equation*}
			(G-a)_+\leq G_+-a,
		\end{equation*}
		by taking expectations on both sides, we have
		\begin{equation}
			\label{comparison on expectation}
			\mathbb{E}(G-a)_+\leq\frac{1}{\sqrt{2\pi}}-a<\frac{1}{2}-a.
		\end{equation}
		After using the Tail-integral formula (Please see a similar calculation in \textbf{Claim} in the proof of Theorem \ref{prob-mean}), 
		\begin{equation*}
			\begin{split}
				\frac{d}{da}\operatorname{Var}(G-a)_+&=\frac{d}{da}\left(\mathbb{E}(G-a)_+^2-(\mathbb{E}(G-a)_+)^2\right)\\
				&=-2\mathbb{E}(G-a)_++2\mathbb{E}(G-a)_+\mathbb{P}(G>a)\\
				&=-2\mathbb{E}(G-a)_+\Phi(a)<0.
			\end{split}
		\end{equation*}
		Consequently, 
		\begin{equation}\label{comparison on var}
			\operatorname{Var}(G-a)_+\geq\operatorname{Var}G_+=\frac{1}{2}-\frac{1}{2\pi}>\frac{1}{4}.
		\end{equation}
		Combining \eqref{comparison on expectation} and \eqref{comparison on var}, it deduces that
		\begin{equation*}
			t=\frac{\mathbb E(G-a_t)_+}{\sqrt{\operatorname{Var}(G-a_t)_+}}<\frac{\frac{1}{2}-a}{\sqrt{\frac{1}{4}}}=1-2a.
		\end{equation*}
		For any $x\ge0$, we have
		\[
		\Phi(-x)=\frac{1}{\sqrt{2\pi}}\int_{-\infty}^{-x} e^{-\frac{t^2}{2}}dt=\frac{e^{-\frac{x^2}{2}}}{\sqrt{2\pi}}
		\int_0^\infty e^{-\frac{u^2}{2}-xu}\dd u\le\frac12e^{-\frac{x^2}{2}},
		\]
		by the change of variables $u=-t-x$. Thus,
		\begin{equation*}
			\begin{split}
				\Phi(a)&<\Phi\left(-\frac{t-1}{2}\right)\\
				&<\Phi\left(-\frac{t-1}{\sqrt{2\pi}}\right)\\
				&\leq\frac{1}{2}e^{-\frac{(t-1)^2}{4\pi}}<\frac{1}{2}e^{-\frac{\pi(t-1)^2}{1024}},
			\end{split}
		\end{equation*}
		and the final inequality is because $\frac{1}{4\pi}\geq\frac{\pi}{1024}$.
		
		We next prove \eqref{eq:comparison-chen}. Fix $t>0$ and set $H=(G-a)_+$.
		Chen's negative-parameter moment generating function estimate
		\cite[Theorem~2(1)]{Chen26} gives
		\[
		\mathbb E e^{\lambda(H-\mathbb E H)}
		\le
		e^{\frac{\lambda^2}{2}\operatorname{Var}(H)},
		\qquad \lambda\le0.
		\]
		Choose
		\[
		\lambda=-\frac{\mathbb E H}{\operatorname{Var}(H)}<0.
		\]
		Since $H\ge0$ and $\mathbb P(H>0)>0$, we have
		\[
		\mathbb P(H=0)=\mathbb{E}[e^{\lambda H}1_{\{H=0\}}]<\mathbb{E}[e^{\lambda H}1_{\{H=0\}}]+\mathbb{E}[e^{\lambda H}1_{\{H>0\}}]=\mathbb E e^{\lambda H}.
		\]
		Therefore,
		\begin{equation*}
			\begin{split}
				\Phi(a)
				&=\mathbb P(H=0)\\
				&<\mathbb E e^{\lambda H}=\mathbb{E}e^{\lambda(H-\mathbb{E}H)}\cdot e^{\lambda\mathbb{E}H}\\
				&\le
				e^{\frac{\lambda^2}{2}\operatorname{Var}(H)+\lambda\mathbb E H}\\
				&=
				e^{-\frac{(\mathbb E H)^2}{2\operatorname{Var}H}}
				=e^{-\frac{t^2}{2}}. 
			\end{split}
		\end{equation*}
		This proves \eqref{eq:comparison-chen}.
	\end{proof}
	
	The comparison concerns the explicit mean--variance estimate quoted above. Valettas \cite[Theorem~2.13]{Valettas17} obtained a sharp constant for a different normalization, involving the median and a positive-part first moment. Corollary~\ref{gauss-exact} determines the whole mean--variance curve.
	
	\begin{proposition}[Upper deviations cannot have a uniform faster order]\label{upper-obstruction}
		There is no function $B(t)=o(t^{-2})$ as $t\to\infty$ such that
		\[
		\mathbb P(Y\ge\mathbb EY+t\sqrt{\operatorname{Var}Y})\le B(t)
		\]
		for every non-constant convex Gaussian observable $Y$ of finite variance. The obstruction already occurs among one-dimensional positive-part affine functions.
	\end{proposition}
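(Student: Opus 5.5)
The plan is to argue by contradiction using the explicit family $Y_a=(G-a)_+$ with $G\sim N(0,1)$, which by Theorem~\ref{prob-representation} (or directly, via $f(x)=(x_1-a)_+$) are non-constant convex Gaussian observables of finite variance. Suppose $B(t)=o(t^{-2})$ bounds the upper deviations uniformly. For each large $t$ I will select a parameter $a=a(t)\to+\infty$ so that the event $\{Y_a\ge \mathbb EY_a+t\sqrt{\operatorname{Var}Y_a}\}$ has probability comparable to $t^{-2}$. That contradicts $B(t)=o(t^{-2})$.

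The key heuristic is that for large $a$ the variable $Y_a$ is mostly $0$, with a rare positive part. Write $p=\mathbb P(G>a)=1-\Phi(a)$. The conditional overshoot $G-a$ given $G>a$ is of size $\sim 1/a$, and in fact $a(G-a)\mid G>a$ converges in law to $\mathrm{Exp}(1)$ by the Mills ratio. Hence:
\[
\mathbb EY_a\approx \frac{p}{a},\qquad \mathbb EY_a^2\approx \frac{2p}{a^2},\qquad \operatorname{Var}Y_a\approx\frac{2p}{a^2}.
\]
Now choose $s>0$ fixed, say $s=1$, and aim for the threshold level $\mathbb EY_a+t\sqrt{\operatorname{Var}Y_a}\approx s/a$. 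This requires $t\approx (s/a)\big/\sqrt{2p}/a=s/\sqrt{2p}$, that is, $p\approx s^2/(2t^2)$. Since $a\mapsto p$ is continuous and decreasing onto $(0,1/2)$, for every large $t$ such an $a$ exists. The probability of the event is then
\[
\mathbb P\bigl(Y_a\ge s/a\bigr)=p\,\mathbb P\bigl(a(G-a)\ge s\mid G>a\bigr)\approx p\,e^{-s}\approx \frac{s^2e^{-s}}{2t^2}.
\]
This gives the desired lower bound $c\,t^{-2}$. To be cleaner I will define $a(t)$ exactly by the equation $\mathbb EY_a+t\sqrt{\operatorname{Var}Y_a}=1/a$, or equivalently $t=R^{+}(a):=(1/a-\mathbb EY_a)/\sqrt{\operatorname{Var}Y_a}$. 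Continuity of $R^+$ and $R^+(a)\to\infty$ as $a\to\infty$, which follows from the asymptotics, then give solvability for all large $t$.

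The main obstacle is making the asymptotics rigorous. I will first establish $\mathbb E[(G-a)_+^k]=\int_a^\infty (x-a)^k\phi(x)\,dx$, and via the substitution $x=a+u/a$ show that this equals $\phi(a)a^{-k-1}(k!+o(1))$. I will do the same for $\mathbb P(G\ge a+s/a)=\phi(a)a^{-1}(e^{-s}+o(1))$, using dominated convergence on $e^{-u-u^2/(2a^2)}$. From these I get $\operatorname{Var}Y_a\sim 2\phi(a)/a^3$ and $\mathbb EY_a=o(1/a)$, so $t\sim (2\phi(a)/a)^{-1/2}$. The probability of the event is then $\sim e^{-1}\phi(a)/a\sim 2e^{-1}\cdot t^{-2}/2=e^{-1}t^{-2}$. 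Therefore $\limsup t^2B(t)\ge e^{-1}>0$, contradicting $B(t)=o(t^{-2})$. Since all the witnesses are of the form $(x_1-a)_+$, this also confirms that the obstruction already occurs among one-dimensional positive-part affine functions.
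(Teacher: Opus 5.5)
Your proposal is correct and is essentially the paper's argument. Both use the family $Y_a=(G-a)_+$, the substitution $x=a+u/a$ with dominated convergence, the threshold $1/a$, and $t_a=(1/a-\mathbb EY_a)/\sqrt{\operatorname{Var}Y_a}$, and the limit is $t_a^2\,\mathbb P(Y_a\ge 1/a)\to (2e)^{-1}$.

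Two small corrections. First, your final constant should be $(2e)^{-1}$, not $e^{-1}$: since $\phi(a)/a\sim t^{-2}/2$, you get $e^{-1}\phi(a)/a\sim t^{-2}/(2e)$, which your own heuristic $s^2e^{-s}/(2t^2)$ with $s=1$ already shows. This does not affect the contradiction. Second, you need not solve $t=R^+(a)$ for every large $t$; as the paper does, the sequence $t_a\to\infty$ is enough. This also avoids having to make sure you pick the large root, since $R^+(a)\to\infty$ as $a\downarrow0$ as well, where your asymptotics do not apply.
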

	
	\begin{proof}
		For $a>0$, let $Y_a=(G-a)_+$. Write $\phi$ for the standard Gaussian density. Substitution $G=a+\frac{u}{a}$ gives, for $j=1,2$,
		\begin{equation*}
			\begin{split}
				\mathbb EY_a^j&=\int_a^{+\infty}(x-a)^jd\gamma_1(x)=\frac{\phi(a)}{a^{j+1}}
				\int_0^\infty u^je^{-u-\frac{u^2}{2a^2}}\dd u.
			\end{split}
		\end{equation*}
		Let $a\rightarrow+\infty$. By Dominated Convergence theorem with the dominator $u^je^{-u}$, it deduces that
		\[
		\mathbb{E}Y_a\sim\frac{\phi(a)}{a^2},\qquad
		\operatorname{Var}Y_a\sim\frac{2\phi(a)}{a^3}\qquad a\rightarrow+\infty,
		\]
		here $\sim$ means equivalent infinitesimals. The same substitution gives
		\[
		\mathbb P\left(Y_a\ge\frac{1}{a}\right)=\frac{\phi(a)}a
		\int_1^\infty e^{-u-\frac{u^2}{2a^2}}\dd u
		\sim\frac{e^{-1}\phi(a)}a.
		\]
		Let 
		$$t_a=\frac{a^{-1}-\mathbb{E}Y_a}{\sqrt{\operatorname{Var}Y_a}}\rightarrow+\infty\qquad\text{as}\quad a\rightarrow+\infty.$$ 
		Thus,
		\begin{equation*}
			\begin{split}
				t_a^2\mathbb P(Y_a\ge m_a+t_a\sqrt{v_a})&=t_a^2\mathbb{P}\left(Y_a\geq\frac{1}{a}\right)\\
				&=\frac{(1-a\mathbb{E}Y_a)^2}{a^2\operatorname{Var}Y_a}\mathbb{P}\left(G\geq a+\frac{1}{a}\right)\\ 
				&\longrightarrow\frac{a}{2\phi(a)}\cdot\frac{\phi(a)}{ea}=\frac1{2e}>0.
			\end{split}
		\end{equation*}
		This contradicts any proposed uniform bound $B(t)=o(t^{-2})$.
	\end{proof}
	Conversely, Cantelli's inequality gives a uniform upper bound of order $t^{-2}$. Indeed, the Cantelli's inequality
	\[
	\mathbb{P}\left(Y\geq\mathbb{E}Y+t\sqrt{\operatorname{Var}Y}\right)\leq\frac{1}{1+t^2}.
	\]
	Together with Proposition~\ref{upper-obstruction}, this identifies the best possible uniform decay power (but without asserting optimality of Cantelli's constant within the convex Gaussian class).

\end{document}